\documentclass{article}


\PassOptionsToPackage{sort,round}{natbib}

\usepackage[final]{neurips_2023}

\usepackage[utf8]{inputenc} 
\usepackage[T1]{fontenc}    
\usepackage{hyperref}       
\usepackage{url}            
\usepackage{booktabs}       
\usepackage{amsfonts}       
\usepackage{nicefrac}       
\usepackage{microtype}      
\usepackage[dvipsnames]{xcolor}
\usepackage{bookmark}

\hypersetup{colorlinks=true,citecolor=Blue,linkcolor=MidnightBlue,urlcolor=Green}

\usepackage{graphicx} 
\usepackage{mathtools}
\usepackage{amssymb,amsthm}
\usepackage{multirow} 
\usepackage[english]{babel}
\usepackage{bbm}
\usepackage{bm}
\usepackage[noabbrev,capitalize]{cleveref}
\usepackage{enumitem}
\usepackage{pgfplots}
\usepackage{tikz}
\usetikzlibrary{arrows,trees}

\pgfplotsset{compat=1.18}


\newcommand{\opt}{^\star}
\renewcommand{\exp}[1]{\operatorname{exp}\left( #1\right) }
\newcommand{\tr}{^{\top}}
\newcommand{\Real}{\mathbb{R}}
\renewcommand{\ss}{\mid}    
\newcommand{\probs}[1]{\Delta_{#1}}  
\newcommand{\Mc}{M_{\mathrm{C}}}
\newcommand{\Me}{M_{\mathrm{E}}}

\DeclareMathOperator{\kl}{KL}

\DeclareMathOperator*{\argmax}{arg\,max}

\DeclarePairedDelimiterXPP\expect[2]{\mathbb{E}_{#1}}[]{}{#2}%

\newcommand{\Ex}[1]{\expect*{}{#1}}

\newcommand{\risko}{\psi}
\DeclareMathOperator{\varo}{VaR}
\DeclareMathOperator{\cvaro}{CVaR}
\DeclareMathOperator{\evaro}{EVaR}

\DeclareMathOperator{\ess}{ess}

\newcommand{\Po}{\mathbb{P}}
\renewcommand{\Pr}[1]{\Po\left[ #1 \right]}
\renewcommand{\P}[1]{\Pr{#1}}

\newcommand{\var}[2]{\varo_{#1} \left[#2\right]}
\newcommand{\cvar}[2]{\cvaro_{#1} \left[#2\right]}
\newcommand{\evar}[2]{\evaro_{#1} \left[#2\right]}

\newcommand{\ZetaC}{\mathcal{Z}_{\mathrm{C}}}
\newcommand{\ZetaE}{\mathcal{Z}_{\mathrm{E}}}

\newcommand{\E}{\mathbb{E}}

\newcommand{\quant}{{\mathfrak{q}}}

\renewcommand*{\cite}[2][]{\citep[#1]{#2}}



\newcommand{\removed}[1]{{}}
\newcommand{\EDmodified}[1]{{\color{blue} #1}}
\theoremstyle{plain}
\newtheorem{theorem}{Theorem}[section]
\newtheorem{proposition}[theorem]{Proposition}
\newtheorem{lemma}[theorem]{Lemma}

\theoremstyle{definition}

\theoremstyle{remark}

\newtheorem{example}{Example}

\title{On Dynamic Programming Decompositions\\of Static Risk Measures in Markov Decision Processes}

%
\author{%
  Jia Lin Hau\\
  University of New Hampshire\\
  Durham, NH \\
  \texttt{jialin.hau@unh.edu} \\
  \And
  Erick Delage \\
  HEC Montréal \\
  Montréal (Québec) \\
  \texttt{erick.delage@hec.ca} \\
  \And
  Mohammad Ghavamzadeh\thanks{The work was done prior to joining Amazon, while the author was at Google Research.} \\
  Amazon \\
  Palo Alto, CA \\
  \texttt{ghavamza@amazon.com}
  \And
  Marek Petrik \\
  University of New Hampshire \\
  Durham, NH \\
  \texttt{mpetrik@cs.unh.edu} \\
}

\begin{document}

\maketitle

\begin{abstract}
  Optimizing static risk-averse objectives in Markov decision processes is difficult because they do not admit standard dynamic programming equations common in Reinforcement Learning (RL) algorithms. Dynamic programming decompositions that augment the state space with discrete risk levels have recently gained popularity in the RL community. Prior work has shown that these decompositions are optimal when the risk level is discretized sufficiently. However, we show that these popular decompositions for Conditional-Value-at-Risk~(CVaR) and Entropic-Value-at-Risk~(EVaR) are inherently suboptimal regardless of the discretization level. In particular, we show that a saddle point property assumed to hold in prior literature may be violated. However, a decomposition does hold for Value-at-Risk and our proof demonstrates how this risk measure differs from CVaR and EVaR. Our findings are significant because risk-averse algorithms are used in high-stakes environments, making their correctness much more critical.
\end{abstract}

\section{Introduction}

Risk-averse reinforcement learning~(RL) seeks to provide a risk-averse policy for high-stakes real-world decision problems. These high-stake domains include autonomous driving~\cite{Sharma2020,Jin2019}, robot collision avoidance~\cite{Hakobyan2021,Ahmadi2021}, liver transplant timing~\cite{Kose2016}, HIV treatment~\cite{Zhong2020,Keramati2020}, unmanned aerial vehicle~(UAV)~\cite{Choudhry2021}, and investment liquidation~\cite{Min2022}, to name a few. Because these domains call for reliable solutions, risk-averse algorithms must be based on solid theoretical foundations. This is one reason why monetary risk measures, such as Value-at-Risk~(VaR) and Conditional Value-at-Risk~(CVaR), have become pervasive in risk-averse RL~\cite{Prashanth2022}. Indeed, risk measures such as CVaR are known to be coherent~\cite{artznerCohRisk} with respect to~a set of fundamental axioms that define how risk should be quantified and have been adopted as gold standards in banking regulations~\citep{BaselIII}.

Introducing risk-averse objectives in Markov decision processes~(MDPs)---the primary model used in RL---is challenging. Dynamic programming, the linchpin of most RL algorithms, cannot be used directly to optimize a risk measure like VaR or CVaR in MDPs. One line of work tackles this challenge by exploiting the primal representation of risk measures and augmenting the state space of their dynamic programs (DPs) with an additional parameter that typically represents the total cumulative reward up to the current point~\cite{Chow2014, Wu1999, Boda2004, Lin2003, Bauerle2011, Xu2011a, Filar1995, Hau2023}. Even when the original MDP is finite, this DP requires computing the value function for a continuous state space, and thus, has been considered inefficient in practice~\cite{Chapman2021, Chow2015, Li2022}.  

Another line of recent work leverages the dual representation to produce a \emph{risk-level decomposition} of risk measures~\cite{Pflug2016}. Using this decomposition, numerous authors have derived DPs for common risk measures and integrated them within various RL algorithms~\citep{Ni2022, Ding2022, Ding2023, Rigter2021, Chapman2019, Stanko2019, Chow2015, Chapman2021}. Although this risk-level decomposition requires augmenting the state space with a continuous parameter, this parameter is naturally bounded between 0 and 1. 
It has been generally accepted, with several \emph{tentative} proofs supporting this claim~\cite{Chow2015, Li2022}, that these DPs recover the optimal policy if we can discretize the augmented state space sufficiently finely. 
Moreover, it is believed that one can use the optimal value function from this DP to recover the policies that are optimal for the full range of risk levels. 

In this paper, we make a surprising discovery that numerous claims of optimality of risk-level decompositions published in the past several years are incorrect. Even when one discretizes the augmented state space arbitrarily finely, most risk-level DPs are not guaranteed to recover the optimal value function and policy. There are several reasons why existing arguments fail. As the most common reason, several papers assume that a certain saddle point property holds, either explicitly~\cite{Chow2015} or implicitly~\cite{Ding2022, Ding2023}. We show that this property does not generally hold, invalidating the optimality of DPs, as hinted at in~\citet{Chapman2019, Chapman2021}. This finding directly refutes the claimed or hypothesized optimality of algorithms proposed in many recent research papers and pre-prints, such as~\citet{Ding2022, Ding2023, Rigter2021, Chapman2019, Stanko2019, Chow2015}. Our results also affect applications of these algorithms, such as automated vehicle motion planning~\cite{Jin2019}. We also identify gaps in related decompositions~\cite{Ni2022, Li2022} and propose how to fix them.

We make the following contributions in this paper. \emph{First}, we show in \cref{sec:CVaR} that the popular DP for optimizing CVaR in MDPs may not recover the optimal value function and policy regardless of how finely one discretizes the risk level in the augmented states. This method was first proposed in~\citet{Chow2015} but adopted widely afterwards~\cite{Ding2023,Ding2022,Rigter2021,Chapman2019,Stanko2019}. The simple counterexample in this section contradicts the optimality claims in~\citet{Chow2015, Ding2022}. We hypothesize that prior work missed this issue because the CVaR DP works for policy evaluation and only fails when one uses it to optimize policies based on the ``risk-to-go value function''. Therefore, our results do not contradict the original decomposition in~\citet{Pflug2016} that only applies to policy evaluation. We give a new independent and simple proof that the CVaR decomposition indeed works when evaluating a fixed policy.

\emph{Second}, we show in \cref{sec:EVaR} that the DP for optimizing the Entropic-Value-at-Risk in MDPs, proposed by~\citet{Ni2022}, does not compute the correct value function even when the policy is fixed. Although EVaR has not been as popular as CVaR, it has been gaining attention in recent years~\cite{Hau2023}. We give an example that contradicts the correctness claims of the risk-level decomposition for EVaR in \citet{Ni2022}. The gap that we identify with this objective applies to both policy evaluation and policy optimization. Furthermore, we prove a new, correct EVaR decomposition for policy evaluation. Unfortunately, the EVaR decomposition fails and is sub-optimal when applied to policy optimization, similar to CVaR.

\emph{Third}, we propose an \emph{optimal dynamic program} for policy optimization of VaR in \cref{sec:VaR}. Our DP is based on a risk-level decomposition that closely resembles the quantile MDP decomposition in \citet{Li2022} but corrects for several technical inaccuracies. The derivation shows why VaR stands apart from coherent risk measures like CVaR and EVaR. VaR is unique in that the decomposition can be constructed directly from the primal formulation of the risk measure, which avoids the complications that arise in the robust formulations used in CVaR and EVaR decompositions. 

It is important to note that the correctness of DPs that augment the state space with the accumulated rewards is unaffected by our results~\cite{Hau2023, Chow2014, Bauerle2011, Chow2018a}. These DPs use the \emph{primal} risk measure representation and do not suffer from the same saddle point issue as the augmentation methods that use the \emph{dual} representation of the risk measures, such as the one in~\citet{Chow2015}. 

\section{Preliminaries}
This section summarizes relevant properties of monetary risk measures and outlines how they are typically used in the context of solving MDPs. 

\paragraph{Monetary Risk Measures} We restrict our attention to probability spaces with a finite outcome space $\Omega$ such that $|\Omega| = m$ for some $m \in \mathbb{N}$. We use $\mathbb{X} = \Real^{m}$ to denote the space of real-valued random variables. To improve the clarity of probabilistic claims, we always adorn random variables with a tilde, such as $\tilde{x}\in \mathbb{X}$. In finite spaces, we can represent any random variable $\tilde{x}\in \mathbb{X}$ as a vector $\bm{x} \in \Real^m$. We also use $\bm{q} \in \probs{m}$ to represent a probability distribution over $\Omega$ where $\probs{m}$ represents the $m$-dimensional probability simplex. Using this notation, we can write that \( \Ex{\tilde{x}} =  \bm{q}\tr \bm{x} \). 

A \emph{monetary risk measure} $\risko\colon \mathbb{X} \to \Real$ assigns a real value to each real-valued random variable in a way that it is monotone and cash-invariant~\cite{Shapiro2014, Follmer2016}. A risk measure can be seen as a generalization of the expectation operator $\E[\cdot]$ that also takes into account the uncertainty in the random variable. In this work, we define all risk measures for random variables $\tilde{x}$ that represent \emph{rewards}. Thus, the risk-averse decision-maker aims to choose actions that maximize the value of the risk measure, i.e.,~a higher value of risk measure represents a lower exposure to risk. 

We consider three monetary risk measures common in RL. Perhaps the most well-known measure is \emph{Value-at-Risk}~(VaR), which is defined for a risk-level $\alpha \in [0,1]$ and a random variable $\tilde{x}\in \mathbb{X}$ in modern literature as~(e.g.,~\citealt{Follmer2016,Shapiro2014})
\begin{equation} \label{eq:var-definition}
  \var{\alpha}{\tilde{x}}
  \; =\;
    \sup\; \left\{z \in \Real \mid \Pr{ \tilde{x} < z } \le \alpha\right\}
  \; =\;
    \inf\; \left\{z \in \Real \mid \Pr{ \tilde{x}\le z } > \alpha\right\}.
\end{equation}
Note that $\var{1}{\tilde{x}}=\infty$. The equality between the two definitions holds, for example, by \citet[remark~A.20]{Follmer2016}.

Another popular risk measure is the \emph{Conditional-value-at-Risk}~(CVaR), which is defined for a risk level $\alpha\in [0,1]$ and a random variable $\tilde{x} \in \mathbb{X}$ distributed as $\tilde{x} \sim \bm{q}$ as~(e.g.,~\citealt[definition 11.8]{Follmer2016} and~\citealt[eq.~6.23]{Shapiro2014})
\begin{equation} \label{eq:cvar-definition}
  \cvar{\alpha}{\tilde{x}}
  \; =\;
   \sup_{z\in\mathbb R}\; \left(z - \alpha^{-1} \E\left[z - \tilde{x}\right]_{+} \right)
 \; =\;  \inf\; \Bigl\{ \bm{\xi}\tr \bm{x} \ss \bm{\xi}\in \probs{m}, \alpha \cdot \bm{\xi} \le \bm{q} \Bigr\},
\end{equation}
with $\cvar{0}{\tilde{x}} = \ess \inf [\tilde{x}]$ and $\cvar{1}{\tilde{x}} = \Ex{\tilde{x}}$. The equality above follows from standard conjugacy arguments for finite probability spaces~\cite{Follmer2016}. Note that our CVaR definition applies to $\tilde{x}$ that represents rewards and assumes that a higher value of the risk measure is preferable to a lower value. Other CVaR formulations exist in the literature, but they induce identical preferences for appropriately chosen rewards and risk level $\alpha$. 

Finally, the \emph{entropic value at risk}~(EVaR), with $\evar{0}{\tilde{x}} = \ess \inf [\tilde{x}]$ and $\evar{1}{\tilde{x}} = \Ex{\tilde{x}}$, is defined for $\alpha\in (0,1]$ as~\cite{Ahmadi-Javid2012}
\begin{equation}\label{eq:evar-def-app}
  \begin{aligned}
    \evar{\alpha}{\tilde{x}}
    &= \sup_{\beta>0} \, - \frac{1}{\beta} \log \left(  \alpha^{-1} \E \big[\exp{ -\beta \cdot  \tilde{x} }\big] \right)  \\
    &= \inf \, \left\{ \bm{\xi}^T\bm{x} \mid  \bm{\xi}\in \probs{m}, \bm{\xi} \ll \bm{q}, \kl(\bm{\xi} \| \bm{q}) \le -\log \alpha \right\},
  \end{aligned}
\end{equation}
where $\kl$ is the standard KL-divergence defined for each $\bm{x}, \bm{y}\in \probs{m}$ as \(\kl(\bm{x} \| \bm{y}) = \sum_{\omega \in \Omega} x_{\omega} \log \left(\nicefrac{x_{\omega }}{ y_{\omega}}\right)\). This definition is valid only when $\bm{x}$ is absolutely continuous with respect to $\bm{y}$, which is denoted as $\bm{x} \ll \bm{y}$ and corresponds to $y_{\omega} = 0 \Rightarrow x_{\omega}  = 0$ for each $\omega\in \Omega$.

\paragraph{Risk Averse MDPs} A Markov decision process~(MDP) is a sequential decision model that underlies most of RL~\cite{Puterman2005}. We consider finite MDPs with states $\mathcal{S} = \left\{ s_1, \dots , s_S \right\}$ and actions $\mathcal{A} = \left\{ a_1, \dots , a_A \right\}$. After taking an action in a state, the agent transitions to the next state according to a transition probability function $p\colon \mathcal{S} \times  \mathcal{A} \to  \probs{ \mathcal{S} }$ such that $p(s,a,s')$ represents the transition probability from $s\in \mathcal{S}$ to $s'\in \mathcal{S}$ after taking $a\in \mathcal{A}$. We use $\bm{p}_{s,a} = p(s,a,\cdot) \in \probs{S}$ to denote the vector of transition probabilities. The initial state $\tilde{s}_0$ is distributed according to $\bm{\hat{p}} \in \probs{S}$. To avoid divisions by $0$ that are not central to our claims, we assume that $\hat{p}_s > 0$ for each $s\in\mathcal{S}$. Finally, the reward function is $r\colon \mathcal{S} \times \mathcal{A} \times \mathcal{S} \to \Real$, where $r(s,a,s')$ represents the deterministic reward associated with the transition to $s'$ from $s$ after taking an action $a$. 

The most general solution to an MDP is a \emph{history-dependent randomized} policy $\pi$ which maps a sequence of observed states and actions $s^0, a^0, s^1, a^1, \dots , s^t$ to a distribution over the next action $a^t$. It is well-known that with risk-neutral objectives, there always exists an optimal stationary---depends only on the last state---deterministic policy~\cite{Puterman2005}. When the objective is risk-averse, like VaR, or CVaR, there may not exist an optimal stationary or deterministic policy. Hence, we use the symbol $\Pi$ to denote the set of history-dependent randomized policies in the remainder of the paper. 

This paper focuses on the \emph{finite-horizon objective} in which the agent aims to compute policies that optimize the sum of rewards over a known horizon $T$.  We further restrict our attention to the objective with horizon $T = 1$. It turns out that having a single time step is sufficient to derive our counterexamples to existing dynamic programs. Moreover, deriving the decompositions with $T=1$ makes it possible to avoid technicalities caused by history-dependent policies, which could distract us from the main ideas presented in this work. Our results an be extended to general horizons $T > 1$ and the discounted infinite-horizon objectives using standard techniques~\cite{Chow2015}.

With horizon $T =1 $, the set of randomized history-dependent policies is  $\Pi  = \left\{  \bm{\pi}\colon  \mathcal{S} \rightarrow  \probs{ A } \right\}$. The symbol $\pi(s,a)$ denotes the probability of action $a$ in a state $s$, and $\bm{\pi}(s) = \pi(s,\cdot ) \in \probs{A}$ denotes the $A$-dimensional vector of action probabilities in a state $s$. Given a risk measure $\risko$ with a risk level $\alpha \in [0,1]$, the finite-horizon risk-averse value of a policy $\pi \in \Pi$ is computed as
\begin{equation} \label{eq:policyEvalEquation}
  v_0^\pi(\alpha) =
  \risko^{\tilde{a} \sim \bm{\pi}(\tilde{s})}_{\alpha}\left[r(\tilde{s}, \tilde{a},\tilde{s}') \right] ,
\end{equation}
where the superscript in $\risko^{\tilde{a} \sim \bm{\pi}(\tilde{s})}_{\alpha}$ specifies the distribution of the random action. Throughout the paper, we generally use $\tilde{s}$ to denote the random state at time $t = 0$ and $\tilde{s}'$ to denote the random state at time $t=1$. In risk-neutral objectives, when $\risko = \E$, one can use the tower property of the expectation operator and define a value function $v_t$ for each time step $t$~\cite{Puterman2005}, but this property does not hold in most static risk measures~\cite{Hau2023}. The term \emph{policy evaluation} in the remainder of the paper refers to computing the value in~\eqref{eq:policyEvalEquation}. 

The goal in an MDP is to compute an \emph{optimal} value function and a policy that attains it. In risk-averse MDPs, this goal is formalized as the following risk-averse optimization 
\begin{equation}\label{eq:policyOptEquation}
  v_0\opt(\alpha) = \max_{\pi \in \Pi} \, v_0^{\pi}(\alpha) =  \max_{\pi \in \Pi } \,
  \risko^{\tilde{a} \sim \bm{\pi}(\tilde{s})}_{\alpha}\left[r(\tilde{s}, \tilde{a},\tilde{s}') \right],
\end{equation}
with the \emph{optimal policy} $\pi\opt$ being any policy that attains the maximum in~\eqref{eq:policyOptEquation}. As with policy evaluation, when $\risko = \E$, the optimal value function $v_t\opt $ can be defined for each time-step $t$~\cite{Puterman2005}, but this is impossible in general for common risk measures, like VaR and CVaR. The term \emph{policy optimization} in the remainder of the paper refers to computing the value and the maximizer in~\eqref{eq:policyOptEquation}.

In the remainder of the paper, we study dynamic programming algorithms proposed to solve the policy evaluation problem in~\eqref{eq:policyEvalEquation} and policy optimization problem in~\eqref{eq:policyOptEquation}. In general, these algorithms build on risk-level decomposition~\cite{Pflug2016} of risk measures to define a value function $v_t^\pi(s,\alpha)$ for each time step $t\in[T]$, state $s\in \mathcal{S}$, and risk-level $\alpha\in [0,1]$~\cite{Chow2015}. The value function represents the risk-adjusted sum of rewards that can be obtained if starting in a state $s \in \mathcal{S}$ at time $t$ and a risk level $\alpha$. For example, one would define the value function as $v_1^\pi(s,\alpha)=\risko^{\tilde{a} \sim \bm{\pi}(s)}_{\alpha}[r(s,\tilde{a},\tilde{s}')]$ and compute $v_0^{\pi}$ using a Bellman operator $T^{\pi}_{\alpha}$ as $v_0^{\pi}(\alpha) = (T^{\pi} v_1^{\pi})(\alpha)$. In risk-neutral objectives, the Bellman operator is defined as $(T^{\pi} (v_1^\pi))(\alpha) = \E[v_1^{\pi}(\tilde{s},\alpha)]$, with $\alpha\in\{1\}$, but in risk-averse formulations the operator definition is more complex. The remainder of the paper discusses the decompositions and the operator for CVaR, EVaR, and VaR risk measures respectively.

\section{CVaR: Decomposition Fails in Policy Optimization} \label{sec:CVaR}

In this section, we show that a common CVaR decomposition proposed in \citet{Chow2015} and used to optimize risk-averse policies is inherently sub-optimal regardless of how closely one discretizes the state space. The following proposition represents one of the key results used to decompose the risk measure in multi-stage decision-making. 

\begin{proposition}[lemma 22 in~\citealt{Pflug2016}] \label{thm:pflug-deco}
Suppose that $\pi\in \Pi$ and $\tilde{s} \sim \bm{\hat{p}}$, $\tilde{a} \sim \bm{\pi}(\tilde{s})$,  $\tilde{s}' \sim \bm{p}_{s,a}$. Then,  
\begin{equation} \label{eq:cvar-dec-fixed}
  \cvar{\alpha}{r(\tilde{s}, \tilde{a}, \tilde{s}') }
  \quad  = \quad 
  \min_{\bm{\zeta} \in \ZetaC} \; \sum_{s \in \mathcal{S}} \zeta_s  \cvar{\alpha\zeta_s \hat{p}_s^{-1}}{r(s, \tilde{a}, \tilde{s}') \mid  \tilde{s} = s} ,
\end{equation}
where the state $s$ on the right-hand side is not random and
\begin{equation} \label{eq:xi-definition}
\ZetaC \; =\;  \left\{ \bm{\zeta} \in \probs{ S } \mid  \alpha\cdot \bm{\zeta} \le \bm{\hat{p}} \right\} .
\end{equation}
\end{proposition}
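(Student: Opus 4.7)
The plan is to use the dual (conjugate) representation of CVaR given in \eqref{eq:cvar-definition} on both sides of \eqref{eq:cvar-dec-fixed} and exhibit a bijection between the feasible changes of measure. Let $\bm{q}$ denote the joint distribution of $(\tilde{s},\tilde{a},\tilde{s}')$, i.e.\ $q_{s,a,s'} = \hat{p}_s\,\pi(s,a)\,p(s,a,s')$, and let $\bm{q}^s$ denote the conditional distribution of $(\tilde{a},\tilde{s}')$ given $\tilde{s}=s$, i.e.\ $q^s_{a,s'} = \pi(s,a)\,p(s,a,s')$. Then the dual formulation rewrites the left-hand side of \eqref{eq:cvar-dec-fixed} as
\begin{equation*}
\cvar{\alpha}{r(\tilde{s},\tilde{a},\tilde{s}')}
\;=\;
\inf\Bigl\{\textstyle\sum_{s,a,s'} \xi_{s,a,s'}\,r(s,a,s') \;\Bigm|\; \bm{\xi}\in\probs{SAS},\; \alpha\,\bm{\xi}\le\bm{q}\Bigr\}.
\end{equation*}

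Next I would expand each conditional CVaR on the right-hand side by the same dual formula, using risk level $\alpha\zeta_s\hat{p}_s^{-1}$ and reference distribution $\bm{q}^s$. Writing the decision variables as $\bm{\xi}^s\in\probs{AS}$ with the constraint $\alpha\zeta_s\hat{p}_s^{-1}\bm{\xi}^s\le\bm{q}^s$, the right-hand side of \eqref{eq:cvar-dec-fixed} becomes the joint minimization
\begin{equation*}
\min_{\bm{\zeta}\in\ZetaC,\,\{\bm{\xi}^s\}_{s\in\S}}
\;\sum_{s\in\S}\sum_{a,s'} \zeta_s\,\xi^s_{a,s'}\,r(s,a,s')
\end{equation*}
subject to $\bm{\xi}^s\in\probs{AS}$ and $\alpha\zeta_s\xi^s_{a,s'}\le\hat{p}_s q^s_{a,s'}$ for each $(s,a,s')$.

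The key step is the change of variables $\eta_{s,a,s'} \coloneqq \zeta_s\,\xi^s_{a,s'}$. Under this substitution, the objective becomes $\sum_{s,a,s'}\eta_{s,a,s'} r(s,a,s')$, the marginal identity $\sum_{a,s'}\eta_{s,a,s'}=\zeta_s$ makes $\bm{\eta}\in\probs{SAS}$ whenever $\bm{\zeta}\in\probs{S}$ (and conversely recovers $\bm{\xi}^s$ on $\{s:\zeta_s>0\}$), and the constraint $\alpha\zeta_s\xi^s_{a,s'}\le\hat{p}_s q^s_{a,s'}$ collapses exactly to $\alpha\,\eta_{s,a,s'}\le q_{s,a,s'}$. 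Moreover the outer constraint $\alpha\bm{\zeta}\le\bm{\hat p}$ defining $\ZetaC$ is automatically implied by summing $\alpha\,\eta_{s,a,s'}\le q_{s,a,s'}$ over $(a,s')$, so it is redundant and may be dropped. Hence the two optimization problems have the same optimal value, establishing \eqref{eq:cvar-dec-fixed}.

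The main subtleties I anticipate are boundary issues. When $\zeta_s=0$, the inner risk level $\alpha\zeta_s\hat{p}_s^{-1}=0$ gives $\cvar{0}{\cdot}=\ess\inf$, so the term $\zeta_s\cvar{0}{\cdot}$ must be interpreted as $0$; this is consistent with the change of variables since then $\eta_{s,a,s'}=0$ and the corresponding $\bm{\xi}^s$ drops out of the problem. The case $\alpha=0$ reduces both sides to $\ess\inf[r(\tilde{s},\tilde{a},\tilde{s}')]$ by direct inspection, and $\alpha=1$ forces $\bm{\zeta}=\bm{\hat p}$ and $\bm{\xi}^s=\bm{q}^s$, recovering the tower property for expectations. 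The assumption $\hat{p}_s>0$ from the preliminaries ensures that $\alpha\zeta_s\hat{p}_s^{-1}$ is well defined throughout.
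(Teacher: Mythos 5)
Your proposal is correct and follows essentially the same route as the paper's proof: both sides are written in the dual (conjugate) form of CVaR and the identification $\eta_{s,a,s'}=\zeta_s\,\xi^s_{a,s'}$ is exactly the paper's substitution $\xi_{s,a,s'}=\zeta_s\chi_{a,s'}$, with the marginal $\zeta_s=\bm{1}\tr\bm{\xi}_s$ playing the role of the auxiliary variable and the constraint $\alpha\bm{\zeta}\le\bm{\hat{p}}$ recovered by summation. The only cosmetic difference is that you exhibit a direct bijection between the two feasible sets where the paper invokes the interchangeability principle to separate the inner minimizations, and your treatment of the boundary cases $\zeta_s=0$ and $\alpha\in\{0,1\}$ matches the paper's.
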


The notation in \cref{thm:pflug-deco} differs superficially from lemma~22 in~\citet{Pflug2016}. Specifically, our CVaR is defined for rewards rather than costs, the meaning of our $\alpha$ corresponds to $1-\alpha$ in~\citet{Pflug2016}, and we use $\xi_s = z_s \hat{p}_s$ as the optimization variable. We include a simple proof of \cref{thm:pflug-deco} for completeness in \cref{app:PflugdecompProof}. 

The decomposition in \cref{thm:pflug-deco} is important because it shows that the CVaR evaluation can be formulated as a dynamic program. The theorem shows that CVaR at time $t=0$ decomposes into a convex combination of CVaR values at $t=1$. Recursively repeating this process, one can formulate a dynamic program for any finite time horizon $T$. Because the risk level at $t=1$ differs from the level at $t=0$ and depends on the optimal $\bm{\zeta}$, one must compute CVaR values for all (or many) risk-levels $\alpha \in [0,1]$ at time $t=1$. As a result, the dynamic program includes an additional state variable that represents the current risk level.

\Citet{Chow2015} proposed to adapt the decomposition in \cref{thm:pflug-deco} to policy optimization as %
\begin{equation} \label{eq:cvar-dec-optim}
\begin{aligned}
\max_{\pi\in \Pi} \;\cvaro_{\alpha}^{\tilde{a}\sim\bm{\pi}(\tilde{s})}[{r(\tilde{s}, \tilde{a}, \tilde{s}')}]
&\;=\; 
\max_{\pi\in \Pi} \min_{\bm{\zeta} \in \ZetaC} \; \sum_{s \in \mathcal{S}} \zeta_s  \left( \cvaro_{\alpha \zeta_s \hat{p}_s^{-1}}^{ \tilde{a} \sim \bm{\pi}(s)} [{r(s, \tilde{a}, \tilde{s}') \mid \tilde{s} = s}] \right) \\
&\; \overset{??}{=} \; 
\min_{\bm{\zeta} \in \ZetaC} \; \sum_{s \in \mathcal{S}} \zeta_s  \left( \max_{\bm{d}\in \probs{ \mathcal{A} }} \cvaro_{\alpha \zeta_s \hat{p}_s^{-1}}^{\tilde{a} \sim \bm{d}}[{r(s, \tilde{a}, \tilde{s}') \mid  \tilde{s} = s}] \right) \,.
\end{aligned}
\end{equation}
They used the decomposition in~\eqref{eq:cvar-dec-optim} to formulate a dynamic program with the current risk level as an additional state variable. We prove in the following theorem that the second equality in~\eqref{eq:cvar-dec-optim} marked with question marks is false in general.

\begin{figure}
  \centering
  \begin{tikzpicture}[->,>=stealth',shorten >=1pt,auto,node distance=1.8cm,semithick,level distance=20mm]
    \tikzstyle{level 1}=[sibling distance=15mm]
    \tikzstyle{level 2}=[sibling distance=12mm]
    \tikzstyle{level 3}=[sibling distance=7mm,level distance=30mm]
    \node (s0){start} [grow'=right,->]
    child { node (s1) {$s_1$}
      child {node (s11) {$s_1,a_1$}
          child {node (s111) {$r(s_1,a_1,s_1) = -50$}}
          child {node (s112) {$r(s_1,a_1,s_2) = 100$}}
      }
      child {node (s12) {$s_1,a_2$}
        child {node (s121) {$r(s_1, a_2, \cdot) = 0$} }
        }
    }
    child { node (s2) {$s_2$}
      child { node (s21) {$s_2,a_1$}
        child {node (s211) {$r(s_2,a_1,\cdot) = 10$}}
      }
    };
\end{tikzpicture}
  \caption{Rewards of MDP $\Mc$ used in the proof of \cref{thm:cvar-wrong}. The dot indicates that the rewards are independent of the next state.}
  \label{fig:decomposition-cvar}
\end{figure}

\begin{theorem}\label{thm:cvar-wrong}
There exists an MDP and a risk level $\alpha\in [0,1]$ such that 
\begin{equation} \label{eq:cvar-dec-optim-false}
\max_{\pi\in \Pi} \; \cvaro_{\alpha}^{\tilde{a}\sim\bm{\pi}(\tilde{s})}[{r(\tilde{s}, \tilde{a},\tilde{s}')}]
  <
    \min_{\bm{\zeta} \in \ZetaC} \; \sum_{s \in \mathcal{S}} \zeta_s  \left( \max_{\bm{d}\in \probs{\mathcal{A}}} \cvaro_{\alpha \zeta_s \hat{p}_s^{-1}}^{\tilde{a} \sim \bm{d}}[{r(s, \tilde{a}, \tilde{s}') \mid  \tilde{s} = s}] \right) \,.
\end{equation}
\end{theorem}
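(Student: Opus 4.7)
My plan is to construct an explicit counterexample MDP of the form depicted in Figure~\ref{fig:decomposition-cvar}, fixing two states, initial distribution $\hat p_1 = \hat p_2 = 1/2$, transitions $p(s_1, a_1, s_1) = p(s_1, a_1, s_2) = 1/2$, and risk level $\alpha = 1/2$. With these choices the constraint set simplifies to $\ZetaC = \probs{2}$ and the scaled risk level reduces to $\alpha \zeta_s / \hat p_s = \zeta_s$, so both sides of~\eqref{eq:cvar-dec-optim-false} become closed-form expressions in the single scalar $\zeta_1$ (with $\zeta_2 = 1 - \zeta_1$).

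To upper bound the LHS, I would parameterize any policy by $p = \pi(s_1, a_1)$, enumerate the induced four-atom reward distribution on $\{-50, 0, 10, 100\}$ with masses $\{p/4,\, (1-p)/2,\, 1/2,\, p/4\}$, and evaluate $\cvar{1/2}{\cdot}$ by sorting outcomes from worst to best and averaging over the worst half of mass. The closed form is $-20p$, so the supremum over policies equals $0$, attained by always playing $a_2$ in $s_1$.

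To lower bound the RHS, I would restrict the inner $\max_d$ to pure actions; this can only shrink each per-state maximum and hence the overall min-max value. The pure-action maxima are: in $s_1$, the value $0$ for $\zeta_1 \le 3/4$ (where $a_2$ dominates) and $100 - 75/\zeta_1$ for $\zeta_1 > 3/4$ (where $a_1$ dominates); in $s_2$, the constant $10$. Substituting back yields a piecewise function $F(\zeta_1) = 10 - 10\zeta_1$ on $[0, 3/4]$ and $F(\zeta_1) = 90\zeta_1 - 65$ on $[3/4, 1]$, both of which achieve their unique minimum $5/2$ at the kink $\zeta_1 = 3/4$. Hence the RHS is at least $5/2 > 0$, strictly exceeding the LHS.

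The one subtle point is justifying the pure-action restriction of the inner $\max_d$, since it is formally taken over $\probs{\mathcal A}$. This is not actually an obstacle: to establish strict inequality it suffices to produce any valid lower bound on the RHS, and shrinking the feasible set of the inner maximization can only shrink the min-max value. The remainder of the argument reduces to direct bookkeeping of CVaR values on the discrete reward distributions in each state.
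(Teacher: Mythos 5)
Your proposal is correct and is essentially the paper's argument: you use the same two-state counterexample (identical rewards, $\alpha=1/2$, uniform initial distribution; only the transition probabilities at $(s_1,a_1)$ differ from the paper's $0.4/0.6$), obtaining LHS $=0$ versus RHS $\ge 5/2$ where the paper gets $0$ versus $4$. The only stylistic difference is that you evaluate the left-hand side directly from the four-atom return distribution rather than rewriting it via the Pflug decomposition as a max--min of $\theta_\pi$, and your pure-action lower bound on the right-hand side is a valid (indeed slightly cleaner) substitute for the paper's appeal to convexity of CVaR in the distribution; the arithmetic checks out.
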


Before proving \cref{thm:cvar-wrong}, we discuss its implications. First, \cref{thm:cvar-wrong} contradicts theorems~5 and~7 in~\citet{Chow2015} and shows that their algorithm is inherently sub-optimal regardless of the resolution of the discretization.~\Cref{thm:cvar-wrong} also contradicts the optimality of the accelerated dynamic program proposed in \citet{Stanko2019}. The result of \cite{Chow2015} was exploited as is in \citet{Chapman2019}, \citet{Ding2022}, and \citet{Jin2019} to propose DP reductions, and extended, without proof, in \cite{Rigter2021} to the context of a Bayesian MDP.

Finally, it is important to emphasize that \cref{thm:cvar-wrong} only applies to the policy optimization setting and does not contradict \cref{thm:pflug-deco}, which holds for the evaluation of policies that assign the same action distribution to each history of states and actions~(i.e.,~ policies that are independent of the hypothesized values of $\bm{\zeta}$).

\begin{proof}
Let $\alpha = 0.5$ and consider the MDP $\Mc$ in \cref{fig:decomposition-cvar}. In state $s_1$, both actions $a_1$ and $a_2$ are available, and in state $s_2$, only action $a_1$ is available. The MDP's rewards are
  \begin{align*}
    r(s_1, a_1, s_1) &= -50, &
    r(s_1, a_1, s_2) &= 100, \\
    r(s_1,a_2,s_1) &= r(s_1,a_2,s_2)  = 0, & 
    r(s_2, a_1, s_1) &= r(s_2,a_1,s_2) = 10 \,.
  \end{align*}
The transition probabilities in $\Mc$ are
  \begin{align*}
    p(s_1, a_1, s_1 ) &= 0.4, &    p(s_1, a_1, s_2 ) &= 0.6 \,,
  \end{align*}
and the initial distribution is uniform: $\hat{p}_{ s_1 } = \hat{p}_{ s_2 } = 0.5$.

\begin{figure}
    \centering
    \includegraphics[width=0.6\linewidth]{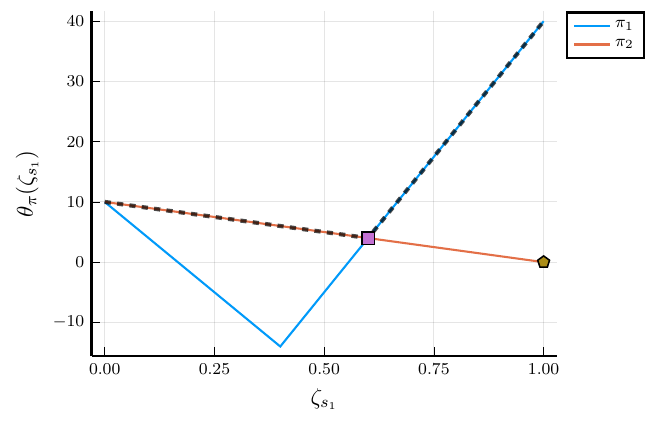}
    \caption{The functions $\theta_{\pi_1}(\cdot)$ and  $\theta_{\pi_2}(\cdot)$ used in the CVaR counterexample in the proof of \cref{thm:cvar-wrong}. The dashed line shows the function $\zeta_{s_1} \mapsto \max_{\pi \in \left\{ \pi_1, \pi_2 \right\}} \theta_{\pi}([\zeta_{s_1}, 1-\zeta_{s_1}])$.}
    \label{fig:cvar-counterexample}
\end{figure}

To simplify the notation, we define $\theta_{\pi}\colon \ZetaC \to \Real$ for each $\pi\in \Pi$ and $\bm{\zeta}\in \ZetaC$ as 
\[
  \theta_{\pi}(\bm{\zeta}) \; =\;  \sum_{s \in \mathcal{S}} \zeta_s  \cvaro_{\alpha \zeta_s \hat{p}_s^{-1}}^{\tilde{a} \sim \bm{\pi}(s)}[{r(s, \tilde{a}, \tilde{s}') \mid  \tilde{s} = s}]  \,.
\]
Because CVaR is convex in the distribution~\cite{Delage2019} and any distribution for $r(\tilde{s},\tilde{a},\tilde{s}')$ obtained from a policy $\pi\in\Pi$ is a mixture of the distributions of $r(\tilde{s},a_1,\tilde{s}')$ and $r(\tilde{s},a_2,\tilde{s}')$, it is sufficient to consider only {\em deterministic} policies (there exists an optimal deterministic policy). 
Thus, we can reformulate the \emph{left-hand side} of~\eqref{eq:cvar-dec-optim-false} in terms of $\theta_{\pi}(\bm{\zeta})$ as
\begin{align*}
\max_{\bm{\pi}\in \Pi} \; \cvaro_{\alpha}^{\tilde{a}\sim \bm{\pi}(\tilde{s})}[{r(\tilde{s}, \tilde{a}, \tilde{s}')}]
&=
\max_{\pi\in \left\{ \pi_1, \pi_2 \right\}} \cvaro_{\alpha}^{\tilde{a}\sim\bm{\pi}(\tilde{s})}[{r(\tilde{s}, \tilde{a},\tilde{s}') }] \\
&=
\max_{\pi\in \left\{ \pi_1, \pi_2 \right\}}   \min_{\bm{\zeta} \in \ZetaC} \sum_{s \in \mathcal{S}} \zeta_s \cdot \cvaro_{\alpha \zeta_s \hat{p}_s^{-1}}^{\tilde{a} \sim \bm{\pi}(s)}[{r(s, \tilde{a}, \tilde{s}') \mid \tilde{s} = s}]  \\
&=
\max_{\pi\in \left\{ \pi_1, \pi_2 \right\}}  \min_{\bm{\zeta} \in \ZetaC} \theta_{\pi}(\bm{\zeta})\,,
\end{align*}
with $\pi_1(s,a_1) = 1- \pi_1(s,a_2) = 1$ and $\pi_2(s,a_2) = 1-\pi_2(s,a_1) = 1$, for all $s \in \mathcal{S}$. The functions $\theta_{\pi_1}(\cdot)$ and $\theta_{\pi_2}(\cdot)$ are depicted in \cref{fig:cvar-counterexample}. Similarly, the \emph{right-hand side} of~\eqref{eq:cvar-dec-optim-false} can be expressed using the convexity of CVaR in the distribution by algebraic manipulation as
\[
 \min_{\bm{\zeta} \in \ZetaC} \; \sum_{s \in \mathcal{S}} \zeta_s  \max_{\bm{d}\in \probs{ A }} \cvaro_{\alpha \zeta_s \hat{p}_s^{-1}}^{\tilde{a} \sim \bm{d}}[{r(s, \tilde{a}, \tilde{s}') \mid  \tilde{s} = s}]
  =
\min _{\bm{\zeta}\in \ZetaC} \max_{\pi\in \left\{ \pi_1, \pi_2 \right\} }   \theta_{\pi}(\bm{\zeta} )\,.
\]
Using the notation introduced above and the sufficiency of optimizing over deterministic policies only, the inequality in~\eqref{eq:cvar-dec-optim-false} becomes
\begin{equation} \label{eq:inequality-dual-simple}
  \max_{\pi\in \left\{ \pi_1, \pi_2 \right\}}  \min_{\bm{\zeta} \in \ZetaC} \theta_{\pi}(\bm{\zeta})
  \; <\; 
\min _{\bm{\zeta}\in \ZetaC} \max_{\pi\in \left\{ \pi_1, \pi_2 \right\} }   \theta_{\pi}(\bm{\zeta} )\,.
\end{equation}
\Cref{fig:cvar-counterexample} demonstrates the inequality in~\eqref{eq:inequality-dual-simple} numerically, with the rectangle representing the left-hand side maximum and the pentagon representing the right-hand side minimum. The dashed line represents the function $\bm{\zeta} \mapsto \max_{\pi\in \left\{ \pi_1, \pi_2 \right\} } \theta_{\pi}(\bm{\zeta} )$. 

To show the strict inequality in~\eqref{eq:inequality-dual-simple} formally, we evaluate the functions $\theta_{\pi_1}(\cdot)$ and $\theta_{\pi_2}(\cdot)$ for MDP $\Mc$. The function $\theta_{\pi_2}(\cdot)$ is linear because the CVaR applies to a constant, and CVaR is translation invariant. The function $\theta_{\pi_1}(\cdot)$ is piecewise-linear and convex, and its slope can be computed using the subgradient that for each $s\in \mathcal{S}$ and $\bm{\hat{\zeta}}\in \ZetaC$ satisfies~\cite{Chow2015}
\[
  \partial_{\zeta_{s}} \; \hat{\zeta}_s \cvar{\alpha \hat{p}_s^{-1} \hat{\zeta}_s}{r(s,\tilde{a}, \tilde{s}') \mid  \tilde{s} = s}
  \; \ni\; 
  \var{\alpha \hat{p}_s^{-1} \hat{\zeta}_s}{r(s,\tilde{a}, \tilde{s}') \mid  \tilde{s} = s} \,.
\]
Simple algebraic manipulation then shows that
\begin{align*}
  \theta_{\pi_1}(\bm{\zeta}) &\;=\;  \max \left\{ 10 - 60\, \zeta_{s_1}, \;  90\, \zeta_{s_1} - 50 \right\} , &
  \theta_{\pi_2}(\bm{\zeta}) &\;=\;  10 - 10\,\zeta_{s_1} \,,
\end{align*}
and $\ZetaC = \probs{ \mathcal S }$, which implies that $\zeta_{s_1} \in (0,1)$. Therefore, by algebraic manipulation, we get the desired strict inequality
\[
  0 \; =\;
  \max_{\pi\in \left\{ \pi_1, \pi_2 \right\}}  \min_{\bm{\zeta} \in \ZetaC} \theta_{\pi}(\bm{\zeta})
  \;<\; 
  \min _{\bm{\zeta}\in \ZetaC} \max_{\pi\in \left\{ \pi_1, \pi_2 \right\} }   \theta_{\pi}(\bm{\zeta} )
  \; =\;
  4\,,
\]
where $0$ and $4$ are represented by the pentagon and rectangle in \cref{fig:cvar-counterexample}, respectively. 
\end{proof}

In summary, the decomposition in \cref{thm:pflug-deco} cannot be exploited in policy optimization because the inequality in the derivation above may not be tight: 
\begin{align*}
 \max_{\pi\in \Pi} \; \cvaro_{\alpha}^{\tilde{a}\sim\bm{\pi}(\tilde{s})}[{r(\tilde{s}, \tilde{a}, \tilde{s}') \mid  \tilde{s} = s}] & \;=\;  \max_{\pi\in \Pi}\min_{\bm{\zeta} \in \ZetaC} \; \sum_{s \in \mathcal{S}} \zeta_s  \cvaro_{\alpha\zeta_s \hat{p}_s^{-1}}^{\tilde{a}\sim\bm{\pi}(\tilde{s})}[{r(s, \tilde{a}, \tilde{s}')\mid  \tilde{s} = s}] \\
 &\;\leq\;  \min_{\bm{\zeta} \in \ZetaC} \max_{\pi\in \Pi}\; \sum_{s \in \mathcal{S}} \zeta_s  \cvaro_{\alpha\zeta_s \hat{p}_s^{-1}}^{\tilde{a}\sim\bm{\pi}(\tilde{s})}[{r(s, \tilde{a}, \tilde{s}')\mid \tilde{s} = s}] \\
 &\;=\;\min_{\bm{\zeta} \in \ZetaC} \; \sum_{s \in \mathcal{S}} \zeta_s  \max_{\bm{d}\in \probs{ A }} \cvaro_{\alpha \zeta_s \hat{p}_s^{-1}}^{\tilde{a} \sim \bm{d}}[{r(s, \tilde{a}, \tilde{s}') \mid  \tilde{s} = s}] \; ,
\end{align*}
where the last equality follows from the interchangeability property of optimization and expected value~\cite[theorem~7.92]{Shapiro2014}.

Finally, we omit to comment on the validity of the CVaR decomposition in \citet{Li2022} given that it considers a different measure than the CVaR defined in \cref{eq:var-definition}. Namely, their measure takes the form:
\[
  \widetilde{\cvaro}_\alpha[\tilde{x}]
  =
   \inf_{z\in\mathbb R}\; \left(z + (1-\alpha)^{-1} \E\left[\tilde{x}-z\right]_{+} \right)
   =  -\cvar{1-\alpha}{-\tilde{x}},
 \]
 which is not a coherent risk measure.

\section{EVaR: Decomposition Fails for Policy Evaluation} \label{sec:EVaR}

In this section. we show that a decomposition for EVaR proposed in \citet{Ni2022} is inexact even when considering the policy evaluation setting. \Citet{Ni2022} recently proposed a decomposition of EVaR for a fixed $\pi\in \Pi$ with $\tilde{a} \sim \bm{\pi}(\tilde{s})$ and a risk level $\alpha\in (0,1]$ as 
\begin{equation} \label{eq:evar-dec-fixed}
  \evar{\alpha}{r(\tilde{s}, \tilde{a}, \tilde{s}') }
  \quad  \stackrel{??}{=}\quad
  \min_{\bm{\xi} \in \ZetaE} \; \sum_{s \in \mathcal{S}} \xi_s  \evar{\alpha\xi_s \hat{p}_s^{-1}}{r(s, \tilde{a}, \tilde{s}') \mid  \tilde{s} = s} \,,
\end{equation}
where $\tilde{s} \sim \bm{\hat{p}}$, $\tilde{s}' \sim p(\tilde{s}, \tilde{a}, \cdot)$, and
\begin{equation} \label{eq:xi-evar-definition}
\ZetaE = \Big\{ \bm{\xi} \in \probs{ \mathcal S } \mid \sum_{s\in \mathcal{S}} \xi_s \log(\xi_s / \hat{p}_s) \le  - \log \alpha \,, \overbrace{ \alpha\cdot\bm{\xi} \le  \bm{\hat{p}}}^{\text{implicit in~\citet{Ni2022}}}\Big\} \,.
\end{equation}
Note that we use variables $\xi_s = z_s \hat{p}_s$ in comparison with $z_s$ in \citet{Ni2022}.

The constraint $\alpha\cdot\bm{\xi} \le \bm{\hat{p}}$ in~\eqref{eq:xi-evar-definition} was not stated explicitly in \citet{Ni2022} but is necessary because $\evar{\alpha'}{\cdot}$ is defined only for $\alpha'\in [0,1]$. 
When $\alpha' = \alpha\xi_s \hat{p}_s^{-1}$ in~\eqref{eq:evar-dec-fixed} it must also satisfy for each $s\in \mathcal{S}$ that
\begin{equation*}
  \alpha' \le  1
  \quad \Leftrightarrow \quad 
  \alpha \xi_s \hat{p}_s^{-1} \le 1
  \quad \Leftrightarrow \quad 
  \alpha\cdot \xi_s \le   \hat{p}_s\,.
\end{equation*}
This additional constraint on $\bm{\xi}$ implies that $\ZetaE \subseteq\ZetaC$, for the $\ZetaC$ defined in~\eqref{eq:xi-definition}.

We claim in the following theorem (see \cref{app:proofEVaRWrong} for the proof) that the equality in~\eqref{eq:evar-dec-fixed} does not hold even in the policy evaluation setting.

\begin{theorem} \label{thm:evar-wrong}
There exists an MDP with a single action and $\alpha\in (0,1]$ such that
\begin{equation} \label{eq:evar-false}
 \evar{\alpha}{r(\tilde{s}, a_1,\tilde{s}')}
\quad < \quad 
\min_{\bm{\xi} \in \ZetaE} \;  \sum_{s\in \mathcal{S}}  \xi_s   \evar{\alpha \xi_s \hat{p}_s^{-1}}{r(s, a_1, \tilde{s}') \mid  \tilde{s} = s}  \,,
\end{equation}
the set $\ZetaE$ defined by~\eqref{eq:xi-evar-definition}.
\end{theorem}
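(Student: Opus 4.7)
The plan is to exhibit a minimal counterexample: a two-state MDP with a single action $a_1$ and uniform initial distribution $\bm{\hat{p}} = (\nicefrac{1}{2},\nicefrac{1}{2})$, in which one state has a deterministic reward while the other has a genuinely random reward. At $\alpha = \nicefrac{1}{2}$, I would evaluate both sides of~\eqref{eq:evar-false} via the KL-divergence dual representation of EVaR in~\eqref{eq:evar-def-app}.

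The structural reason a gap must appear is the chain rule for KL. Any joint perturbation $\bm{\mu}$ of $q_{s,s'} = \hat{p}_s\,p(s,a_1,s')$ factors as $\mu_{s,s'} = \xi_s\,\eta_{s,s'}$, with $\bm{\xi}$ the marginal and $\bm{\eta}_s$ the conditional over successors given $s$, and a direct computation yields
\[
  \kl(\bm{\mu}\|\bm{q}) \;=\; \kl(\bm{\xi}\|\bm{\hat{p}}) + \sum_{s\in \mathcal{S}} \xi_s\,\kl(\bm{\eta}_s\|\bm{p}_{s,a_1}).
\]
So the joint EVaR constraint $\kl(\bm{\mu}\|\bm{q}) \le -\log\alpha$ can trade KL budget freely between the marginal and the conditionals. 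The decomposition, by contrast, imposes the \emph{per-state} cap $\kl(\bm{\eta}_s\|\bm{p}_{s,a_1}) \le -\log(\alpha\xi_s/\hat{p}_s)$; multiplying by $\xi_s$ and summing recovers exactly $-\log\alpha - \kl(\bm{\xi}\|\bm{\hat{p}})$, so every decomposition-feasible $(\bm{\xi},\bm{\eta})$ is also EVaR-feasible. This already proves that the RHS of~\eqref{eq:evar-false} dominates the LHS; strict inequality follows whenever pooling conditional KL budget onto a single state strictly lowers the objective.

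To force that pooling, I would let $s_1$ have a deterministic transition with reward $0$ and $s_2$ transition equiprobably to two successors with rewards $-1$ and $+1$. Any KL budget spent perturbing the conditional out of $s_1$ is wasted because its reward is constant. The LHS then reduces to a one-dimensional problem: kill the dual mass on the $+1$ outcome and determine the largest $b^\star$ satisfying $(1-b)\log(2(1-b)) + b\log(4b) \le \log 2$, giving a LHS equal to $-b^\star$. On the RHS, $\evar{\alpha\xi_1/\hat{p}_1}{0} = 0$ trivially, so the decomposition collapses to minimizing $\xi_2\cdot\evar{\xi_2}{\tilde{x}}$ over $\xi_2 \in [0,1]$, where $\tilde{x}$ takes the values $\pm 1$ uniformly; this is handled by the closed form for EVaR of a two-point random variable under a fixed KL cap.

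The main obstacle will be bookkeeping: the KL feasibility equations are transcendental in the perturbation parameters, so I would either certify the strict inequality via explicit numerical bounds or tune the rewards and risk level so that the feasibility constraints admit clean closed-form solutions. The argument structure is robust, however: the chain-rule identity pins the direction of the inequality down unconditionally, and the deterministic-versus-random pairing of the two states guarantees strictness at the chosen parameters.
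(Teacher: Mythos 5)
Your approach is correct but takes a genuinely different route from the paper's. The paper's counterexample makes \emph{both} conditional rewards deterministic ($r(s_1,a_1,\cdot)=1$, $r(s_2,a_1,\cdot)=0$, uniform $\bm{\hat{p}}$, $\alpha=0.75$), so every conditional EVaR collapses to a constant, the right-hand side of~\eqref{eq:evar-false} becomes a linear minimization over $\ZetaE$, and since $\ZetaE\subseteq\ZetaC$ it is lower-bounded by $\cvar{\alpha}{r(\tilde{s},a_1,\tilde{s}')}$; strictness then follows from the purely qualitative observation that the CVaR-optimal dual point lies in the relative interior of the KL ball, so $\evaro_\alpha<\cvaro_\alpha$ strictly for that two-point distribution, with no transcendental equations to solve. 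Your example instead puts the interesting randomness in the conditional at $s_2$, and the gap you exploit is a different mechanism: the per-state caps $\kl(\bm{\eta}_s\|\bm{p}_{s,a_1})\le-\log(\alpha\xi_s/\hat{p}_s)$ cannot pool KL budget across states, so budget is wasted on the deterministic state $s_1$. (In the paper's example that mechanism is absent; there the gap is driven entirely by the box constraint $\alpha\cdot\bm{\xi}\le\bm{\hat{p}}$ in $\ZetaE$, which one might dismiss as an artifact of keeping the conditional risk levels well-defined, so your example arguably isolates a more intrinsic defect.) Your chain-rule argument for the weak inequality is correct and amounts to an independent derivation of \cref{cor:evar-decomp-specific}. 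Two caveats. First, killing the dual mass on the $+1$ outcome does not compute the LHS exactly --- the optimal exponentially tilted distribution retains some mass there --- so $-b^\star$ is only an upper bound on the LHS; this is harmless because an upper bound is exactly what you need for the strict inequality, but you should state it as such. Second, the strictness is not yet certified: the numbers do work out (the LHS is below $-0.77$ while the decomposition's minimum is roughly $-0.55$ at $\xi_{s_2}\approx 0.65$), but as you anticipate this requires explicit numerical bounds, whereas the paper's choice of example sidesteps the computation entirely.
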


\cref{thm:evar-wrong} demonstrates a stronger failure mode than the one in~\cref{thm:cvar-wrong} (for CVaR policy optimization) since it applies to both policy evaluation and policy optimization settings. 

We propose a correct decomposition of EVaR in the following theorem and employ it to establish that the decomposition in~\eqref{eq:evar-dec-fixed}  overestimates the actual value of EVaR (see \cref{app:proofEVaRDecomp} for a proof).
\begin{theorem}\label{cor:evar-decomp-specific}
Given any finite MDP with horizon $T=1$ and $\alpha\in(0,1]$, we have that
\[
  \evar{\alpha}{r(\tilde{s}, \tilde{a}, \tilde{s}') }
  \quad  =\quad
  \inf_{\bm{\zeta}\in (0,\,1]^{S},\,\bm{\xi} \in \ZetaE'(\bm{\zeta}) } \; \sum_{s \in \mathcal{S}} \xi_s  \evar{\zeta_s}{r(s, \tilde{a}, \tilde{s}') \mid  \tilde{s} = s} \,,
\]
where
\[ 
  \ZetaE'(\bm{\zeta}) = \Big\{ \bm{\xi} \in \probs{ S } \mid \bm{\xi} \ll \bm{\hat{p}}, \; \sum_{s\in \mathcal{S}} \xi_s \big(\log(\xi_s/ \hat{p}_s)-\log(\zeta_s)\big) \le  - \log \alpha \Big\} \,.
\]
Moreover, EVaR can be upper-bounded as
\begin{equation} \label{eq:evar-dec-fixed-0}
  \evar{\alpha}{r(\tilde{s}, \tilde{a}, \tilde{s}') }
  \quad  \leq\quad
  \min_{\bm{\xi} \in \ZetaE} \; \sum_{s \in \mathcal{S}} \xi_s  \evar{\alpha\xi_s \hat{p}_s^{-1}}{r(s, \tilde{a}, \tilde{s}') \mid  \tilde{s} = s} \,.
\end{equation}
\end{theorem}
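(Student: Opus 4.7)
The plan is to derive the decomposition directly from the dual (KL-ball) definition of EVaR in \eqref{eq:evar-def-app} together with the chain rule for the KL-divergence. With horizon $T=1$ and $\tilde a\sim\bm\pi(\tilde s)$, the reward $r(\tilde s,\tilde a,\tilde s')$ lives on the finite outcome space $\mathcal S\times\mathcal A\times\mathcal S$ under the reference distribution $q(s,a,s'):=\hat p_s\,\pi(s,a)\,p(s,a,s')$. The dual definition then rewrites the left-hand side as
\[
  \evar{\alpha}{r(\tilde s,\tilde a,\tilde s')}
  \;=\;
  \inf_{\eta}\;\sum_{s,a,s'}\eta(s,a,s')\,r(s,a,s'),
\]
where the infimum is over distributions $\eta$ on $\mathcal S\times\mathcal A\times\mathcal S$ with $\eta\ll q$ and $\kl(\eta\,\|\,q)\le-\log\alpha$.

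I would then factor any such $\eta$ as $\eta(s,a,s')=\xi_s\,\bar\eta(a,s'\mid s)$, where $\xi_s$ is the $\mathcal S$-marginal of $\eta$ and $\bar\eta(\cdot\mid s)$ is the conditional on $\mathcal A\times\mathcal S$ (defined arbitrarily on states with $\xi_s=0$, the $0\log 0$ convention absorbing the resulting terms). The chain rule yields the clean identity
\[
  \kl(\eta\,\|\,q)
  \;=\;
  \kl(\bm\xi\,\|\,\bm{\hat p})
  \;+\;\sum_{s\in\mathcal S}\xi_s\,\kl\!\bigl(\bar\eta(\cdot\mid s)\,\big\|\,q(\cdot\mid s)\bigr),
\]
where $q(\cdot\mid s)=\pi(s,\cdot)\,p(s,\cdot,\cdot)$. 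Introducing a slack variable $\zeta_s\in(0,1]$ with $\kl(\bar\eta(\cdot\mid s)\,\|\,q(\cdot\mid s))\le-\log\zeta_s$ converts the global constraint $\kl(\eta\,\|\,q)\le-\log\alpha$ into exactly the defining inequality of $\bm\xi\in\ZetaE'(\bm\zeta)$.

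For the direction $\ge$ of the equality I would start from any feasible $\eta$ and set $\zeta_s:=\exp\!\bigl(-\kl(\bar\eta(\cdot\mid s)\,\|\,q(\cdot\mid s))\bigr)\in(0,1]$, which makes each per-state KL constraint tight, and then bound $\sum_{a,s'}\bar\eta(a,s'\mid s)\,r(s,a,s')\ge\evar{\zeta_s}{r(s,\tilde a,\tilde s')\mid\tilde s=s}$ by applying the dual characterization state by state. For the direction $\le$ I would, given any feasible $(\bm\xi,\bm\zeta)$, pick for each $s$ a minimizer $\bar\eta(\cdot\mid s)$ of the $s$-conditional EVaR at level $\zeta_s$ (attained because the feasible set is compact and the objective linear in a finite MDP), glue them into $\eta(s,a,s')=\xi_s\bar\eta(a,s'\mid s)$, and verify via the same chain rule that $\kl(\eta\,\|\,q)\le-\log\alpha$, so $\eta$ is feasible for the joint problem and achieves the decomposed objective.

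For the upper bound \eqref{eq:evar-dec-fixed-0} I would specialize the joint infimum to the explicit choice $\zeta_s:=\alpha\xi_s/\hat p_s$. This $\bm\zeta$ lies in $(0,1]^S$ exactly under the constraint $\alpha\bm\xi\le\bm{\hat p}$ carried in $\ZetaE$, and a one-line computation collapses $\sum_s\xi_s\bigl(\log(\xi_s/\hat p_s)-\log\zeta_s\bigr)$ to $-\log\alpha$, so every $\bm\xi\in\ZetaE$ produces a feasible pair whose objective is the right-hand side of \eqref{eq:evar-dec-fixed-0}. The main obstacle I anticipate is the careful bookkeeping around states with $\xi_s=0$, where conditional distributions are undefined and $-\log\zeta_s$ may blow up; keeping the chain-rule identity and the attainment-of-minimizer arguments valid at these boundary configurations is the step most likely to demand care, although the standing assumption $\hat p_s>0$ removes one potential source of pathology.
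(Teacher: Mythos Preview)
Your proposal is correct and follows essentially the same approach as the paper: both arguments start from the dual (KL-ball) definition of EVaR, apply the chain rule for KL-divergence to split the joint constraint into a marginal part $\kl(\bm\xi\|\bm{\hat p})$ plus a $\bm\xi$-weighted sum of conditional KL terms, introduce the slack variables $\zeta_s$ bounding the conditional KLs, and then obtain the upper bound~\eqref{eq:evar-dec-fixed-0} by specializing to $\zeta_s=\alpha\xi_s\hat p_s^{-1}$. The only cosmetic difference is that the paper writes the derivation as a single chain of equalities (with an interchangeability-style step replacing the inner $\bar\eta(\cdot\mid s)$ optimization by the conditional EVaR), whereas you unpack it into separate $\ge$ and $\le$ directions; your anticipated boundary issue at $\xi_s=0$ is exactly the one the paper handles with the $0\log 0$ convention and the continuity $\evar{0}{\tilde x}=\lim_{\alpha\to 0}\evar{\alpha}{\tilde x}$.
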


\section{VaR: Decomposition Holds for Policy Evaluation and Optimization} \label{sec:VaR}

In this section, we discuss a dynamic program decomposition for VaR whose decomposition resembles those for CVaR and EVaR described in \cref{sec:CVaR,sec:EVaR}. We provide a new proof of the VaR decomposition to elucidate the differences that make it optimal in contrast to CVaR and EVaR decompositions. Our VaR decomposition closely resembles the quantile MDP approach in~\citet{Li2022} with a few technical modifications that can significantly impact the computed value.

To contrast the typical definition of VaR with the quantile definition in \citet{Li2022}, it is helpful to summarize how VaR is related to the quantile of a random variable. Let $\quant\in \Real$ define as the \emph{$\alpha$-quantile} of $\tilde{x}\in \mathbb{X}$ when 
\begin{equation} \label{eq:quantile-condition}
\P{\tilde{x} \le \quant} \ge \alpha
\quad \text{and} \quad
\P{\tilde{x} < \quant} \le  \alpha \;.
\end{equation}  
In general, the set of quantiles is an interval $[\quant^-_{\tilde{x}}(\alpha), \quant^+_{\tilde{x}}(\alpha)]$ with the bounds computed as~\cite[appendix~A.3]{Follmer2016}
\begin{align*}
\quant^-_{\tilde{x}}(\alpha) &\; =\;  \sup\left\{ z \ss \Pr{\tilde{x} <  z} < \alpha\right\} = \inf\left\{z \ss \Pr{ \tilde{x} \le z } \ge \alpha\right\}, \\
\quant^+_{\tilde{x}}(\alpha) &\; =\;  \inf \left\{ z \ss \Pr{  \tilde{x} \le z  } > \alpha\right\} = \sup \left\{z \ss \Pr{  \tilde{x} <  z  } \le  \alpha\right\}.
\end{align*}
Note that when the distribution of $\tilde{x}$ is absolutely continuous (atomless), then $\quant^+_{\tilde{x}}(\alpha) = \quant^-_{\tilde{x}}(\alpha)$ and the quantile is unique. The following example illustrates a simple setting in which the quantile is not unique.

\begin{example}[Bernoulli random variable] \label{ex:bernoulli}
Consider a Bernoulli random variable $\tilde{e}$ such that $\tilde{e} = 1$ and $\tilde{e} = 0$ with equal (50\%) probabilities. Then, any value $q\in [0,1]$ is a valid $0.5$-quantile because
\begin{align*}
  \quant^-_{\tilde{e}}(0.5) &= \inf_{z\in\mathbb R}\; \left\{z \mid  \Pr{ \tilde{e}\leq z } \geq 0.5\right\}=\inf_{z\in\mathbb R}\; \left\{z \mid z\geq 0\right\} = 0 \\
  \quant^+_{\tilde{e}}(0.5) &= \sup_{z\in\mathbb R}\; \left\{z \mid  \Pr{ \tilde{e}\geq z } \geq 0.5\right\}=\sup_{z\in\mathbb R}\; \left\{z \mid  z\leq 1\right\} = 1.
\end{align*}
\end{example}

The objective in \citet{Li2022} is to maximize the quantile operator $Q_{\alpha} \colon \mathbb{X}\to \Real$ defined for a reward random variable $\tilde{x}\in \mathbb{X}$ and a risk level $\alpha\in [0,1]$ as
\begin{equation}
\label{eq:qmdp-defn}
  Q_\alpha(\tilde{x})
  \; =\;
  \inf_{z\in\mathbb R}\; \left\{z \mid  \Pr{ \tilde{x}\leq z } \geq \alpha\right\}\,.
\end{equation}
The quantile operator $Q_{\alpha}$ and VaR differ in which quantile of the random variable they consider:
\begin{equation} \label{eq:var-as-quantile}
  Q_{\alpha}(\tilde{x}) \; =\; \quant^-_{\tilde{x}}(\alpha), \qquad \text{but} \qquad 
  \var{\alpha}{\tilde{x}} \; =\; \quant^+_{\tilde{x}}(\alpha)\,.
\end{equation}
As a result, the quantile MDP objective in~\eqref{eq:qmdp-defn} coincides with
the VaR value only when the quantile is unique, which is not always the case, as shown in~\Cref{ex:bernoulli}.

\begin{theorem}\label{thm:var:decomp}
Let $\tilde{y}\colon  \Omega \to [N]$ be a random variable distributed as $\bm{\hat{p}} = (\hat{p}_i)_{i=1}^N$ with $\hat{p}_i > 0$. Then for any random variable $\tilde{x}\in \mathbb{X}$, we have
\begin{equation} \label{eq:var-decomposition}
  \var{\alpha}{\tilde{x}}
  \quad =\quad
  \sup_{\bm{\zeta}\in\probs{ N }} \left\{   \min_i\; \var{\alpha \zeta_i \hat{p}_i^{-1}}{\tilde{x}\mid \tilde{y} = i}  \mid  \alpha \cdot \bm{\zeta} \le \bm{\hat{p}} \right\}\,,
\end{equation}
where we interpret the minimum to evaluate to $\infty$ if all terms are infinite, which only occurs if $\alpha =1$. 
\end{theorem}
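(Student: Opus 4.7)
The plan is to prove both inequalities in~\eqref{eq:var-decomposition} separately using the primal definition $\var{\alpha}{\tilde{x}}=\sup\{z\in\Real : \Pr{\tilde{x}<z}\le\alpha\}$ from~\eqref{eq:var-definition}. The key technical fact I would establish first is that this supremum is \emph{attained} whenever it is finite: since $z\mapsto\Pr{\tilde{x}<z}$ is left-continuous, the set $\{z:\Pr{\tilde{x}<z}\le\alpha\}$ is closed from above, so $v^\star=\var{\alpha}{\tilde{x}}$ itself satisfies $\Pr{\tilde{x}<v^\star}\le\alpha$. This attainment, absent from the dual/robust representations of CVaR and EVaR, is precisely what lets the decomposition close without any saddle-point argument and explains why VaR escapes the optimality gap exhibited in \cref{sec:CVaR,sec:EVaR}.

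For the direction $\mathrm{RHS}\le\mathrm{LHS}$, I would fix any feasible $\bm{\zeta}\in\probs{N}$ with $\alpha\,\bm{\zeta}\le\bm{\hat{p}}$, set $z^\star=\min_i \var{\alpha\zeta_i\hat{p}_i^{-1}}{\tilde{x}\mid\tilde{y}=i}$ (assuming it is finite; otherwise $\alpha=1$ and both sides equal $+\infty$ by the stated convention), and apply the sup-attainment fact conditionally to get $\Pr{\tilde{x}<z^\star\mid\tilde{y}=i}\le\alpha\zeta_i\hat{p}_i^{-1}$ for every $i$ (using monotonicity of $z\mapsto\Pr{\tilde{x}<z\mid\tilde{y}=i}$ to push from $v_i$ down to $z^\star\le v_i$). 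The law of total probability then gives
\[
\Pr{\tilde{x}<z^\star}\;=\;\sum_{i=1}^N \hat{p}_i\,\Pr{\tilde{x}<z^\star\mid\tilde{y}=i}\;\le\;\sum_{i=1}^N \hat{p}_i\cdot\frac{\alpha\zeta_i}{\hat{p}_i}\;=\;\alpha,
\]
so $z^\star\le\var{\alpha}{\tilde{x}}$, and taking the supremum over $\bm{\zeta}$ yields the inequality.

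For the direction $\mathrm{LHS}\le\mathrm{RHS}$ (assuming $\alpha\in(0,1)$; the cases $\alpha\in\{0,1\}$ are handled by inspection as noted below), I would \emph{construct} an explicit witness. Let $v^\star=\var{\alpha}{\tilde{x}}$ and define the candidate $\zeta_i^\circ=\hat{p}_i\Pr{\tilde{x}<v^\star\mid\tilde{y}=i}/\alpha$. By total probability together with the attainment bound $\Pr{\tilde{x}<v^\star}\le\alpha$, one gets $\sum_i\zeta_i^\circ\le 1$, and $\zeta_i^\circ\le\hat{p}_i/\alpha$ follows from the trivial bound $\Pr{\cdot}\le 1$. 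I would then distribute the slack $1-\sum_i\zeta_i^\circ\ge 0$ greedily across the coordinates while respecting the caps $\zeta_i\le\hat{p}_i/\alpha$; this is always feasible because the total cap budget $\sum_i \hat{p}_i/\alpha=1/\alpha\ge 1$. For the resulting $\bm{\zeta}\ge\bm{\zeta}^\circ$, the inequality $\Pr{\tilde{x}<v^\star\mid\tilde{y}=i}\le\alpha\zeta_i/\hat{p}_i$ places $v^\star$ in the defining set of $\var{\alpha\zeta_i\hat{p}_i^{-1}}{\tilde{x}\mid\tilde{y}=i}$, so the latter is $\ge v^\star$ for every $i$, hence the min and \emph{a fortiori} the sup is at least $v^\star$.

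The main obstacle I anticipate is the careful bookkeeping around the bi-implication $\var{\beta}{\tilde{x}\mid\tilde{y}=i}\ge v^\star \Leftrightarrow \Pr{\tilde{x}<v^\star\mid\tilde{y}=i}\le\beta$, which is invoked in both directions and ultimately rests on sup-attainment; it is precisely this step that has no analogue for CVaR or EVaR, whose representations in~\eqref{eq:cvar-definition} and~\eqref{eq:evar-def-app} introduce additional infima and KL constraints that do not separate cleanly under conditioning on $\tilde{y}$. The two edge cases are then immediate: at $\alpha=1$ the constraint $\alpha\bm{\zeta}\le\bm{\hat{p}}$ together with $\bm{\zeta}\in\probs{N}$ forces $\bm{\zeta}=\bm{\hat{p}}$ and both sides equal $+\infty$, while at $\alpha=0$ the RHS does not depend on $\bm{\zeta}$ and reduces to $\min_i \ess\inf(\tilde{x}\mid\tilde{y}=i)=\ess\inf\tilde{x}$, which coincides with $\var{0}{\tilde{x}}$ because $\hat{p}_i>0$ for every $i$.
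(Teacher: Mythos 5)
Your proof is correct and rests on the same ingredients as the paper's: the primal sup-definition of $\varo_\alpha$, the law of total probability with auxiliary bounds $\zeta_i$ on the conditional tail probabilities $\Pr{\tilde{x}<z\mid\tilde{y}=i}$, and the left-continuity (hence attainment) of the defining supremum. The only organizational difference is that you split the identity into two inequalities with an explicit witness $\bm{\zeta}^{\circ}$, whereas the paper proves a single chain of set equalities; your slack-distribution step also makes explicit the minor passage from $\sum_i\zeta_i\hat{p}_i\le\alpha$ to the simplex constraint of the statement, which the paper leaves implicit.
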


\begin{proof}
We decompose VaR using the definition in~\eqref{eq:var-definition} as 
\begin{align*}
  \var{\alpha}{\tilde{x}}
  &= \sup \; \left\{z \in \Real \mid \Pr{ \tilde{x} < z } \le  \alpha\right\}
  \overset{\textrm{(a)}}{=} \sup\; \left\{z\in \Real  \mid \sum_{i=1}^N \Pr{ \tilde{x}< z \mid \tilde{y}=i }\cdot \hat{p}_i \leq \alpha\right\}\\
  &\overset{\textrm{(b)}}{=} \sup\; \left\{z \in \Real \mid \bm{\zeta}\in[0,1]^N, \;  \Pr{ \tilde{x} < z \mid \tilde{y}=i } \leq \zeta_i, \forall i \in [N] , \; \sum_{i=1}^N \zeta_i\hat{p}_i \le \alpha\right\}\\  
  &\overset{\textrm{(c)}}{=} \sup \left\{  \sup\; \left\{z \in \Real \mid   \Pr{ \tilde{x} < z \mid \tilde{y}=i } \leq \zeta_i , \;\forall i \in [N] \right\} \mid  \bm{\zeta}\in[0,1]^N, \; \sum_{i=1}^N \zeta_i\hat{p}_i \le \alpha\right\}  \\  
  &= \sup \left\{  \sup\; \bigcap_{i\in [N]}\left\{z \in \Real \mid   \Pr{ \tilde{x} < z \mid \tilde{y}=i } \leq \zeta_i \right\} \mid  \bm{\zeta}\in[0,1]^N, \; \sum_{i=1}^N \zeta_i\hat{p}_i \le \alpha\right\}  \\  
  &\overset{\textrm{(d)}}{=} \sup \left\{  \min_{i\in [N]} \sup  \; \left\{z \in \Real \mid   \Pr{ \tilde{x} < z \mid \tilde{y}=i } \leq \zeta_i  \right\} \mid  \bm{\zeta}\in[0,1]^N, \; \sum_{i=1}^N \zeta_i\hat{p}_i \le \alpha\right\}  \\  
 &\overset{\textrm{(e)}}{=} \sup \; \left\{\min_{i\in [N]}\; \var{\zeta_i}{\tilde{x} \mid \tilde{y}=i} \mid \bm{\zeta}\in[0,1]^N, \, \sum_{i=1}^N \zeta_i \hat{p}_i \leq \alpha\right\}.
\end{align*}
We decompose the probability $\Pr{ \tilde{x} < z }$ as a marginal of the conditional probabilities $\Pr{ \tilde{x} < z \mid \tilde{y}=i}$ and $\hat{p}_i = \Pr{\tilde{y} = i}$ in step (a) and then lower-bound them by an auxiliary variable $\zeta_i$ in step (b). In step (c) we replace the joint supremum over $z$ and $\bm{\zeta}$ by sequential suprema, and then we replace the supremum of an intersection by the minimum of the suprema of sets in (d). The equality in (d) holds because $\Pr{ \tilde{x} < z}$ is monotone and, therefore, the sets $\left\{z \in \Real \mid   \Pr{ \tilde{x} < z \mid \tilde{y}=i } \leq \zeta_i \right\}$ are nested. Finally, step (e) follows from the definition of VaR in~\eqref{eq:var-definition}.
\end{proof}

Focusing on the finite MDP with horizon $T=1$, we can show that the decomposition proposed in \cref{thm:var:decomp} is amenable to policy optimization. The main difference between the VaR decomposition and CVaR is that the former VaR was expressed as a 
supremum instead of an infimum over quantile levels $\bm{\zeta}$. For VaR, changing the order of maximum ($\pi$) and supremum ($\bm{\zeta}$) does not suffer from a potential gap, but changing the order of maximum ($\pi$) and infimum/minimum ($\bm{\zeta}$) in CVaR does suffer from such a gap as shown in \cref{thm:cvar-wrong}.

The following theorem (proved in \cref{sec:proof-var-decomp}) summarizes the decomposition for VaR.
\begin{theorem} \label{thm:var-decomp-opt}
Given any finite MDP with horizon $T=1$ and $\alpha\in[0,1]$, we have 
\begin{equation*}
\max_{\pi\in \Pi} \; \varo_{\alpha}^{\tilde{a}\sim\bm{\pi}(\tilde{s})}[{r(\tilde{s}, \tilde{a}, \tilde{s}')}] = \sup_{\bm{\zeta}\in\probs{S}} \left\{  \min_{s\in\mathcal{S}}\, \max_{\bm{d}\in \probs{ A }} \varo_{\alpha \zeta_s \hat{p}_s^{-1}}^{\tilde{a} \sim \bm{d}}\big[{r(s, \tilde{a}, \tilde{s}')}\mid \tilde{s}=s\big]  \mid  \alpha\cdot \bm{\zeta} \leq \bm{\hat{p}}\right\}.
\end{equation*}
\end{theorem}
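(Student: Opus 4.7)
The plan is to apply \cref{thm:var:decomp} to the inner VaR for each fixed policy, and then exploit the product structure of the policy space $\Pi$ to swap two operators---both swaps being unconditional---thereby avoiding the saddle-point pitfall that sinks the CVaR and EVaR decompositions.

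First, I would fix an arbitrary $\pi\in\Pi$ and invoke \cref{thm:var:decomp} with $\tilde{x} = r(\tilde{s},\tilde{a},\tilde{s}')$ (under $\tilde{a}\sim\bm{\pi}(\tilde{s})$ and $\tilde{s}'\sim\bm{p}_{\tilde{s},\tilde{a}}$) and $\tilde{y}=\tilde{s}\sim\bm{\hat{p}}$. Since $\hat{p}_s>0$ for every $s$, the hypotheses are satisfied and the theorem yields the pointwise identity
\[
\varo_{\alpha}^{\tilde{a}\sim\bm{\pi}(\tilde{s})}\big[r(\tilde{s},\tilde{a},\tilde{s}')\big] = \sup_{\bm{\zeta}\in\probs{S},\,\alpha\bm{\zeta}\leq\bm{\hat{p}}}\;\min_{s\in\mathcal{S}}\;\varo_{\alpha\zeta_s\hat{p}_s^{-1}}^{\tilde{a}\sim\bm{\pi}(s)}\big[r(s,\tilde{a},\tilde{s}')\mid\tilde{s}=s\big].
\]
Taking $\max_{\pi\in\Pi}$ on both sides, the outer $\sup_{\bm{\zeta}}$ and $\max_{\pi}$ commute unconditionally, because they are maximizations over disjoint variables.

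The crux is then the inner swap of $\max_{\pi\in\Pi}$ with $\min_{s\in\mathcal{S}}$. Writing $f_s(\bm{d}) := \varo_{\alpha\zeta_s\hat{p}_s^{-1}}^{\tilde{a}\sim\bm{d}}[r(s,\tilde{a},\tilde{s}')\mid\tilde{s}=s]$ for $\bm{d}\in\probs{A}$, the summand depends on $\pi$ only through $\bm{\pi}(s)$; and since $\Pi$ factors as $\prod_{s\in\mathcal{S}}\probs{A}$, the action distribution $\bm{\pi}(s)$ can be chosen independently at every state. A short argument then gives
\[
\max_{\pi\in\Pi}\,\min_{s\in\mathcal{S}} f_s(\bm{\pi}(s)) = \min_{s\in\mathcal{S}}\,\max_{\bm{d}\in\probs{A}} f_s(\bm{d}),
\]
where $\leq$ is the weak max-min inequality, and $\geq$ is witnessed by choosing $\bm{\pi}^*(s)\in\arg\max_{\bm{d}\in\probs{A}} f_s(\bm{d})$ independently in each state. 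Stringing the three steps together yields exactly the right-hand side of the theorem.

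I do not anticipate a serious technical obstacle; the conceptual heart of the result is precisely this contrast with CVaR and EVaR. Because \cref{thm:var:decomp} expresses VaR as a \emph{supremum} rather than an infimum, both operator interchanges in the proof are max-max and max-min swaps, both trivially valid (the latter thanks to the product structure of $\Pi$). The CVaR and EVaR decompositions, by contrast, involve an infimum, so combining them with $\max_{\pi}$ forces a min-max swap that requires a saddle point, which can fail as demonstrated by \cref{thm:cvar-wrong}. The only degenerate case to mention is $\alpha = 1$: both sides equal $+\infty$---on the right, by taking $\bm{\zeta}=\bm{\hat{p}}$ so that each $\alpha\zeta_s\hat{p}_s^{-1}=1$---consistent with the convention adopted in \cref{thm:var:decomp}.
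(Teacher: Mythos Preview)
Your proposal is correct and follows essentially the same three-step structure as the paper's proof: apply \cref{thm:var:decomp} pointwise in $\pi$, commute $\max_{\pi}$ with $\sup_{\bm{\zeta}}$, and then interchange $\max_{\pi}$ with $\min_{s}$. The only cosmetic difference is that the paper justifies the last swap by citing an interchangeability principle for the maximum operation, whereas you spell out the product-structure argument directly; these are equivalent.
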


For completeness, Appendix \ref{app:LongerHorizon} extends Theorem \ref{thm:var-decomp-opt} to a setting with horizon $T>1$, providing dynamic programming equations and a definition of the optimal policy. These equations are analogous to the equations presented in \cite{Li2022} yet we obtain them using the accurate definition and decomposition of $\varo_\alpha$.

We finally present below a valid decomposition for the lower quantile MDP, used officially as the objective in \cite{Li2022} (see \cref{app:prooflquantDecomp} for a proof).
\begin{proposition} \label{thm:quant-decomp-opt}
Given any finite MDP with horizon $T=1$ and some $\alpha\in[0,1]$, we have that:
\begin{equation*}
  \max_{\pi\in \Pi}  Q^{\tilde{a}\sim\bm{\pi}(\tilde{s})}_\alpha(r(\tilde{s}, \tilde{a}, \tilde{s}') )
 = \sup_{\bm{\zeta}\in[0,1]^S} \left\{\min_{s\in \mathcal{S}: \zeta_s<1}\; \max_{\bm{d}\in \probs{ A }} Q^{\tilde{a} \sim \bm{d}}_{\zeta_s}(r(s, \tilde{a}, \tilde{s}') \mid \tilde{s} = s)  \mid \sum_{s=1}^S \zeta_s \hat{p}_s < \alpha \right\}.
\end{equation*}
\end{proposition}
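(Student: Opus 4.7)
The plan is to first prove an abstract single-stage decomposition for the lower quantile operator $Q_\alpha$ in analogy with \cref{thm:var:decomp}, and then lift it to the MDP setting by exchanging $\max_\pi$ with $\sup_{\bm{\zeta}}$ and decoupling the resulting max--min.

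Concretely, I need the lemma that for any $\tilde{x}\in\mathbb{X}$ and any discrete conditioning variable $\tilde{y}$ distributed as $\bm{\hat{p}}$ with $\hat{p}_i>0$,
\[
  Q_\alpha(\tilde{x}) \;=\; \sup\Bigl\{\min_{i:\zeta_i<1}\, Q_{\zeta_i}(\tilde{x}\mid\tilde{y}=i) \;\Bigm|\; \bm{\zeta}\in[0,1]^N,\; \textstyle\sum_i\zeta_i\hat{p}_i<\alpha\Bigr\}.
\]
A direct replay of the derivation in \cref{thm:var:decomp} with $\Pr{\tilde{x}\le z}$ in place of $\Pr{\tilde{x}<z}$ would produce the upper quantile $\var{\zeta_i}{\tilde{x}\mid\tilde{y}=i}$ on the right-hand side rather than the lower quantile $Q_{\zeta_i}$, so I instead prove this identity by two opposite inequalities, writing $F_i(z):=\Pr{\tilde{x}\le z\mid\tilde{y}=i}$ and $F(z):=\sum_i F_i(z)\hat{p}_i$.

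For the $\le$ direction, I fix any feasible $\bm{\zeta}$ and any $z'$ below $z^*:=\min_{i:\zeta_i<1}Q_{\zeta_i}(\tilde{x}\mid\tilde{y}=i)$. By definition of $Q_{\zeta_i}$, $z'<Q_{\zeta_i}$ forces $F_i(z')<\zeta_i$ for every $i$ with $\zeta_i<1$, while the trivial bound $F_i(z')\le 1=\zeta_i$ covers the excluded indices. Marginalizing gives $F(z')\le\sum_i\zeta_i\hat{p}_i<\alpha$, hence $z'<Q_\alpha(\tilde{x})$ and therefore $z^*\le Q_\alpha(\tilde{x})$. For the reverse direction I fix $z'<Q_\alpha(\tilde{x})$, which implies $F(z')<\alpha$, and construct $\bm{\zeta}$ by setting $\zeta_i=\min\bigl(F_i(z')+\epsilon,\,1\bigr)$ for any $\epsilon\in(0,\alpha-F(z'))$. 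The naive choice $\zeta_i=F_i(z')$ would be too weak, yielding $Q_{\zeta_i}\le z'$; the perturbation instead makes $F_i(z')<\zeta_i$ strictly whenever $\zeta_i<1$, which combined with the right-continuity of $F_i$ forces $Q_{\zeta_i}>z'$. Since the budget $\sum_i\zeta_i\hat{p}_i\le F(z')+\epsilon<\alpha$ is preserved, the supremum strictly exceeds every $z'<Q_\alpha(\tilde{x})$ and the inequality closes.

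To lift the lemma to the MDP, I apply it with $\tilde{y}=\tilde{s}\sim\bm{\hat{p}}$ and $\tilde{x}=r(\tilde{s},\tilde{a},\tilde{s}')$ under $\tilde{a}\sim\bm{\pi}(\tilde{s})$, and then take the maximum over $\pi\in\Pi$ on both sides. Exchanging $\max_\pi$ with $\sup_{\bm{\zeta}}$ is immediate because both are suprema. Since a horizon-$1$ policy is simply an independent choice of $\bm{\pi}(s)\in\probs{A}$ for each state and the inner objective depends on $\bm{\pi}(s)$ only through indices with $\zeta_s<1$, the standard max--min decoupling $\max_\pi\min_{s:\zeta_s<1}f_s(\bm{\pi}(s))=\min_{s:\zeta_s<1}\max_{\bm{d}\in\probs{A}}f_s(\bm{d})$ applies and yields the stated identity. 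The main obstacle throughout is the subtle gap between the lower and upper quantile: both the strictness of $\sum_i\zeta_i\hat{p}_i<\alpha$ and the explicit exclusion of indices with $\zeta_i=1$ are essential, since relaxing either collapses the statement onto a VaR-style decomposition and introduces a gap already visible on the Bernoulli example of \cref{ex:bernoulli}.
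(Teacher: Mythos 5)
Your proposal is correct and follows essentially the same route as the paper: the same single-stage decomposition lemma for $Q_\alpha$ (with the strict budget constraint and the exclusion of indices with $\zeta_i=1$), followed by the same interchange of $\max_\pi$ with $\sup_{\bm{\zeta}}$ and the max--min decoupling over states. The only difference is cosmetic: you verify the lemma by two opposite inequalities from the cdf characterization $F_i(z)=\Pr{\tilde{x}\le z\mid\tilde{y}=i}$ with an explicit $\epsilon$-perturbed choice of $\bm{\zeta}$, whereas the paper runs a chain of set equalities from the characterization $Q_\alpha(\tilde{x})=\sup\{z\mid\Pr{\tilde{x}<z}<\alpha\}$; both hinge on the same strictness and right-continuity observations.
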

We note that the difference with the result presented in \cite{Li2022} resides in the constraint imposed on $\bm \zeta$ that replaces the weak inequality with a strict one. In fact, this strict versus weak inequality is the main distinguishing factor between the decompositions for the lower and upper quantiles.

\section{Conclusion}

This paper shows that a popular decomposition approach to solving MDPs with CVaR and EVaR objectives is suboptimal despite the claims to the contrary. This suboptimality arises from a saddle-point gap when \emph{optimizing policy}.  We also prove that a similar decomposition approach is optimal for policy optimization and evaluation when solving MDPs with the VaR objective. The decomposition is optimal because VaR does not involve the same saddle point problem as CVaR and EVaR.

Our findings are significant because practitioners who make risk-averse decisions in high-stakes scenarios need to have confidence in the correctness of the algorithms they use. Our work raises awareness that popular static CVaR and EVaR MDP algorithms are suboptimal, and their analyses are inaccurate. We hope the results we present in our paper will increase the scrutiny of dynamic programming methods for risk-averse MDPs and motivate research into alternative approaches, such as the parametric dynamic programs.

\subsubsection*{Acknowledgments}
We thank Yinlam Chow for his insightful comments that inspired this paper's topic and results. The work in the paper was supported, in part, by NSF grants 2144601 and 2218063. In addition, this work was performed in part while Marek Petrik was a visiting researcher at Google Research. Finally, E. Delage acknowledges the support from the Canadian Natural Sciences and Engineering Research Council and the Canada Research Chair program [Grant RGPIN-2022-05261 and CRC-2018-00105].

\bibliographystyle{plainnat}
\bibliography{counter}

\begin{thebibliography}{38}
\providecommand{\natexlab}[1]{#1}
\providecommand{\url}[1]{\texttt{#1}}
\expandafter\ifx\csname urlstyle\endcsname\relax
  \providecommand{\doi}[1]{doi: #1}\else
  \providecommand{\doi}{doi: \begingroup \urlstyle{rm}\Url}\fi

\bibitem[Ahmadi et~al.(2021)Ahmadi, Xiong, and Ames]{Ahmadi2021}
Mohamadreza Ahmadi, Xiaobin Xiong, and Aaron~D Ames.
\newblock Risk-averse control via {{CVaR}} barrier functions: Application to
  bipedal robot locomotion.
\newblock \emph{IEEE Control Systems Letters}, 6:\penalty0 878--883, 2021.

\bibitem[{Ahmadi-Javid}(2012)]{Ahmadi-Javid2012}
A.~{Ahmadi-Javid}.
\newblock Entropic {{Value-at-Risk}}: {{A}} new coherent risk measure.
\newblock \emph{Journal of Optimization Theory and Applications}, 155\penalty0
  (3):\penalty0 1105--1123, 2012.

\bibitem[Artzner et~al.(1999)Artzner, Delbaen, Eber, and Heath]{artznerCohRisk}
Philippe Artzner, Freddy Delbaen, Jean-Marc Eber, and David Heath.
\newblock Coherent measures of risk.
\newblock \emph{Mathematical Finance}, 9\penalty0 (3):\penalty0 203--228, 1999.

\bibitem[{Basel Committee on Banking Supervision}(2019)]{BaselIII}
{Basel Committee on Banking Supervision}.
\newblock Minimum capital requirements for market risk.
\newblock In \emph{Basel III: international regulatory framework for banks},
  2019.

\bibitem[B{\"a}uerle and Ott(2011)]{Bauerle2011}
Nicole B{\"a}uerle and Jonathan Ott.
\newblock Markov decision processes with average-value-at-risk criteria.
\newblock \emph{Mathematical Methods of Operations Research}, 74\penalty0
  (3):\penalty0 361--379, 2011.

\bibitem[Boda et~al.(2004)Boda, Filar, Lin, and Spanjers]{Boda2004}
Kang Boda, Jerzy~A Filar, Yuanlie Lin, and Lieneke Spanjers.
\newblock Stochastic target hitting time and the problem of early retirement.
\newblock \emph{IEEE Transactions on Automatic Control}, 49\penalty0
  (3):\penalty0 409--419, 2004.

\bibitem[Chapman et~al.(2019)Chapman, Lacotte, Tamar, Lee, Smith, Cheng, Fisac,
  Jha, Pavone, and Tomlin]{Chapman2019}
Margaret~P Chapman, Jonathan Lacotte, Aviv Tamar, Donggun Lee, Kevin~M Smith,
  Victoria Cheng, Jaime~F Fisac, Susmit Jha, Marco Pavone, and Claire~J Tomlin.
\newblock A risk-sensitive finite-time reachability approach for safety of
  stochastic dynamic systems.
\newblock In \emph{American Control Conference (ACC)}, pages 2958--2963. IEEE,
  2019.

\bibitem[Chapman et~al.(2022)Chapman, Bonalli, Smith, Yang, Pavone, and
  Tomlin]{Chapman2021}
Margaret~P Chapman, Riccardo Bonalli, Kevin~M Smith, Insoon Yang, Marco Pavone,
  and Claire~J Tomlin.
\newblock Risk-sensitive safety analysis using conditional value-at-risk.
\newblock \emph{IEEE Transactions on Automatic Control}, 67\penalty0
  (12):\penalty0 6521--6536, 2022.

\bibitem[Choudhry et~al.(2021)Choudhry, Moon, Patrikar, Samaras, and
  Scherer]{Choudhry2021}
Arnav Choudhry, Brady Moon, Jay Patrikar, Constantine Samaras, and Sebastian
  Scherer.
\newblock {{CVaR}}-based flight energy risk assessment for multirotor uavs
  using a deep energy model.
\newblock In \emph{IEEE International Conference on Robotics and Automation
  (ICRA)}, pages 262--268, 2021.

\bibitem[Chow and Ghavamzadeh(2014)]{Chow2014}
Yinlam Chow and Mohammad Ghavamzadeh.
\newblock Algorithms for {{CVaR}} optimization in {{MDPs}}.
\newblock In \emph{Neural {{Information Processing Systems}} ({{NIPS}})}, 2014.

\bibitem[Chow et~al.(2015)Chow, Tamar, Mannor, and Pavone]{Chow2015}
Yinlam Chow, Aviv Tamar, Shie Mannor, and Marco Pavone.
\newblock Risk-sensitive and robust decision-making: A {{CVaR}} optimization
  approach.
\newblock In \emph{Neural {{Information Processing Systems}} ({{NIPS}})}, 2015.

\bibitem[Chow et~al.(2018)Chow, Ghavamzadeh, Janson, and Pavone]{Chow2018a}
Yinlam Chow, Mohammad Ghavamzadeh, Lucas Janson, and Marco Pavone.
\newblock Risk-constrained reinforcement learning with percentile risk
  criteria.
\newblock \emph{Journal of Machine Learning Research}, 18:\penalty0 1--51,
  2018.

\bibitem[Cover and Thomas(2006)]{Cover2006}
Thomas~M. Cover and Joy~A. Thomas.
\newblock \emph{Elements of Information Theory}.
\newblock Wiley-Interscience, 2\textsuperscript{nd} edition, 2006.

\bibitem[Delage et~al.(2019)Delage, Kuhn, and Wiesemann]{Delage2019}
Erick Delage, Daniel Kuhn, and Wolfram Wiesemann.
\newblock ``{{Dice}}''-sion-making under uncertainty: When can a random
  decision reduce risk?
\newblock \emph{Management Science}, 65\penalty0 (7):\penalty0 3282--3301,
  2019.

\bibitem[Ding and Feinberg(2022)]{Ding2022}
Rui Ding and Eugene Feinberg.
\newblock {{CVaR}} optimization for {{MDPs}}: Existence and computation of
  optimal policies.
\newblock \emph{ACM SIGMETRICS Performance Evaluation Review}, 50\penalty0
  (2):\penalty0 39--41, 2022.

\bibitem[Ding and Feinberg(2023)]{Ding2023}
Rui Ding and Eugene~A. Feinberg.
\newblock Sequential {{Optimization}} of {{CVaR}}, February 2023.

\bibitem[Filar et~al.(1995)Filar, Krass, and Ross]{Filar1995}
Jerzy~A. Filar, Dmitry Krass, and Keith~W. Ross.
\newblock Percentile {{Performance Criteria For Limiting Average Markov
  Decision Processes}}.
\newblock \emph{IEEE Transactions on Automatic Control}, 40\penalty0
  (1):\penalty0 2--10, 1995.

\bibitem[Follmer and Schied(2016)]{Follmer2016}
Hans Follmer and Alexander Schied.
\newblock \emph{Stochastic Finance: Introduction in Discrete Time}.
\newblock {De Gruyter Graduate}, fourth edition, 2016.

\bibitem[Hakobyan and Yang(2021)]{Hakobyan2021}
Astghik Hakobyan and Insoon Yang.
\newblock Wasserstein distributionally robust motion control for collision
  avoidance using conditional value-at-risk.
\newblock \emph{IEEE Transactions on Robotics}, 38\penalty0 (2):\penalty0
  939--957, 2021.

\bibitem[Hau et~al.(2023)Hau, Petrik, and Ghavamzadeh]{Hau2023}
Jia~Lin Hau, Marek Petrik, and Mohammad Ghavamzadeh.
\newblock Entropic risk optimization in discounted {{MDPs}}.
\newblock In \emph{Artificial {{Intelligence}} and {{Statistics}}
  ({{AISTATS}})}, 2023.

\bibitem[Jin et~al.(2019)Jin, Sch{\"u}rmann, Murray, and Althoff]{Jin2019}
I~Ge Jin, Bastian Sch{\"u}rmann, Richard~M Murray, and Matthias Althoff.
\newblock Risk-aware motion planning for automated vehicle among human-driven
  cars.
\newblock In \emph{American Control Conference (ACC)}, pages 3987--3993, 2019.

\bibitem[Keramati et~al.(2020)Keramati, Dann, Tamkin, and
  Brunskill]{Keramati2020}
Ramtin Keramati, Christoph Dann, Alex Tamkin, and Emma Brunskill.
\newblock Being optimistic to be conservative: Quickly learning a {{CVaR}}
  policy.
\newblock In \emph{Proceedings of the AAAI conference on artificial
  intelligence}, volume~34, pages 4436--4443, 2020.

\bibitem[K{\"o}se(2016)]{Kose2016}
{\"U}mit~Emre K{\"o}se.
\newblock \emph{Optimal timing of living-donor liver transplantation under
  risk-aversion}.
\newblock PhD thesis, Bilkent Universitesi (Turkey), 2016.

\bibitem[Li et~al.(2022)Li, Zhong, and Brandeau]{Li2022}
Xiaocheng Li, Huaiyang Zhong, and Margaret~L. Brandeau.
\newblock Quantile {{Markov}} decision processes.
\newblock \emph{Operations Research}, 70\penalty0 (3):\penalty0 1428--1447,
  2022.

\bibitem[Lin et~al.(2003)Lin, Wu, and Kang]{Lin2003}
Yuanlie Lin, Congbin Wu, and Boda Kang.
\newblock Optimal models with maximizing probability of first achieving target
  value in the preceding stages.
\newblock \emph{Science in China Series A: Mathematics}, 46:\penalty0 396--414,
  2003.

\bibitem[Min et~al.(2022)Min, Moallemi, and Maglaras]{Min2022}
Seungki Min, Ciamac~C Moallemi, and Costis Maglaras.
\newblock Risk-sensitive optimal execution via a conditional value-at-risk
  objective.
\newblock \emph{arXiv preprint arXiv:2201.11962}, 2022.

\bibitem[Ni and Lai(2022)]{Ni2022}
Xinyi Ni and Lifeng Lai.
\newblock Risk-sensitive reinforcement learning via {{Entropic-VaR}}
  optimization.
\newblock In \emph{Asilomar Conference on Signals, Systems, and Computers},
  pages 953--959. {IEEE}, 2022.

\bibitem[Pflug and Pichler(2016)]{Pflug2016}
Georg~Ch Pflug and Alois Pichler.
\newblock Time-consistent decisions and temporal decomposition of coherent risk
  functionals.
\newblock \emph{Mathematics of Operations Research}, 41\penalty0 (2):\penalty0
  682--699, 2016.

\bibitem[Prashanth and Fu(2022)]{Prashanth2022}
L.~A Prashanth and Michael Fu.
\newblock Risk-sensitive reinforcement learning via policy gradient search.
\newblock \emph{Foundations and Trends in Machine Learning}, 15\penalty0
  (5):\penalty0 537--693, 2022.

\bibitem[Puterman(2005)]{Puterman2005}
Martin~L Puterman.
\newblock \emph{Markov Decision Processes: Discrete Stochastic Dynamic
  Programming}.
\newblock {Wiley-Interscience}, 2005.

\bibitem[Rigter et~al.(2021)Rigter, Lacerda, and Hawes]{Rigter2021}
Marc Rigter, Bruno Lacerda, and Nick Hawes.
\newblock Risk-averse {B}ayes-adaptive reinforcement learning.
\newblock \emph{Advances in Neural Information Processing Systems},
  34:\penalty0 1142--1154, 2021.

\bibitem[Shapiro et~al.(2014)Shapiro, Dentcheva, and Ruszczynski]{Shapiro2014}
A.~Shapiro, D.~Dentcheva, and A.~Ruszczynski.
\newblock \emph{Lectures on Stochastic Programming: Modeling and Theory}.
\newblock {SIAM}, 2014.

\bibitem[Shapiro(2017)]{Shapiro:interchange}
Alexander Shapiro.
\newblock Interchangeability principle and dynamic equations in risk averse
  stochastic programming.
\newblock \emph{Operations Research Letters}, 45\penalty0 (4):\penalty0
  377--381, 2017.

\bibitem[Sharma et~al.(2020)Sharma, Toubeh, Zhou, and Tokekar]{Sharma2020}
Vishnu~D Sharma, Maymoonah Toubeh, Lifeng Zhou, and Pratap Tokekar.
\newblock Risk-aware planning and assignment for ground vehicles using
  uncertain perception from aerial vehicles.
\newblock In \emph{International Conference on Intelligent Robots and Systems
  (IROS)}, pages 11763--11769. IEEE, 2020.

\bibitem[Stanko and Macek(2019)]{Stanko2019}
Silvestr Stanko and Karel Macek.
\newblock Risk-averse distributional reinforcement learning: A {{CVaR}}
  optimization approach.
\newblock In \emph{{{International Joint Conference}} on {{Computational
  Intelligence}}}, pages 412--423, 2019.

\bibitem[Wu and Lin(1999)]{Wu1999}
Congbin Wu and Yuanlie Lin.
\newblock Minimizing risk models in {{Markov}} decision processes with policies
  depending on target values.
\newblock \emph{Journal of mathematical analysis and applications},
  231\penalty0 (1):\penalty0 47--67, 1999.

\bibitem[Xu and Mannor(2011)]{Xu2011a}
Huan Xu and Shie Mannor.
\newblock Probabilistic goal {{Markov}} decision processes.
\newblock In \emph{International {{Joint Conference}} on {{Artificial
  Intelligence}}}, 2011.

\bibitem[Zhong(2020)]{Zhong2020}
Huaiyang Zhong.
\newblock \emph{Decision Making for Disease Treatment: Operations Research and
  Data Analytic Modeling}.
\newblock Stanford University, 2020.

\end{thebibliography}

\newpage
\appendix

\section{Proofs}

\subsection{Proof of \cref{thm:pflug-deco}}\label{app:PflugdecompProof}

Suppose that $\alpha > 0$; the decomposition for $\alpha = 0$ holds readily because $\cvar{0}{\tilde{x}} = \ess \inf [\tilde{x}]$.

To streamline the notation, Define a random variable $\tilde{x} = r(\tilde{s}, \tilde{a}, \tilde{s}')$ over a random space $\Omega = \mathcal{S} \times  \mathcal{A} \times \mathcal{S}$ with a probability distribution $\bm{q}\in \probs{m}$ such that $q_{s,a,s'} = \hat{p}_s \cdot \pi(s,a) \cdot  p(s,a,s')$. The value $\bm{x}$ is the vector representation of the random variable $\tilde{x}$ and $\bm{\xi}_s = \bm{\xi}_{s,\cdot,\cdot} \in \Real^{S \cdot  A}$ for $\bm{\xi}\in \Real^m$ is a vector that corresponds to the subset of the elements of $\Omega$ in which the first element is some $s \in \mathcal{S}$. The vectors $\bm{x}_s = \bm{x}_{s,\cdot, \cdot} \in \Real^{S\cdot A}$ and $\bm{q}_s = \bm{q}_{s,\cdot,\cdot} \in \Real^{S \cdot A}$ are defined analogously to $\bm{\xi}_s$.

Starting with the CVaR definition in~\eqref{eq:cvar-definition} and introducing an auxiliary variable $\bm{\zeta}$ we get that
\begin{align*}
  \cvar{\alpha}{\tilde{x}}
  &= \min_{\bm{\xi}\in \probs{m}} \left\{ \bm{x}\tr \bm{\xi} \mid \alpha\bm{\xi}  \le  \bm{q} \right\}  
  = \min_{\bm{\xi}\in \probs{m},\bm{\zeta}\in \Real^{S}}  \left\{ \bm{x}\tr \bm{\xi} \mid \alpha\bm{\xi}  \le  \bm{q},\, \zeta_s =  \bm{1}\tr  \bm{\xi}_s, \forall s\in \mathcal{S} \right\} \\
  &= \min_{\bm{\xi}\in \probs{m},\bm{\zeta}\in \probs{S}}  \left\{ \bm{x}\tr \bm{\xi} \mid \alpha\bm{\xi}  \le  \bm{q}, \, \zeta_s =  \bm{1}\tr  \bm{\xi}_s, \, \alpha\zeta_s \le   \hat{p}_s, \, \forall s\in \mathcal{S} \right\} \\
  &= \min_{\bm{\xi}\in \Real_+^{\Omega},\bm{\zeta}\in \ZetaC}  \left\{ \bm{x}\tr \bm{\xi} \mid  \alpha\bm{\xi}  \le  \bm{q}, \, \zeta_s =  \bm{1}\tr  \bm{\xi}_s, \, \forall s\in \mathcal{S} \right\}
      \,.
\end{align*}
In the derivation above, we replaced the infimum by a minimum because $\Omega$ is finite, introduced a new variable $\bm{\zeta}$, derived implied constraints on $\bm{\zeta}$, and then dropped superfluous constraints on $\bm{\xi}$. Continuing with the derivation above and noticing that the constraints on each $\bm{\xi}_s$ are independent given $\bm{\zeta}$, we get that
\begin{align*} 
  \cvar{\alpha}{\tilde{x} }
  &= \min_{\bm{\xi}\in \Real_+^{\Omega},\bm{\zeta}\in \ZetaC}  \left\{ \sum_{s\in \mathcal{S}}\bm{x}_s\tr \bm{\xi}_s \mid \alpha\bm{\xi}  \le \bm{q}, \, \zeta_s =  \bm{1}\tr  \bm{\xi}_s, \, \forall s\in \mathcal{S} \right\} \\
  &\overset{\textrm{(a)}}{=} \min_{\bm{\zeta}\in \ZetaC}  \sum_{s\in \mathcal{S}}\inf_{\bm{\xi}_s\in \Real_+^{\Omega_s}}  \left\{ \bm{x}_s\tr \bm{\xi}_s \mid  \alpha\bm{\xi}_s  \le \bm{q}_s, \, \zeta_s =  \bm{1}\tr  \bm{\xi}_s \right\} \\
  &\overset{\textrm{(b)}}{=} \min_{\bm{\zeta}\in \ZetaC}  \sum_{s\in \mathcal{S}}\zeta _s\cdot \min_{\bm{\chi}\in \probs{ S\cdot A}}  \left\{  \bm{x}_s\tr \bm{\chi} \mid  \alpha\hat{p}_s^{-1} \zeta_s\chi_{a,s'}  \le    \hat{p}_{s}^{-1} q_{s,a,s'}, \forall a\in \mathcal{A}, s'\in \mathcal{S} \right\} \\
  &\overset{\textrm{(c)}}{=} \min_{\bm{\zeta}\in \ZetaC}  \sum_{s\in \mathcal{S}}\zeta _s\cdot \cvar{\alpha \zeta_s \hat{p}_s^{-1}}{\tilde{x} \mid  \tilde{s} = s} \,.
\end{align*}
The step (a) follows from the interchangeability principle \cite[theorem~7.92]{Shapiro2014}, and the step (b) follows by substituting $\xi_{s,a,s'}= \zeta_s\chi_{a,s'}$ taking care when $\zeta_s = 0$ and multiplying both sides of the inequality by $\hat{p}_s^{-1} > 0$.  Finally, in step (c), the random variable $\tilde{x} = r(\tilde{s}, \tilde{a}, \tilde{s}')$ conditional on $\tilde{s} = s$ is distributed according to $q_{s,a,s'} \hat{p}_s^{-1}$ and the equality follows from the definition of CVaR in~\eqref{eq:cvar-definition}.\qed

\subsection{Proof of Theorem~\cref{thm:evar-wrong}}\label{app:proofEVaRWrong}

  Consider an MDP $\Me$ depicted in \cref{fig:decomposition-evar} with $\mathcal{S} = \left\{ s_1, s_2 \right\}$ and $\mathcal{A} = \left\{ a_1 \right\}$ and a reward function $r(s_1,a_1,\cdot) = 1$ and $r(s_2,a_1,\cdot) = 0$. We abbreviate the rewards to $r(s_1)$ and $r(s_2)$ because they only depend on the originating state. The initial distribution is $\hat{p}_{s_1} = \hat{p}_{s_2} = 0.5$. We finally let $\alpha = 0.75$.

\begin{figure}
  \centering
  \begin{tikzpicture}[->,>=stealth',shorten >=1pt,auto,node distance=1.8cm,semithick,level distance=20mm]
    \tikzstyle{level 1}=[sibling distance=15mm]
    \tikzstyle{level 2}=[sibling distance=8mm]
    \tikzstyle{level 3}=[sibling distance=7mm]
    \node (s0){start} [grow'=right,->]
    child { node (s1) {$s_1$}
      child {node (s11) {$s_1,a_1$}
        child {node (s111) { $r(s_1,a_1, \cdot)$ }}}
    }
    child { node (s2) {$s_2$}
      child {node (s21) {$s_2,a_1$}
        child {node (s211) { $r(s_2,a_1,\cdot)$ }}}
    };
\end{tikzpicture}
  \caption{Rewards of the MDP $\Me$ used in the proof of \cref{thm:evar-wrong}. The dot indicates that the rewards are independent of the next state.}
  \label{fig:decomposition-evar}
\end{figure}

Because $\ZetaE \subseteq \ZetaC$, the right-hand side of~\eqref{eq:evar-false} can be lower-bounded by CVaR as
\begin{equation} \label{eq:dec-cvar-bound}
  \begin{aligned}
    \min_{\bm{\xi} \in \ZetaE} \;  \sum_{s\in \mathcal{S}}  \xi_s   \evar{\alpha \xi_s \hat{p}_s^{-1}}{r(s, a_1, \tilde{s}')}
    &\;=\; 
  \min_{\bm{\xi} \in \ZetaE} \;  \sum_{s\in \mathcal{S}}  \xi_s   r(s) \\
   &\;\ge\;  
  \min_{\bm{\xi} \in \ZetaC} \;  \sum_{s\in \mathcal{S}}  \xi_s   r(s)  = 
 \cvar{\alpha}{r(\tilde{s}, a_1,\tilde{s}')}\,.
  \end{aligned}
\end{equation}
The first equality holds from the positive homogeneity and cash invariance properties of EVaR, and the last equality follows from the dual representation of CVaR~\cite{Follmer2016}.

Because $\evar{\alpha}{ \tilde{x} } \leq \cvar{\alpha}{ \tilde{x} }$ for each $ \alpha \in [ 0,1 ]$ and $\tilde{x}\in \mathbb{X}$~ (see \cite[proposition 3.2]{Ahmadi-Javid2012}), we can further lower-bound~\eqref{eq:dec-cvar-bound} as
\begin{equation}\label{eq:dec-evar-cvar-all}
  \evar{\alpha}{r(\tilde{s}, a_1,\tilde{s}')}
  \; \le \; 
  \cvar{\alpha}{r(\tilde{s}, a_1,\tilde{s}')}
  \; \le \;  
  \min_{\bm{\xi} \in \ZetaE} \;  \sum_{s\in \mathcal{S}}  \xi_s   \evar{\alpha \xi_s \hat{p}_s^{-1}}{r(s, a_1, \tilde{s}')} \,.
\end{equation}
Therefore,~\eqref{eq:evar-false} holds with an inequality. 

To prove by contradiction that the inequality in~\eqref{eq:evar-false} is strict, suppose that
\begin{equation}\label{eq:evar-false-le}
 \evar{\alpha}{r(\tilde{s}, a_1,\tilde{s}')}
\; = \; 
\min_{\bm{\xi} \in \ZetaE} \;  \sum_{s\in \mathcal{S}}  \xi_s   \evar{\alpha \xi_s \hat{p}_s^{-1}}{r(s, a_1, \tilde{s}')}  \,.
\end{equation}
Equalities~\eqref{eq:evar-false-le} and~\eqref{eq:dec-evar-cvar-all} imply that  $\evar{\alpha}{r(\tilde{s}, a_1,\tilde{s}')}  = \cvar{\alpha}{r(\tilde{s}, a_1,\tilde{s}')}$ which is false in general~\cite{Ahmadi-Javid2012}.

We now show that EVaR does not equal CVaR even for the categorical distribution of $\tilde{s}$. The CVaR of the return in $\Me$ reduces from~\eqref{eq:cvar-definition} to
\begin{equation} \label{eq:cvar-me}
 \cvar{\alpha}{r(\tilde{s}, a_1,\tilde{s}')} = \min_{\bm{\xi}\in \ZetaC} \sum_{s\in \mathcal{S}} \xi_s r(s)  = \max \left\{ 0,\, \frac{\hat{p}_{s_1} + \alpha -1}{\alpha} \right\}\,.  
\end{equation}
Since $1-\alpha = 0.25 < 0.5 = \hat{p}_{s_1}$, 
then the optimal $\bm{\xi}\opt$ in~\eqref{eq:cvar-me} is
\[
 \bm{\xi}\opt  =
 \begin{pmatrix}
    \frac{\hat{p}_{s_1} + \alpha - 1}{\alpha} \\
     \frac{1- \hat{p}_{s_1}}{\alpha}
 \end{pmatrix}\,.
\]
Since \(\kl(\bm{\xi}\opt  \| \bm{\hat{p}}) < -\log \alpha\,. \) we have that $\bm{\xi}\opt$ is in the relative interior of the EVaR feasible region in~\eqref{eq:evar-def-app}, and, therefore, there exists an $\epsilon > 0$ such that
\[
 \evar{\alpha}{r(\tilde{s}, a_1,\tilde{s}')} =
 \cvar{\alpha}{r(\tilde{s}, a_1,\tilde{s}')} - \epsilon <
 \cvar{\alpha}{r(\tilde{s}, a_1,\tilde{s}')} \,,
\]
which proves the desired inequality.\qed

\subsection{Proof of Corollary~\ref{cor:evar-decomp-specific}}\label{app:proofEVaRDecomp}

We start by proposing a new decomposition for EVaR.
\begin{proposition} \label{thm:evar-decomp-gen}
Given a random variable $\tilde{x}\in \mathbb{X}$ and a discrete variable $\tilde{y}\colon \Omega \to \mathcal{N} = \{1,\dots,N\}$, with probabilities denoted as $\{\hat{p}_i\}_{i=1}^N$, for any $\alpha\in (0,1]$ we have that
\[ 
  \evar{\alpha}{\tilde{x}}
  \quad  =\quad
  \inf_{\bm{\zeta}\in (0,1]^N}\min_{\bm{\xi} \in \ZetaE'(\bm{\zeta}) } \; \sum_i \xi_i \evar{\zeta_i}{\tilde{x} \mid \tilde{y}=i} \,,
\]
where
\[ 
  \ZetaE'(\bm{\zeta}) = \left\{ \bm{\xi} \in \probs{ N } \mid \bm{\xi} \ll \bm{\hat{p}}, \sum_{i=1}^N \xi_i  (\log(\xi_i/\hat{p}_i)- \log(\zeta_i)) \le  - \log \alpha \right\} \,.
\]
\end{proposition}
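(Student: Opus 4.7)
The plan is to invoke the dual (variational) representation of EVaR in~\eqref{eq:evar-def-app} applied to the joint distribution $\bm{q}$ of $(\tilde{y},\tilde{x})$ over $\mathcal{N}\times\Omega$, and then decompose along $\tilde{y}$ using the chain rule for KL divergence. Any feasible $\bm{\xi}\ll\bm{q}$ factors into a marginal $\xi_i^{(1)}=\Pr{\tilde y=i}$ and conditionals $\bm{\xi}^{(2)\mid i}\ll\bm{q}^{(2)\mid i}$ for each $i$ with $\xi_i^{(1)}>0$. The tower property yields $\Exp{\bm{\xi}}{\tilde{x}}=\sum_i\xi_i^{(1)}\Exp{\bm{\xi}^{(2)\mid i}}{\tilde{x}\mid \tilde{y}=i}$, and a direct expansion gives the identity
\[
\kl(\bm{\xi}\|\bm{q}) \;=\; \kl(\bm{\xi}^{(1)}\|\bm{\hat{p}}) \;+\; \sum_{i\in \mathcal{N}} \xi_i^{(1)}\,\kl(\bm{\xi}^{(2)\mid i}\|\bm{q}^{(2)\mid i}).
\]

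Next I would introduce auxiliary slack variables $\zeta_i\in(0,1]$ parameterizing the conditional KL budgets via $\kl(\bm{\xi}^{(2)\mid i}\|\bm{q}^{(2)\mid i})\le -\log\zeta_i$. Plugging this into the global constraint $\kl(\bm{\xi}\|\bm{q})\le -\log\alpha$ converts it into $\sum_i\xi_i^{(1)}\bigl(\log(\xi_i^{(1)}/\hat{p}_i)-\log\zeta_i\bigr)\le -\log\alpha$, which is exactly $\bm{\xi}^{(1)}\in\ZetaE'(\bm{\zeta})$. At fixed $\bm{\zeta}$ the joint minimization then decouples across $i$ for the conditionals, so the interchangeability principle~\cite[theorem~7.92]{Shapiro2014} lets me push each inner infimum inside the sum; by~\eqref{eq:evar-def-app} that inner infimum equals $\evar{\zeta_i}{\tilde{x}\mid \tilde{y}=i}$, and the outer minimization over $\bm{\xi}^{(1)}$ in $\ZetaE'(\bm{\zeta})$ together with the infimum over $\bm{\zeta}$ produces the claimed formula. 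To certify nothing is lost, I would observe a two-way correspondence: any feasible joint $\bm{\xi}$ fits the template by setting $\zeta_i=\exp(-\kl(\bm{\xi}^{(2)\mid i}\|\bm{q}^{(2)\mid i}))\in(0,1]$, while any feasible triple $(\bm{\xi}^{(1)},\bm{\zeta},\{\bm{\xi}^{(2)\mid i}\})$ reassembles into a feasible joint by the chain rule identity.

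The main obstacle I anticipate is careful bookkeeping of degenerate components where $\xi_i^{(1)}=0$: the chain rule relies on the convention $0\log 0=0$, and on such components the conditional is immaterial and the associated $\zeta_i$ drops out of both the objective and the marginal constraint, so one must verify that restricting $\bm{\zeta}$ to $(0,1]^N$ rather than $[0,1]^N$ loses no generality (the upper bound comes from $\kl\ge 0$ together with $\bm{\xi}\ll\bm{q}$). Once the proposition is established, the corollary's upper bound~\eqref{eq:evar-dec-fixed-0} follows immediately by specializing the joint infimum to $\zeta_s=\alpha\xi_s/\hat{p}_s$, which lies in $(0,1]$ exactly when $\alpha\bm{\xi}\le\bm{\hat{p}}$ (the implicit constraint defining $\ZetaE$) and makes $\sum_s\xi_s(\log(\xi_s/\hat{p}_s)-\log\zeta_s)= -\log\alpha$, so feasibility in $\ZetaE'(\bm{\zeta})$ holds automatically.
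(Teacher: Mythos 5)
Your proposal is correct and follows essentially the same route as the paper's proof: both apply the dual (KL-constrained) representation of EVaR to the joint distribution of $(\tilde{y},\tilde{x})$, invoke the chain rule for relative entropy, introduce per-component slack variables $\zeta_i$ bounding the conditional KL budgets, and decouple the inner minimization into conditional EVaRs via interchangeability. Your explicit two-way correspondence and the handling of components with $\xi_i=0$ are welcome refinements of details the paper leaves implicit, but they do not constitute a different argument.
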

\begin{proof}
  Let $\bm{q}$ denote the joint probability distribution of $\tilde{x}$ and $\tilde{y}$. The proof exploits the chain rule of relative entropy (e.g.,~\citet[theorem~2.5.3]{Cover2006}), which states that for any probability distributions $\bm{\eta}, \bm{q}\in \probs{\Omega}$ with $\bm{\eta} \ll \bm{q}$ that
\begin{equation} \label{eq:kl-conditional}
  \kl(\bm{\eta} \| \bm{q}) = \kl(\bm{\eta}(\tilde{y}) \| \bm{q}(\tilde{y})) +
  \kl( \bm{\eta}(\tilde{x} |  \tilde{y}) \| \bm{q}(\tilde{x} |  \tilde{y})),
\end{equation}
where the conditional relative entropy is defined as
\[
  \kl(\bm{\eta}(\tilde{x} |  \tilde{y}) \| \bm{q}(\tilde{x} |  \tilde{y})) = 
  \E^{\bm{\eta}}\left[\log \frac{\bm{\eta}(\tilde{x}  | \tilde{y})}{\bm{q}(\tilde{x}  | \tilde{y})}\right]\,.
\]
with $\E^{\bm{\eta}}[f(\tilde{x},\tilde{y})]$ as a shorthand notation to indicate that $(\tilde{x},\tilde{y})\sim\bm{\eta}$.
We can now decompose EVaR from its definition in~\eqref{eq:evar-def-app} as
\begin{align*}
    &\evar{\alpha}{\tilde{x}} = \inf_{\bm{\eta} \in \probs{m\cdot N }:\bm{\eta} \ll \bm{q}} \; \Bigl\{ \E^{\bm{\eta}} [\tilde{x}] \mid  \kl(\bm{\eta} \mid \bm{q}) \le -\log \alpha \Bigr\}\\
    &\overset{\textrm{(a)}}{=} \inf_{\bm{\eta} \in \probs{m\cdot N }:\bm{\eta} \ll \bm{q}} \; \Bigl\{ \E^{\bm{\eta}}[\tilde{x} ] \mid  \kl(\bm{\eta}(\tilde{y}) \| \bm{q}(\tilde{y}))+ \kl(\bm{\eta}(\tilde{x} |\tilde{y}) \| \bm{q}(\tilde{x} |\tilde{y})) \le -\log \alpha \Bigr\}\\
    &= \inf_{\bm{\eta} \in \probs{m\cdot N }:\bm{\eta} \ll \bm{q}} \; \Bigl\{ \E^{\bm{\eta}}[\tilde{x} ]  \mid  \kl(\bm{\eta}(\tilde{y}) \| \bm{q}(\tilde{y}))+ \E^{\bm{\eta}}\left[\E^{\bm{\eta}}\left[\log \frac{\bm{\eta}(\tilde{x}  | \tilde{y})}{\bm{q}(\tilde{x}  | \tilde{y})}\right]\mid \tilde{y}\right] \le -\log \alpha \Bigr\}\\
    &\overset{\textrm{(b)}}{=} \inf_{\bm{\eta} \in \probs{m\cdot N },\bm{\zeta}\in (0,1]^N:\bm{\eta} \ll \bm{q}} \; \left\{ \E^{\bm{\eta}}[ \E^{\bm{\eta}} [\tilde{x} \mid  \tilde{y}]] \mid \begin{array}{c} \kl(\bm{\eta}(\tilde{y}) \| \bm{q}(\tilde{y}))+ \E^{\bm{\eta}}[-\log(\zeta_{\tilde{y}})] \le -\log \alpha\\ \Po_{\bm{\eta}}\left[\E^{\bm{\eta}}\left[\log  (\nicefrac{\bm{\eta}(\tilde{x}  | \tilde{y})}{\bm{q}(\tilde{x} | \tilde{y}))}\mid \tilde{y}\right]\leq -\log(\zeta_{\tilde{y}}) \right] = 1 \end{array} \right\}\\          
    &\overset{\textrm{(c)}}{=} \inf_{\bm{\xi} \in \probs{N },\bm{\zeta}\in(0,1]^N:\bm{\xi} \ll \bm{\hat{p}}} \; \Bigl\{ \E^{\bm{\xi}}[ \evar{\zeta_{\tilde{y}}}{\tilde{x} | \tilde{y}}] \mid  \kl(\bm{\xi} \| \bm{\hat{p}})+ \E^{\bm{\xi}}[-\log(\zeta_{\tilde{y}})] \le -\log \alpha \Bigr\}\\    
    &= \inf_{\bm{\xi} \in \probs{N },\bm{\zeta}\in(0,1]^N:\bm{\xi} \ll \bm{\hat{p}}} \left\{ \sum_i \xi_i \evar{\zeta_i}{\tilde{x}\mid \tilde{y}=i} \mid \sum_{i=1}^N \xi_i \log(\xi_i/\hat{p}_i) - \sum_{i=1}^N \xi_i \log(\zeta_i) \le  - \log \alpha\right\}.
\end{align*}
Here, we decompose the relative entropy of $\bm{\eta}$ and $\bm{q}$ using~\eqref{eq:kl-conditional} in step (a) and then use the tower property of the expectation operator in the next step. In step (b), we introduce a variable $\zeta_i$ for each realization of $\tilde{y} = i$ with $i\in \mathcal{N}$ to decouple the influence of $\bm{\eta(\tilde{x}|\tilde{y})}$, under each $\tilde{y}$, in the inequality constraint. Finally, we replace the conditional EVaR definition by solving for $\bm{\eta(\tilde{x}|\tilde{y})}$ for a given $\bm{\zeta}$ in step (c), and representing $\bm{\eta(\tilde{y})}$ using $\bm{\xi}$.
\end{proof}

The first part of our theorem follows directly from \cref{thm:evar-decomp-gen}. Suppose that $\alpha > 0$; the result follows for $\alpha = 0$ because $\evar{0}{\cdot}$ reduces to $\ess \inf$. Then, the second part of the corollary holds as 
\begin{align*}
  &\evar{\alpha}{r(\tilde{s}, \tilde{a}, \tilde{s}') }
    = \\
  &= \inf_{\bm{\zeta}\in(0,1]^N,\,\bm{\xi} \in \probs{ N }} \left\{ \sum_{s\in \mathcal{S}} \xi_s \evar{\zeta_s}{r(s, \tilde{a}, \tilde{s}') \mid \tilde{s} = s} \mid  \sum_{s\in \mathcal{S}} \xi_s \log\frac{\xi_s}{\zeta_s\hat{p}_s} \le  - \log \alpha\right\}\\
    &\leq \inf_{\bm{\zeta}\in(0,1]^N,\,\bm{\xi} \in \probs{ N }} \left\{ \sum_{s\in \mathcal{S}} \xi_s \evar{\zeta_s}{r(s, \tilde{a}, \tilde{s}') \mid  \tilde{s} = s} \mid \sum_{s\in \mathcal{S}} \xi_s \log\frac{\xi_s}{\zeta_s\hat{p}_s} \le  - \log \alpha,\,\bm{\xi} \leq \alpha^{-1} \hat{\bm{p}} \right\}\\
    &\leq \inf_{\bm{\xi} \in \probs{ N }} \left\{ \sum_{s\in \mathcal{S}} \xi_s \evar{\alpha \xi_s \hat{p}_s^{-1}}{r(s, \tilde{a}, \tilde{s}') \mid  \tilde{s} = s} \mid \bm{\xi} \leq \alpha^{-1} \hat{\bm{p}} \right\}\,.
  \end{align*}
  The first inequality follows from adding a constraint on the pairs on the $\bm{\xi}$ considered by the infimum.
The second inequality follows by fixing $\zeta_s = \hat{\zeta}_s$ with $\hat{\zeta}_s = \alpha \xi_s \hat{p}_s^{-1}$ for each $s\in \mathcal{S}$. This is an upper bound because $\hat{\zeta}_s$ is feasible in the infimum:
  \[
    \sum_{s\in \mathcal{S}} \xi_s \log\frac{\xi_s}{\hat{\zeta}_s\hat{p}_s} = -\log \alpha  \le  - \log \alpha\,.
  \]
The value $\hat{\zeta}_s$ is well-defined since $\hat{p}_s > 0$ and the constraint $\bm{\xi} \leq \alpha^{-1} \hat{\bm{p}}$ ensures that $\hat{\zeta}_s \le 1$. Also, we can relax the constraint $\zeta_s >0 \Rightarrow \xi_s >0$ to $\xi_s \ge 0$ because $\evar{0}{\tilde{x}} = \lim_{\alpha \to 0} \evar{\alpha}{\tilde{x}}$, and, therefore, the infimum is not affected. Finally, the inequality in the corollary follows immediately by further upper bounding the decomposition above by adding a constraint. \qed 

\subsection{Proof of Theorem~\ref{thm:var-decomp-opt}}
\label{sec:proof-var-decomp}
The equality develops from \cref{thm:var:decomp} as
\begin{align*}
\max_{\pi\in \Pi} \; \varo_{\alpha}^{\tilde{a}\sim\bm{\pi}(\tilde{s})}[{r(\tilde{s}, \tilde{a}, \tilde{s}')}]
&\;=\;  \max_{\pi\in \Pi} \sup_{\bm{\zeta}\in\probs{S}:\alpha\cdot \bm{\zeta} \leq \bm{\hat{p}}} \;\min_{s\in\mathcal{S}}\; \left( \varo_{\alpha \zeta_s \hat{p}_s^{-1}}^{\tilde{a} \sim \bm{\pi}(s)}[{r(s, \tilde{a}, \tilde{s}')}\mid \tilde{s}=s] \right) \\
&\;=\;   \sup_{\bm{\zeta}\in\probs{S}:\alpha\cdot \bm{\zeta} \leq \bm{\hat{p}}} \max_{\pi\in \Pi}\;\min_{s\in\mathcal{S}}\; \left( \varo_{\alpha \zeta_s \hat{p}_s^{-1}}^{\tilde{a} \sim \bm{\pi}(s)}[{r(s, \tilde{a}, \tilde{s}')}\mid \tilde{s}=s] \right) \\
&\;=\;  \sup_{\bm{\zeta}\in\probs{S}:\alpha\cdot \bm{\zeta} \leq \bm{\hat{p}}} \;\min_{s\in\mathcal{S}}\; \Big( \max_{\bm{d}\in \probs{ A }} \varo_{\alpha \zeta_s \hat{p}_s^{-1}}^{\tilde{a} \sim \bm{d}}[{r(s, \tilde{a}, \tilde{s}')}\mid \tilde{s}=s] \Big) ,
\end{align*}
where we first change the order of maximum and supremum, followed by changing the order of $\max_\pi \min_s$ with $\min_s \max_\pi$. The latter is a direct consequence of the interchangeability property of the maximum operation~\cite[proposition 2.2]{Shapiro:interchange}.
\qed

\subsection{Proof of Theorem~\ref{thm:quant-decomp-opt}}\label{app:prooflquantDecomp}

The proof mainly relies on correcting the decomposition of lower quantile proposed in \cite{Li2022}. 

\begin{proposition}\label{thm:quant:decomp}
Given an $\tilde{x}\in \mathbb{X}$, suppose that a random variable $\tilde{y}\colon  \Omega \to \mathcal{N} = \{1,\dots,N\}$ is distributed as $\bm{\hat{p}} = (\hat{p}_i)_{i=1}^N$ with $\hat{p}_i > 0$. Then:
\begin{equation} \label{eq:quantile-dec-fixed-fixed}
  Q_\alpha(\tilde{x})
  \quad =\quad
  \sup_{\bm{\zeta}\in[0,1]^N} \left\{\min_{i\in \mathcal{N}: \zeta_i<1}\; Q_{\zeta_i}(\tilde{x} \mid  \tilde{y}=i) \mid \sum_{i=1}^N \zeta_i \hat{p}_i < \alpha\right\}\,,
\end{equation}
where we interpret the supremum to be minus infinity if its feasible set is empty, which only occurs if $\alpha =0$. 
\end{proposition}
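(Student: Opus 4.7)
The plan is to adapt the proof of \cref{thm:var:decomp} while carefully tracking strict versus weak inequalities: it is precisely this distinction that separates the lower quantile $Q_\alpha = \quant^-_{\tilde{x}}(\alpha)$ from $\varo_\alpha = \quant^+_{\tilde{x}}(\alpha)$, as highlighted in~\eqref{eq:var-as-quantile}.

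First I would rewrite
\[
Q_\alpha(\tilde{x}) \;=\; \sup\bigl\{ z \in \Real \mid \Pr{\tilde{x} < z} < \alpha \bigr\}
\]
using the alternative expression for $\quant^-_{\tilde{x}}(\alpha)$ given in the excerpt, and apply the law of total probability to rewrite the constraint as $\sum_{i=1}^N \Pr{\tilde{x} < z \mid \tilde{y}=i}\,\hat{p}_i < \alpha$. My next step would be to introduce auxiliary variables $\zeta_i \in [0,1]$ and establish the set identity
\[
\Bigl\{ z \mid \textstyle\sum_i \Pr{\tilde{x} < z \mid \tilde{y}=i}\,\hat{p}_i < \alpha\Bigr\}
\;=\;
\bigcup_{\bm{\zeta}\in[0,1]^N:\,\sum_i \zeta_i \hat{p}_i < \alpha}
\Bigl\{ z \mid \Pr{\tilde{x} < z \mid \tilde{y}=i} < \zeta_i\ \forall i \text{ with } \zeta_i < 1 \Bigr\}.
\]
The $\supseteq$ direction is immediate since $\Pr{\tilde{x} < z \mid \tilde{y}=i} \le \zeta_i$ holds on both branches (strict when $\zeta_i<1$, trivial when $\zeta_i=1$), yielding $\sum_i \Pr{\tilde{x} < z \mid \tilde{y}=i}\,\hat{p}_i \le \sum_i \zeta_i \hat{p}_i < \alpha$. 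For $\subseteq$, given a feasible $z$ I would set $\zeta_i = \min\{1,\,\Pr{\tilde{x} < z \mid \tilde{y}=i} + \epsilon\}$ with $\epsilon > 0$ small enough that $\sum_i \zeta_i \hat{p}_i < \alpha$ remains true, exploiting the positive slack $\alpha - \sum_i \Pr{\tilde{x} < z \mid \tilde{y}=i}\,\hat{p}_i > 0$.

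After swapping the two suprema (over $z$ and over $\bm{\zeta}$), the inner supremum factorizes: the sets $\{z : \Pr{\tilde{x} < z \mid \tilde{y}=i} < \zeta_i\}$ are nested lower intervals by monotonicity of $z \mapsto \Pr{\tilde{x} < z \mid \tilde{y}=i}$, so the supremum of their intersection equals the minimum of the individual suprema, and each such supremum is $Q_{\zeta_i}(\tilde{x} \mid \tilde{y}=i)$ by the alternative expression for $\quant^-$. The strict constraint $\sum_i \zeta_i \hat{p}_i < \alpha \le 1$ forces at least one index with $\zeta_i < 1$, so the index set of the $\min$ is nonempty; the boundary case $\alpha = 0$ matches the stated convention because the $\bm{\zeta}$-feasible set is empty (as every $\hat{p}_i > 0$) and $Q_0(\tilde{x}) = -\infty$.

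The main obstacle is the $\subseteq$ direction of the auxiliary-variable equivalence. Unlike the VaR proof, one cannot simply take $\zeta_i = \Pr{\tilde{x} < z \mid \tilde{y}=i}$: this would collapse $Q_{\zeta_i}(\tilde{x} \mid \tilde{y}=i)$ to a value that can be strictly smaller than $z$ whenever the conditional distribution has an atom at $z$, as illustrated by~\cref{ex:bernoulli}. The $\epsilon$-perturbation sidesteps this issue, and is the technical reason the proposition must use the strict inequality $\sum_i \zeta_i \hat{p}_i < \alpha$ rather than the weak inequality stated in~\citet{Li2022}; this same strictness simultaneously guarantees that the index set of the $\min$ is nonempty.
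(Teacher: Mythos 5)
Your proposal is correct and follows essentially the same route as the paper's proof: decompose $\Pr{\tilde{x}<z}$ by total probability, introduce auxiliary variables $\zeta_i$ with the strict inequalities $\Pr{\tilde{x}<z\mid\tilde{y}=i}<\zeta_i$ enforced only where $\zeta_i<1$, swap the suprema, and identify the inner supremum with $\min_i Q_{\zeta_i}(\tilde{x}\mid\tilde{y}=i)$. The only (harmless) difference is that where the paper proves the pointwise equivalence $\Pr{\tilde{x}<z\mid\tilde{y}=i}<\zeta_i \Leftrightarrow z<Q_{\zeta_i}(\tilde{x}\mid\tilde{y}=i)$ via a continuity argument, you instead compute the supremum of the nested constraint sets directly from the $\sup$-representation of $\quant^-$, and you make the $\epsilon$-perturbation in the $\subseteq$ direction explicit -- a point the paper's step (b) leaves implicit.
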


\begin{proof}
First, we decompose the lower quantile using its definition as 
\begin{align*}
  Q_\alpha(\tilde{x})
  &= \sup \left\{z \ss \Pr{  \tilde{x} <  z  } <  \alpha\right\}
  \overset{\textrm{(a)}}{=} \sup_{z\in\mathbb R}\; \left\{z \mid \sum_{i=1}^N \Pr{ \tilde{x}< z \mid \tilde{y}=i }\hat{p}_i < \alpha\right\}\\
  &\overset{\textrm{(b)}}{=} \sup_{z\in\mathbb R,\bm{\zeta}\in[0,1]^N}\; \left\{z \mid \sum_{i=1}^N \zeta_i\hat{p}_i < \alpha,\; \Pr{ \tilde{x} < z \mid \tilde{y}=i } < \zeta_i ,\forall i \in \mathcal{N}: \zeta_i<1 \right\}\\  
  &\overset{\textrm{(c)}}{=} \sup_{z\in\mathbb R,\bm{\zeta}\in[0,1]^N}\; \left\{z \mid z < Q_{\zeta_i}(\tilde{x} \mid  \tilde{y}=i),\forall i \in \mathcal{N}: \zeta_i<1,\;\sum_{i=1}^N \zeta_i\hat{p}_i < \alpha\right\}\\  
  &\overset{\textrm{(d)}}{=} \sup_{\bm{\zeta}\in[0,1]^N}\; \left\{\sup_{z\in\mathbb R}\;\left\{z \mid z < Q_{\zeta_i}(\tilde{x} \mid  \tilde{y}=i),\forall i \in \mathcal{N}: \zeta_i<1\right\}\mid\sum_{i=1}^N \zeta_i\hat{p}_i < \alpha\right\}\\    
 &\overset{\textrm{(e)}}{=} \sup_{\bm{\zeta}\in[0,1]^N} \left\{\min_{i\in \mathcal{N}: \zeta_i<1}\; Q_{\zeta_i}(\tilde{x} \mid  \tilde{y}=i) \mid \sum_{i=1}^N \zeta_i \hat{p}_i < \alpha\right\}.
   \end{align*}
We decompose the probability $\Pr{ \tilde{x} < z }$ in terms of the conditional probabilities $\Pr{ \tilde{x} < z \mid \tilde{y}=i}$ in step (a) and then lower-bound them by an auxiliary variable $\zeta_i$ in step (b). In step (c), we exploit the following equivalence:
\[
  \Pr{ \tilde{x} <  z \mid \tilde{y}=i} < \zeta_i
  \quad  \Leftrightarrow \quad 
  z < Q_{\zeta_i}(\tilde{x} \mid  \tilde{y}=i)
\]
The direction $\Leftarrow$ in the equivalence follows from the definition of $Q_{\zeta_i}(\tilde{x} \mid  \tilde{y}=i)$:
\[
  z< Q_{\zeta_i}(\tilde{x} \mid  \tilde{y}=i) = \inf\left\{z \ss \Pr{ \tilde{x} \le z \mid  \tilde{y}=i} \ge \zeta_i\right\}
  \; \Rightarrow  \; 
  \Pr{ \tilde{x} <  z \mid \tilde{y}=i} < \zeta_i.
\]
The direction $\Rightarrow$ follows from the definition of VaR (see equation \eqref{eq:var-definition}), which implies that VaR upper-bounds any  $z$ that satisfies the left-hand condition:
\[
  \Pr{ \tilde{x} <  z \mid \tilde{y}=i} < \zeta_i
  \; \Rightarrow\; 
  Q_{\zeta_i}(\tilde{x} \mid  \tilde{y}=i) = \sup\; \left\{z \in \Real \mid \Pr{ \tilde{x} < z \mid \tilde{y}=i} < \zeta_i\right\} \geq z,\]
yet $Q_{\zeta_i}(\tilde{x} \mid  \tilde{y}=i)\neq z$ otherwise since $\Pr{ \tilde{x} < z \mid \tilde{y}=i}$ is right continuous, there must exist some $\epsilon>0$ for which $\Pr{ \tilde{x} < z+\epsilon \mid \tilde{y}=i}<\zeta_i$ hence:
  \[z=\sup\; \left\{z \in \Real \mid \Pr{ \tilde{x} < z \mid \tilde{y}=i} < \zeta_i\right\}\geq z+\epsilon>z,
  \]
which leads to a contradiction. In step (e), we solve for $z$. Finally, we obtain the form in~\eqref{eq:quantile-dec-fixed-fixed}.
\end{proof}

\section{CVaR Suboptimal Policy}
\label{sec:count-with-subopt}


\begin{figure}
  \centering
  \begin{tikzpicture}[->,>=stealth',shorten >=1pt,node distance=1.8cm,semithick,level distance=23mm]
    \tikzstyle{level 1}=[sibling distance=17mm]
    \tikzstyle{level 2}=[sibling distance=9mm]
    \tikzstyle{level 3}=[sibling distance=5mm]
    \node (s0){start} [grow'=right,->]
    child {
      node (s1) {$s_1$}
      child {
        node (s11) {$s_1,a_1$}
        child {
          node[align=right,text width=5ex] (s111) { $-M$ }
          edge from parent node[pos=0.5,fill=white, node font=\tiny] {$0.75$}
        }
        child {
          node[align=right,text width=5ex] (s112) { $M$ }
          edge from parent node[pos=0.5,fill=white,node font=\tiny] {$0.25$}
        }
      }
      child {
        node (s12) {$s_1, a_2$}
        child {
           node[align=right,text width=5ex] (s121) {$0$}
        }
      }
      child {
        node (s13) {$s_1, a_3$}
        child {
          node[align=right,text width=5ex] (s131) {$-100$}
          edge from parent node[pos=0.5,fill=white,node font=\tiny] {$0.5$}
        }
        child {
          node[align=right,text width=5ex] (s131) {$400$}
          edge from parent node[pos=0.5,fill=white,node font=\tiny] {$0.5$}
        }
      }
      edge from parent node[pos=0.5,fill=white, node font=\tiny] {$p_{s_1}$}
    }
    child { node (s2) {$s_2$}
      child {node (s21) {$s_2,a_1$}
        child {
          node[align=right,text width=5ex] (s211) { $200$ }
        }
      }
      edge from parent node[pos=0.5,fill=white, node font=\tiny] {$p_{s_2}$}
    };
\end{tikzpicture}
    \caption{An example used to show the sub-optimality of a policy in \cref{sec:count-with-subopt}.}
    \label{fig:cvar-3actionscounterexample}
\end{figure}
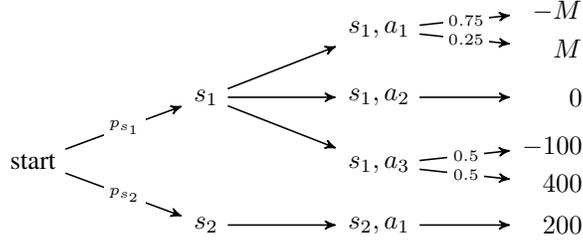

\begin{figure}
    \centering
    \includegraphics[width=0.6\linewidth]{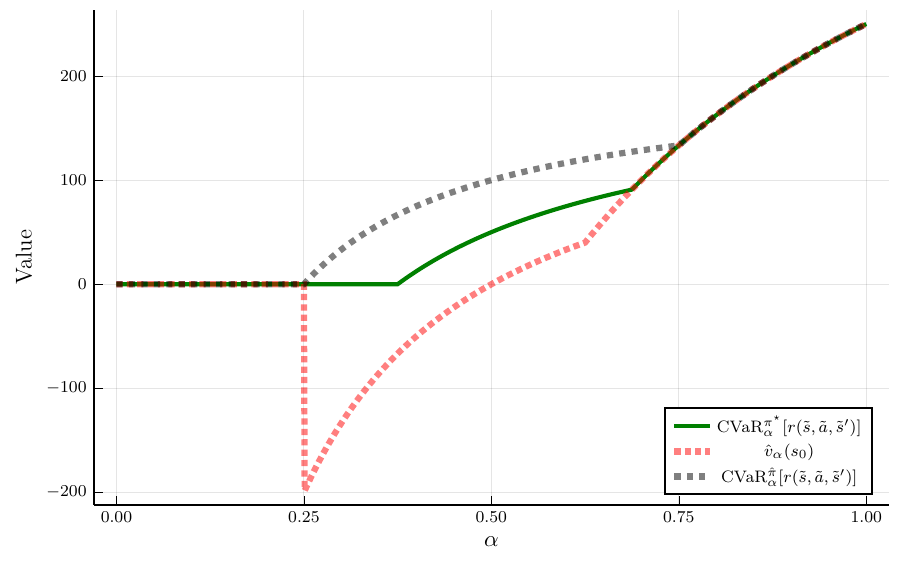}
    \caption{Return computed in \citet{Chow2015} vs optimal CVaR policy for the example in \cref{fig:cvar-3actionscounterexample} with $p_{s_1} = p_{s_2} = 0.5$ and $M = 600$. The optimal CVaR policy for each $\alpha$ is denoted by $\pi\opt$. The value function $\hat{v}$ is computed according to the decomposition in the r.h.s. of~\eqref{eq:cvar-dec-optim} and the corresponding policy is $\hat{\pi}$.}
    \label{fig:cvar-3actionsregret}
\end{figure}

 \begin{figure}
   \centering
     \includegraphics[width=0.6\linewidth]{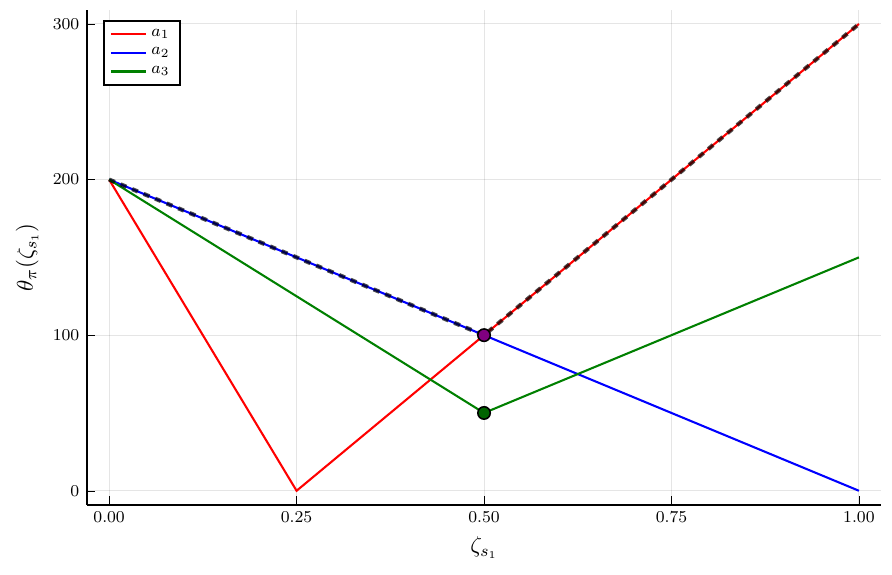}
     \caption{The function $\theta_\pi(\cdot)$ and $\zeta_{s1}$ in CVaR decomposition at $\alpha = 0.5$ for the example in \cref{fig:cvar-3actionscounterexample} with $p_{s_1} = p_{s_2} = 0.5$ and $M = 600$. }
     \label{fig:cvar-zeta-3actions}
 \end{figure}

In this section, we construct a simple MDP to demonstrate that the suboptimality of the CVaR decomposition discussed in \cref{sec:CVaR} can also lead to computing a suboptimal policy. First, we give a particular example in which the policy computed according to \citet{Chow2015} is suboptimal. Then, we show that the sub-optimality gap can be arbitrarily large. 

To demonstrate the suboptimality of the policy computed in \citet{Chow2015}, consider an MDP with two states and three actions and a horizon $T = 1$ as shown in \cref{fig:cvar-3actionscounterexample}. Also let $M = 600$ and $p_{s_1} = p_{s_2} = 0.5$. An optimal solution for the $s_1$ sub-problem in this example is 
\[
\cvaro_{\alpha'}[r(\tilde{s}) \mid \tilde{s} = s_1] 
\quad=\quad 
\begin{cases} 
\cvaro_{\alpha'}[r(s_1, a_2,\tilde{s}')] & \text{if } \alpha' < 0.5 \\
\cvaro_{\alpha'}[r(s_1, a_1,\tilde{s}')] & \text{if } \alpha' \geq 0.5.
\end{cases}
\]

\Cref{fig:cvar-3actionsregret} depicts the sub-optimality demonstrated by the CVaR decomposition. CVaR decomposition for policy optimization would over-approximate (black dash) the value function and commit to a sub-optimal solution (red solid). This simple example answers the following important concerns on the suboptimality.

First, CVaR policy optimization decomposition in \citet{Chow2015} can lead one to choose a suboptimal policy . In particular, \cref{fig:cvar-3actionsregret} shows that the CVaR policy optimization decomposition is an overestimate of the return and the decision maker commits to the worst action for CVaR objective $\alpha \in (0.25,625)$. Furthermore, action $a_3$ is never chosen for \cite{Chow2015} even though it is the only optimal action for $\alpha \in (0.375,0.6875)$. We can see from \cref{fig:cvar-zeta-3actions} when we inspect the $\zeta$ function for $\alpha = 0.5$ that $a_3$ is optimal with CVaR return of $50$. However, the CVaR decomposition would select $a_1$ or $a_2$ and over-estimate the CVaR return to $100$, but the sub-optimal actions $a_1$ or $a_2$ would have the true CVaR return of $0$ instead. 

Second, there is no upper bound on the sub-optimality of the action chosen with respect CVaR return of the optimal action. The regret of the sub-optimal return is dependent on the distribution of the sub-optimal action and unbounded with respect to the true optimal return, as the performance of the sub-optimal return for $\alpha \in (0.25,0.5)$ is worse than the worst case scenario of the best worst-case policy. Increasing the magnitude ($M > 400$) for the reward instead of the sub-optimal action $a_1$ in $s_1$ in \cref{fig:cvar-3actionscounterexample} increases the sub-optimality and can be arbitrarily large. 

Third, the sub-optimality can occur for any $\alpha \in (0,1)$. As above, we use the MDP in \cref{fig:cvar-3actionscounterexample} and perturb two values to demonstrate the suboptimality by (i) increasing the magnitude ($M > 400$) for the reward of the sub-optimal action $s_1,a_1$, or (ii) increasing the initial state probability ($p_{s_2}$). That leads to increasing the range where the CVaR decomposition for policy optimization yields a sub-optimal policy as shown in \cref{fig:cvar-variousinit}.

  \begin{figure}
      \centering
      \includegraphics[width=0.9\linewidth]{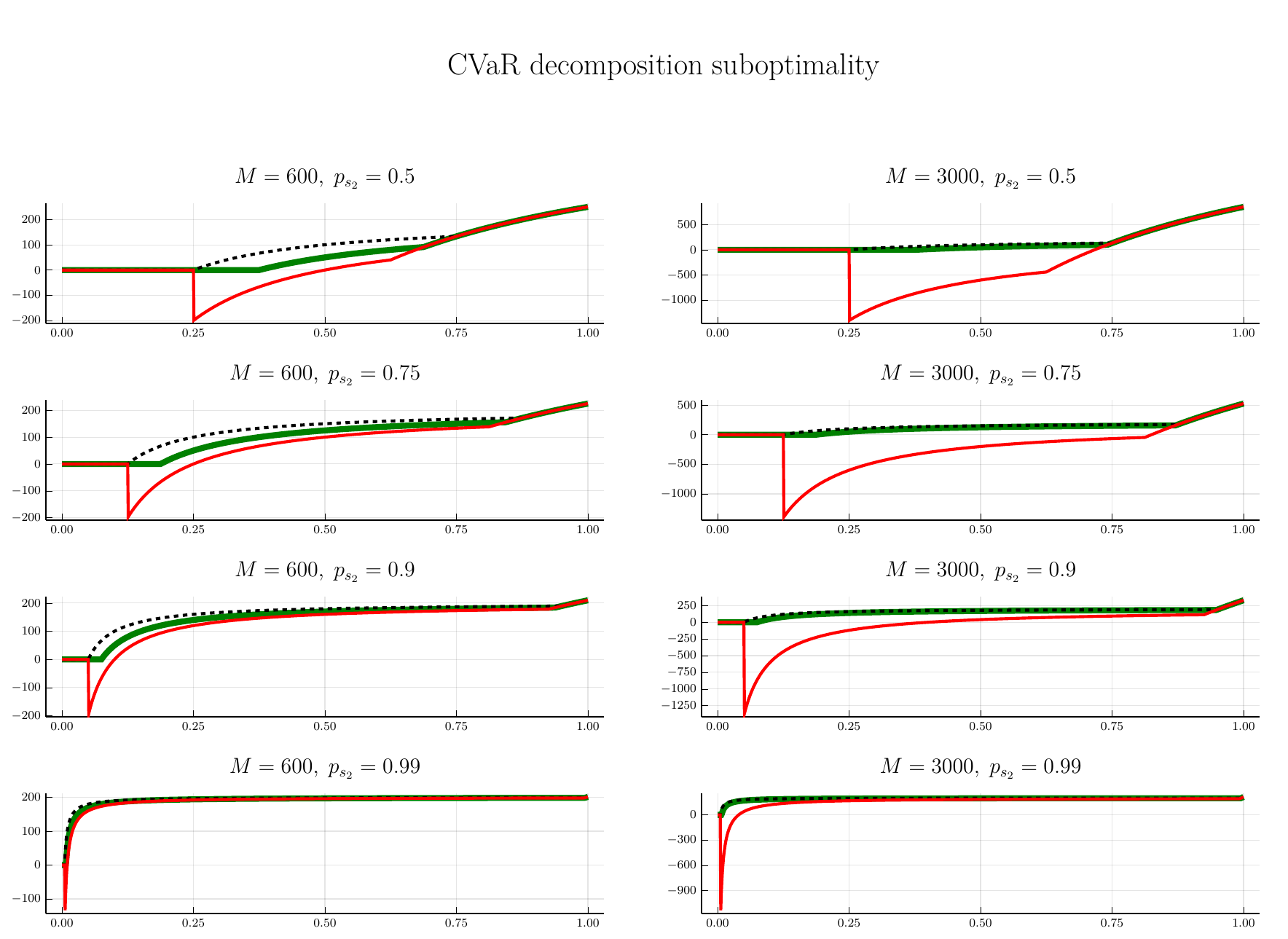}
      \caption{CVaR decomposition suboptimality for different $M$ and $p_{s2}$ for MDP defined in \cref{fig:cvar-3actionscounterexample}}
      \label{fig:cvar-variousinit}
  \end{figure}

\section{Extension to Longer Horizon}\label{app:LongerHorizon}

For completeness, we include in this section an extension of the corrected decomposition presented in \cref{thm:var-decomp-opt} to horizon $T>1$. Our demonstration establishes that the decomposition proposed in Theorem 1 and 3 of \cite{Li2022} actually applies to the upper quantile (i.e. value-at-risk) of the cumulated reward rather than the lower one as claimed by the authors. In comparison with the proof found in \cite{Li2022}, we pay close attention to demonstrating that the supremum in \eqref{thm:var:decomp} is attained when finite and that deterministic policies are optimal.


Given any finite MDP with horizon $T>0$ and $\alpha\in[0,1]$, we define the initial value as
\begin{equation} \label{eq:cvar-val-optimal}
v_0 := 
\max_{\bm{\zeta}\in\probs{S}} \left\{  \min_{s\in\mathcal{S}}\, \max_{a\in \mathcal{A} } q_1(s,a,\alpha \zeta_s \hat{p}_s^{-1})  \mid  \alpha\cdot \bm{\zeta} \leq \bm{\hat{p}}\right\}.
\end{equation}
The state-action value function $q_t\colon \mathcal{S} \times \mathcal{A}\times [0,1] \to \Real$ for the terminal step $t=T+1$ and $\alpha \in [0,1]$ and each $s\in \mathcal{S}$ and $a\in \mathcal{A}$ is defined as
\[
  q_{T+1}(s,a,\alpha) := \begin{cases} \infty & \text{ if } \alpha=1\\ 0 & \text{ otherwise}.
  \end{cases}
\]
For each $t = 1, \dots , T$ and $\alpha < 1$ the state-action function is defined recursively as
\begin{equation}\label{eq:q-definition}
  q_t(s,a,\alpha) :=
  \max_{\bm{\zeta}\in\probs{S}} \left\{  \min_{s'\in\mathcal{S}}\max_{a'\in \mathcal{A} } \, r(s,a,s')+q_{t+1}(s',a',\alpha \zeta_{s'} p_{sas'}^{-1})\mid  \alpha\cdot \bm{\zeta} \leq \bm{p}_{sa}
  \right\}.
\end{equation}
For $\alpha = 1$, the state-action value function is defined as
\[
 q_t(s,a,1) := \infty.
\]
To construct the optimal policy, we also need to define a history-dependent risk level $\bar{\alpha}_t \colon \mathcal{S}^t \times  \mathcal{A}^{t-1} \to [0,1]$ that satisfies for each $t = 1, \dots, T$ and $s_1, \dots, s_t$ and $a_1, \dots, a_{t-1}$ that
\[
  q_t(s,a,\bar{\alpha}_t(s_{1{:}t}, a_{1{:}t-1}))
  =
  \min_{s'\in \mathcal{S}} \max_{a'\in \mathcal{A}}  r(s,a,s') +
  q_{t+1}(s', a', \bar{\alpha}_{t+1}( [s_{1{:}t}, s'], [a_{1{:}t-1}, a] ).
\]
The appropriate values $\bar{\alpha}_t$ can be readily recovered from the optimal solution to \eqref{eq:q-definition}.
Finally, letting 
\begin{equation} \label{eq:cvar-opt-policy}
  a_t^*(s_t,\alpha_t)\in
  \begin{cases} \mathcal{A} & \text{ if } \alpha_t=1\\
    \argmax_{a\in \mathcal{A}} q_t(s_t,a,\alpha_t) & \text{ otherwise },
  \end{cases}.
\end{equation}
we construct a \emph{deterministic} history-dependent policy $\bar{\pi}_t\colon \mathcal{S}^t \times \mathcal{A}^{t-1} \to  \mathcal{A}$ for each $t=1, \dots, T$ and $s_1, \dots, s_t$ and $a_1, \dots , a_{t-1}$
that puts all the probability mass on $a_t^*(s_t,\bar{\alpha}_t(s_{1:t},a_{1:t-1})))$, i.e.
\begin{equation} 
\mathbb{P}^{\tilde{a}_t\sim\bar{\pi}_t(s_{1:t},a_{1:t-1})}(\tilde{a}_t=
   a_t^*(s_t,\bar{\alpha}_t(s_{1:t},a_{1:t-1})))=1.
\end{equation}

The following theorem states that the value function defined above represents the VaR of the returns of the optimal policy. Moreover, the history-dependent policy $\bar{\pi}$ above is optimal and attains the optimal VaR return. 

\begin{theorem} \label{thm:VaRBellmanMS}
For a finite horizon $T$ and any $\alpha \in [0,1]$ we have that $v_0$ defined in \eqref{eq:cvar-val-optimal} coincides with the optimal VaR return:
  \[
    v_0
    \; =\; 
    \max_{\pi\in \Pi} \; \varo_{\alpha}^{\tilde{a}_t\sim\bm{\pi}_t(\tilde{s}_{1:t},\tilde{a}_{1:t-1})}\left[{\sum_{t=1}^T r(\tilde{s}_t, \tilde{a}_t, \tilde{s}_{t+1}})\right]  .
  \]
Moreover, this optimal return is attained by the policy $(\EDmodified{\bar{\pi}}_t)_{t=1}^{T}$ defined in~\eqref{eq:cvar-opt-policy}:
\[
  v_0
  \; =\;
  \varo_{\alpha}^{\tilde{a}_t\sim\bar{\bm{\pi}}_t(\tilde{s}_{1:t},\tilde{a}_{1:t-1})}\left[{\sum_{t=1}^T r(\tilde{s}_t, \tilde{a}_t, \tilde{s}_{t+1}})\right].
\]
\end{theorem}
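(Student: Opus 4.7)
The plan is to prove Theorem~\ref{thm:VaRBellmanMS} by backward induction on the stage $t$, applying \cref{thm:var:decomp} once per stage to unfold the VaR and then invoking the interchangeability argument used in the proof of \cref{thm:var-decomp-opt} to reduce the inner optimization to a per-state maximum. The inductive hypothesis I would carry is that for every $t \in \{1,\ldots,T+1\}$, every $s_t\in\mathcal{S}$, and every $\alpha_t\in[0,1]$,
\[
    \max_{\pi_{t:T}} \varo_{\alpha_t}^{\tilde{a}_\tau\sim\bm{\pi}_\tau}\!\left[\sum_{\tau=t}^{T} r(\tilde{s}_\tau, \tilde{a}_\tau, \tilde{s}_{\tau+1}) \,\middle|\, \tilde{s}_t = s_t\right] \;=\; \max_{a\in\mathcal{A}} q_t(s_t, a, \alpha_t),
\]
with the left-hand maximum ranging over history-dependent randomized tails. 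The base case $t=T+1$ reduces to evaluating the VaR of the constant $0$, which yields $0$ for $\alpha<1$ and $+\infty$ for $\alpha=1$, matching the definition of $q_{T+1}$.

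For the inductive step $t+1 \to t$, I would first consider a deterministic choice $a_t$ and condition on $\tilde{a}_t = a_t$, then apply \cref{thm:var:decomp} with $\tilde{y}=\tilde{s}_{t+1}\sim\bm{p}_{s_t,a_t}$ to push the VaR past the next-state randomness. Cash invariance extracts the deterministic reward $r(s_t,a_t,s_{t+1})$ from each branch, and the remaining conditional VaR at the rescaled risk level $\alpha_t\zeta_{s_{t+1}}/p_{s_t,a_t,s_{t+1}}$ is exactly the tail object to which the inductive hypothesis applies. Because the tail policy $\pi_{t+1:T}$ can be chosen independently for each realization of $s_{t+1}$, the same interchangeability argument used at the end of \cref{thm:var-decomp-opt} permits moving $\max_{\pi_{t+1:T}}$ past $\min_{s_{t+1}}$, converting the inner tail into $\max_{a_{t+1}} q_{t+1}(s_{t+1},a_{t+1},\cdot)$ and reproducing the recursion in \eqref{eq:q-definition} verbatim. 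To drop the determinism restriction on $\bm{\pi}_t$, I would use that the conditional law of $R_t = \sum_{\tau=t}^{T} r(\tilde{s}_\tau, \tilde{a}_\tau, \tilde{s}_{\tau+1})$ under any randomized $\bm{\pi}_t(s_t)$ is a mixture of its laws under deterministic actions, together with the elementary bound that VaR of a mixture is at most the maximum VaR of its components (any $z$ exceeding every component's VaR satisfies $\P{X<z}>\alpha$ in every mixand and therefore in the mixture).

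The first claim of the theorem then follows by applying \cref{thm:var:decomp} one more time with $\tilde{y}=\tilde{s}_1\sim\bm{\hat{p}}$, swapping the two compatible maximizations $\max_{\pi_1}$ and $\sup_{\bm{\zeta}}$ and moving $\max_{\pi_1}$ past $\min_{s_1}$ as in the inductive step, which yields precisely the $v_0$ of \eqref{eq:cvar-val-optimal}. For the attainment claim, I would build $\bar{\alpha}_t$ recursively along the realized trajectory by recording at each transition the maximizing $\bm{\zeta}^*$ of the outer problem in $q_t$ and setting $\bar{\alpha}_{t+1} = \bar{\alpha}_t\,\zeta^*_{s_{t+1}}/p_{s_t,a_t,s_{t+1}}$; the deterministic history-dependent policy $\bar{\pi}_t = a_t^*(s_t,\bar{\alpha}_t)$ then attains each inner maximization by construction, and composing these choices down the recursion reconstructs the claimed VaR value.

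The main obstacle, in my view, is justifying that each outer $\sup_{\bm{\zeta}}$ in the definition of $q_t$ is attained as a maximum, since only then are $\bar{\alpha}_t$ and $\bar{\pi}_t$ well-defined as described. This rests on the finiteness of the MDP and the horizon: the cumulative reward has finite support, so its VaR is piecewise constant (and upper semicontinuous) in the parameters governing the distribution, while the feasible set $\{\bm{\zeta}\in\probs{S}:\alpha\bm{\zeta}\le\bm{p}_{s,a}\}$ is compact. A secondary subtlety is verifying that letting $\bm{\zeta}^*$ be selected at stage $t$ and consumed after observing $\tilde{s}_{t+1}$ yields a bona fide history-dependent rather than anticipating policy, which it does since $\bm{\zeta}^*$ depends only on $(s_t,a_t,\bar{\alpha}_t)$ and $\bar{\alpha}_{t+1}$ is merely read off from the realized $s_{t+1}$.
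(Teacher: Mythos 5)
Your plan for the first equality matches the paper's Step~1 essentially verbatim: backward induction on the identity $\max_{\pi}\varo_{\alpha_t}[\text{tail}\mid s_t]=\max_a q_t(s_t,a,\alpha_t)$, one application of \cref{thm:var:decomp} per stage, cash invariance to extract the stage reward, the interchange of $\max_\pi$ with $\min_{s'}$ since tail policies can be chosen per successor state, mixture quasi-convexity of VaR to reduce to deterministic actions, and upper semicontinuity of $\varo_\alpha[\cdot]$ in $\alpha$ plus compactness of the feasible set to turn the supremum over $\bm{\zeta}$ into an attained maximum. All of these are exactly the ingredients the paper uses, and your elementary justification of the mixture bound is correct.

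The gap is in the attainment claim. You assert that composing the choices $a_t^*(s_t,\bar{\alpha}_t)$ down the recursion ``reconstructs the claimed VaR value by construction,'' but this is precisely what cannot be taken for granted: VaR is not dynamically consistent, so the VaR at level $\alpha$ of the \emph{actual return random variable} generated by $\bar{\pi}$ is not automatically the value obtained by composing the stage-wise optimizations in \eqref{eq:q-definition}. The paper's Step~2 is a separate backward induction showing that $\bar{\hat{q}}_t(s,a,\alpha_t)$ --- the VaR of $\bar{\pi}$'s reward-to-go \emph{conditioned on the event} $\bar{\alpha}_t(\tilde{s}_{1:t},\tilde{a}_{1:t-1})=\alpha_t$ --- is an upper envelope for $\hat{q}_t(s,a,\alpha_t)$. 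The mechanism is one-sided: apply \cref{thm:var:decomp} to $\bar{\pi}$'s return, discard the supremum over $\bm{\zeta}$ by plugging in the specific feasible $\bm{\zeta}^*$ encoded in $\bar{\alpha}_{t+1}=g(\alpha_t,s_t,a_t,s')$, which yields an \emph{inequality} $\bar{\hat{q}}_{t-1}\ge\min_{s'}r+\bar{v}_t(s',\cdot)$, and then invoke the inductive hypothesis $\bar{v}_t\ge\hat{v}_t$. Combining $\varo_\alpha[\text{return of }\bar{\pi}]\ge v_0$ with the already-established $v_0=\max_\pi(\cdot)\ge\varo_\alpha[\text{return of }\bar{\pi}]$ closes the sandwich. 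Your sketch identifies the right objects ($\bm{\zeta}^*$, $\bar{\alpha}_t$, non-anticipativity) but omits this inequality-propagation argument, which is roughly half the proof and is the part that actually needs doing; without it the second display of the theorem is unproved.
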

In proving \cref{thm:VaRBellmanMS}, we will make use of the upper semicontinuity property of $\varo_\alpha[\tilde{x}]$, which is presented in the following Lemma for completeness.
\begin{lemma}\label{thm:rscVaR}
    The function $\varo_\alpha[\tilde{x}]$ is upper semicontinuous in $\alpha$ on the interval $\alpha \in [0,1]$.
\end{lemma}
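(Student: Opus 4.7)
The plan is to reduce upper semicontinuity to right-continuity via the monotonicity of VaR in $\alpha$, and then to verify right-continuity directly from the infimum representation given in~\eqref{eq:var-definition}.

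First I would argue that $\alpha \mapsto \var{\alpha}{\tilde{x}}$ is non-decreasing: the representation $\var{\alpha}{\tilde{x}}=\sup\{z : \P{\tilde{x} < z} \le \alpha\}$ has a feasible set that grows with $\alpha$, so the supremum cannot decrease. For any non-decreasing function on $[0,1]$, $\limsup$ at a point coincides with the right-limit, so $\varo_\alpha$ is upper semicontinuous at $\alpha_0$ if and only if it is right-continuous there. At the right endpoint $\alpha_0=1$ the USC condition is vacuous since $\var{1}{\tilde{x}}=+\infty$, so it suffices to establish right-continuity for every $\alpha_0 \in [0,1)$.

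For this core step, write $F(z) := \P{\tilde{x} \le z}$ and $q_0 := \var{\alpha_0}{\tilde{x}} = \inf\{z : F(z) > \alpha_0\}$. The key observation is that $z > q_0$ implies $F(z) > \alpha_0$: by definition of infimum there exists a sequence $z_k \downarrow q_0$ with $F(z_k) > \alpha_0$, and once $z_k \le z$ the monotonicity of $F$ yields $F(z) \ge F(z_k) > \alpha_0$. Now fix $z > q_0$ and set $\delta := F(z) - \alpha_0 > 0$; then for every $\alpha \in (\alpha_0,\alpha_0+\delta)$ we still have $F(z) > \alpha$, so $z \in \{z' : F(z') > \alpha\}$ and hence $\var{\alpha}{\tilde{x}} \le z$. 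Letting $\alpha \downarrow \alpha_0$ and then $z \downarrow q_0$ gives $\limsup_{\alpha \downarrow \alpha_0} \var{\alpha}{\tilde{x}} \le q_0$, which together with monotonicity is right-continuity and hence USC.

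The main subtlety is not a technical obstacle but a bookkeeping one: one must be careful not to conflate the two representations in~\eqref{eq:var-definition}, since the lower quantile $Q_\alpha = \quant^-_{\tilde{x}}(\alpha)$ is \emph{left}-continuous rather than right-continuous, and the entire distinction emphasized in \cref{sec:VaR} and \cref{thm:quant-decomp-opt} rests on exactly this asymmetry. In the paper's finite outcome setting $F$ is a right-continuous step function and $\var{\alpha}{\tilde{x}}$ is itself a right-continuous step function of $\alpha$ with jumps at the atoms of $F$, so the argument above applies verbatim; no further hypotheses (e.g., atomless distribution) are needed, and the case $\alpha_0 = 0$, where $q_0 = \ess\inf[\tilde{x}]$, is handled by the same derivation since the set $\{z : F(z) > 0\}$ is non-empty.
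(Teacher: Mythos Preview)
Your proposal is correct and follows essentially the same route as the paper: both reduce upper semicontinuity to the pair of properties ``non-decreasing in $\alpha$'' and ``right-continuous in $\alpha$.'' The only difference is that the paper dispatches right-continuity by citing \citet[Lemma~A.19]{Follmer2016}, whereas you supply a self-contained argument via the infimum representation; your derivation is sound and the extra care about the endpoint $\alpha_0=1$ and the distinction from the lower quantile $Q_\alpha$ is appropriate.
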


\begin{proof}
This follows from the fact that 
$\varo_\alpha[\tilde{x}]$ is non-decreasing and right-continuous in terms of $\alpha$ (see Lemma A.19 in \cite{Follmer2016}).
\removed{
We show this by establishing that $\varo_\alpha[\tilde{x}]$ is non-decreasing and right continuous in terms of $\alpha$. Namely, to show the non-decreasing property, we can verify that for all  $\alpha_1\leq \alpha_2$, it is the case that:
\[
  \var{\alpha_1}{\tilde{x}}=\sup\{z \mid \P{\tilde{x}<z}\leq \alpha_1\}\leq \sup\{z \mid \P{\tilde{x}<z}\leq \alpha_2\}=\var{\alpha_2}{\tilde{x}}
\]
    To confirm that $\varo_{\alpha}[\tilde{x}]$ is right continuous, we show that $\bar{x}:=\inf_{\alpha>\alpha_0}\var{\alpha}{\tilde{x}}=\var{\alpha_0}{\tilde{x}}$. In particular, we focus on the part $\bar{x}\leq\var{\alpha_0}{\tilde{x}}$ since $\bar{x}\geq\var{\alpha_0}{\tilde{x}}$ follows from the monotonicity of $\var{\alpha}{\tilde{x}}$. By definition of $\bar{x}$ we have that for $\epsilon>0$ and $\alpha>\alpha_0$, $\var{\alpha}{\tilde{x}}>\bar{x}-\epsilon$. Based on the supremum definition of $\var{\alpha}{\tilde{x}}$, this implies that $\P{\tilde{x}<\bar{x}-\epsilon}\leq \alpha$. By left continuity of $\P{\tilde{x}< x}$, we must have that
    \begin{align*}
        \P{\tilde{x}< \bar{x}-\epsilon}\leq \alpha ,\;\forall \epsilon>0,\alpha>\alpha_0 &\;\Rightarrow\;\sup_{\epsilon>0}\P{\tilde{x}<\bar{x}-\epsilon}\leq \alpha ,\;\forall \alpha>\alpha_0\\
        &\;\Rightarrow\;\P{\tilde{x}< \bar{x}}\leq \alpha,\; \forall \alpha>\alpha_0\\
        &\;\Rightarrow\;\P{\tilde{x}< \bar{x}}\leq \alpha_0\\
        &\;\Rightarrow\;\varo_{\alpha_0}[\tilde{x}]=\sup\{z|\P{\tilde{x}<z}\leq \alpha_0\}\geq \bar{x}.
    \end{align*}
    }
\end{proof}

\begin{proof}[Proof of Theorem \ref{thm:VaRBellmanMS}]
The proof consists of two steps addressing the two equalities stated by the theorem.

\textbf{Step 1:} For all $t=1,\dots,T$, we let 
\begin{gather*}
  \hat{q}_t(s_t,a_t,\alpha_t)
  := \\
  \max_{\pi\in \Pi_{t+1:T}} \; \varo_{\alpha_t}^{\tilde{a}_{t'}\sim\bm{\pi}_{t'}(\tilde{s}_{t+1:t'},\tilde{a}_{t+1:t'-1})}\left[r(s_t,a_t,\tilde{s}_{t+1})+{\sum_{t'=t+1}^T r(\tilde{s}_{t'}, \tilde{a}_{t'}, \tilde{s}_{t'+1}}) \mid  s_t, a_t\right]
\end{gather*}
where $\Pi_{t+1:T}$ refers to the set of all history-dependent policies starting from time $t+1$, and where $\varo_{\alpha}[\tilde{x}|s_t, a_t]$ is short for $\varo_{\alpha}[\tilde{x}|\tilde{s}_t=s_t, \tilde{a}_t=a_t]$. We will first show how $\hat{q}_t(s,a,\alpha)=q_t(s,a,\alpha)$ for all $t=1,\dots,T$, $s\in\mathcal{S}$, $a\in\mathcal{A}$, and $\alpha\in[0,\,1]$. We follow with our conclusion about $v_0$.

First, addressing the case $\alpha=1$, one can easily see that for all $t=1,\dots,T$, $s\in\mathcal{S}$, and $a\in\mathcal{A}$, we have that 
\begin{align*}
  \hat{q}_t(s_t,a_t,1)
  &=
  \max_{\pi\in \Pi_{t+1:T}}  \varo_{1}^{\tilde{a}_{t'}\sim\bm{\pi}_{t'}(\tilde{s}_{t+1:t'},\tilde{a}_{t+1:t'-1})}\left[r(s_t,a_t,\tilde{s}_{t+1})+{\sum_{t'=t+1}^T r(\tilde{s}_{t'}, \tilde{a}_{t'}, \tilde{s}_{t'+1}}) \mid  s_t, a_t\right]\\
  &=\infty \;=\;q_t(s_t,a_t,1)\,,
\end{align*}
based on our definition of value-at-risk for $\alpha=1$.

Next, for $0\leq\alpha<1$ and using $\bm{p}$ short for $\bm{p}_{sa}$ when can start looking at time $t=T$, where we have that for all $s_T\in\mathcal{S}$, and $a_T\in\mathcal{A}$ :
\begin{align*}
\hat{q}_T(s_T,a_T,\alpha_T)&=\varo_{\alpha_T}[r(s_T,a_T,\tilde{s}_{t+1})| s_T, a_T]\\
&=\sup_{\bm{\zeta}\in\probs{S}} \left\{  \min_{s'\in\mathcal{S}}\, \varo_{\alpha_T \zeta_{s'} p_{s'}^{-1}}[r(s_T,a_T,s')]\mid  \alpha_T\cdot \bm{\zeta} \leq \bm{p}\right\}\\
&=\max_{\bm{\zeta}\in\probs{S}} \left\{  \min_{s'\in\mathcal{S}}\, \varo_{\alpha_T \zeta_{s'} p_{s'}^{-1}}[r(s_T,a_T,s')]\mid  \alpha_T\cdot \bm{\zeta} \leq \bm{p}\right\}\\
&=\max_{\bm{\zeta}\in\probs{S}} \left\{  \min_{s'\in\mathcal{S}}\, r(s_T,a_T,s')+\varo_{\alpha_T \zeta_s p_s^{-1}}[0]\mid  \alpha_T\cdot \bm{\zeta} \leq \bm{p}\right\}\\
&=\max_{\bm{\zeta}\in\probs{S}} \left\{  \min_{s'\in\mathcal{S}}\, r(s_T,a_T,s')+\max_{a'\in \mathcal{A}} q_{T+1}(s',a',\alpha_T \zeta_{s'} p_{s'}^{-1})\mid  \alpha_T\cdot \bm{\zeta} \leq \bm{p}\right\}\\
&=q_T(s_T,a_T,\alpha_T),
\end{align*}
where we first use the definition of $\hat{q}_T(s,a,\alpha)$, then the decomposition of $\varo$ in \cref{thm:var:decomp}. We follow with confirming, based on extreme value theory, that the supremum over $\bm{\zeta}$ must be achieved given that $\varo_\alpha(\tilde{x})$ of a random variable $\tilde{x}$ is upper semi-continuous (usc) in $\alpha$ (see Lemma \ref{thm:rscVaR}), that the minimum of a finite set of usc functions is usc, and that $\Delta_S$ is compact. The other two steps follow from the translation invariance of $\varo$ and the definition of $q_{T+1}(s,a,\alpha)$.

Moreover, for all $1\leq t < T$ let $\tilde{h}_{t:t'}:=(\tilde{s}_{t:t'},\tilde{a}_{t:t'-1})$ as the history between $t$ and $t'$. Using the short-hand $\tilde{r}_{t'} = r(\tilde{s}_{t'}, \tilde{a}_{t'}, \tilde{s}_{t'+1})$, and when $0\leq \alpha_t<1$, we have that:
{\allowdisplaybreaks
\begin{align}
&\hat{q}_t(s_t,a_t,\alpha_t)=\max_{\pi\in \Pi_{t+1:T}} \; \varo_{\alpha_t}^{\tilde{a}_{t'}\sim\bm{\pi}_{t'}(\tilde{h}_{t+1:t'})}[r(s_t,a_t,\tilde{s}_{t+1})+{\sum_{t'=t+1}^T r(\tilde{s}_{t'}, \tilde{a}_{t'}, \tilde{s}_{t'+1}})| s_t, a_t]  \notag\\
&=\max_{\pi\in \Pi_{t+1:T}} \; \sup_{\bm{\zeta}\in\probs{S}} \left\{  \min_{s'\in\mathcal{S}}\, \varo_{\alpha_t \zeta_{s'} p_{s'}^{-1}}^{\tilde{a}_{t'}\sim\bm{\pi}_{t'}(\tilde{h}_{t+1:t'})}[r(s_t,a_t,s')+\sum_{t'=t+1}^T \tilde{r}_{t'}\mid  \tilde{s}_{t+1}=s'] \mid  \alpha_t\cdot \bm{\zeta} \leq \bm{p}\right\}\notag\\
&=\max_{\pi\in \Pi_{t+1:T}} \; \max_{\bm{\zeta}\in\probs{S}} \left\{  \min_{s'\in\mathcal{S}}\, \varo_{\alpha_t \zeta_{s'} p_{s'}^{-1}}^{\tilde{a}_{t'}\sim\bm{\pi}_{t'}(\tilde{h}_{t+1:t'})}[r(s_t,a_t,s')+\sum_{t'=t+1}^T \tilde{r}_{t'} \mid  \tilde{s}_{t+1}=s'] \mid  \alpha_t\cdot \bm{\zeta} \leq \bm{p}\right\}\notag\\
%
&= \max_{\bm{\zeta}\in\probs{S}} \left\{ \max_{\pi\in \Pi_{t+1:T}} \; \min_{s'\in\mathcal{S}}\, \varo_{\alpha_t \zeta_{s'} p_{s'}^{-1}}^{\tilde{a}_{t'}\sim\bm{\pi}_{t'}(\tilde{h}_{t+1:t'})}[r(s_t,a_t,s')+\sum_{t'=t+1}^T \tilde{r}_{t'} \mid \tilde{s}_{t+1}=s'] \mid  \alpha_t\cdot \bm{\zeta} \leq \bm{p}\right\}\notag\\
%
&=\max_{\bm{\zeta}\in\probs{S}} \left\{  \min_{s'\in\mathcal{S}}\, \max_{\bm{d}\in\probs{A},\{\pi^a\}_{a\in\mathcal{A}}\in \Pi_{t+2:T}^{|\mathcal{A}|}} \; \varo_{\alpha_t \zeta_{s'}p_{s'}^{-1}}^{\tilde{a}_{t+1}\sim\bm{d},\tilde{a}_{t'}\sim\bm{\pi}_{t'}^{\tilde{a}_{t+1}}(\tilde{h}_{t+2:t'})}[r(s_t,a_t,s')+\right.\notag\\
&\qquad\qquad\qquad\qquad\qquad\qquad\qquad\qquad\qquad\qquad\left.\sum_{t'=t+1}^T \tilde{r}_{t'} \mid  \tilde{s}_{t+1}=s'] \mid  \alpha_t\cdot \bm{\zeta} \leq \bm{p}\right\}\notag\\
&=\max_{\bm{\zeta}\in\probs{S}} \left\{  \min_{s'\in\mathcal{S}}\, r(s_t,a_t,s')+\right.\notag\\
&\left.\max_{\bm{d}\in\probs{A},\{\pi^a\}_{a\in\mathcal{A}}\in \Pi_{t+2:T}^{|\mathcal{A}|}} \; \varo_{\alpha_t \zeta_{s'}p_{s'}^{-1}}^{\tilde{a}_{t+1}\sim\bm{d},\tilde{a}_{t'}\sim\bm{\pi}_{t'}^{\tilde{a}_{t+1}}(\tilde{h}_{t+2:t'})}[\sum_{t'=t+1}^T \tilde{r}_{t'} \mid \tilde{s}_{t+1}=s'] \mid  \alpha_t\cdot \bm{\zeta} \leq \bm{p}\right\}\notag\\
%
%
&=\max_{\bm{\zeta}\in\probs{S}} \left\{  \min_{s'\in\mathcal{S}}\, r(s_t,a_t,s')+ \hat{v}_{t+1}(s',\alpha_t \zeta_{s'} p_{s'}^{-1}) \mid  \alpha_t\cdot \bm{\zeta} \leq \bm{p}\right\}\label{eq:supremumOp}
\end{align}}
where, as before, we start by exploiting the decomposition \cref{thm:var:decomp} and explicating that the supremum is attained. We then change the order of the two maximums, followed by changing the order of $\max_\pi \min_{s'}$ with $\min_{s'} \max_\pi$, which follows based on the interchangeability property of the minimum operation~\cite[proposition 2.2]{Shapiro:interchange}. We finally introduced a $(s,\alpha)$-value function operator which can be reduced as follows:
\begin{align}
\hat{v}_t&(s,\alpha):=\max_{\bm{d}\in\probs{A},\{\pi^a\}_{a\in\mathcal{A}}\in \Pi_{t+1:T}^{|\mathcal{A}|}}  \varo_{\alpha }^{\tilde{a}_{t}\sim\bm{d},\tilde{a}_{t'}\sim\bm{\pi}_{t'}^{\tilde{a}_t}(\tilde{h}_{t+1:t})}[{\sum_{t'=t}^T r(\tilde{s}_{t'}, \tilde{a}_{t'}, \tilde{s}_{t'+1}})|\tilde{s}_{t}=s]   \notag\\
&=\max_{\{\pi^a\}_{a\in\mathcal{A}}\in \Pi_{t+1:T}^{|\mathcal{A}|}} \max_{\bm{d}\in\probs{A}} \; \varo_{\alpha }^{\tilde{a}_{t}\sim\bm{d},\tilde{a}_{t'}\sim\bm{\pi}_{t'}^{\tilde{a}_t}(\tilde{h}_{t+1:t})}[{\sum_{t'=t}^T r(\tilde{s}_{t'}, \tilde{a}_{t'}, \tilde{s}_{t'+1}})|\tilde{s}_{t}=s'] \notag\\
&=\max_{\{\pi^a\}_{a\in\mathcal{A}}\in \Pi_{t+1:T}^{|\mathcal{A}|}} \max_{a\in\mathcal{A}} \; \varo_{\alpha }^{\tilde{a}_{t'}\sim\bm{\pi}_{t'}^{a}(\tilde{h}_{t+1:t})}[{r(s,a, \tilde{s}_{t+1})+\sum_{t'=t+1}^T r(\tilde{s}_{t'}, \tilde{a}_{t'}, \tilde{s}_{t'+1}})|\tilde{s}_{t}=s,\tilde{a}_{t}=a] \notag\\
&=\max_{a \in \mathcal{A}} \max_{\pi\in \Pi_{t+1:T}} \; \varo_{\alpha }^{\tilde{a}_{t'}\sim\bm{\pi}_{t'}(\tilde{h}_{t+1:t'})}[{r(s,a, \tilde{s}_{t+1})+\sum_{t'=t+1}^T r(\tilde{s}_{t'}, \tilde{a}_{t'}, \tilde{s}_{t'+1}})|\tilde{s}_{t}=s,\tilde{a}_{t}=a]\notag\\
&=\max_{a \in \mathcal{A}} \hat{q}_t(s,a,\alpha),\label{eq:auiog}
\end{align}
where we exploited the fact that value-at-risk is a mixture quasi-convex function~\cite{Delage2019}, meaning that it cannot be improved by a randomized policy.

Hence, replacing $\hat{v}$ back in equation \eqref{eq:supremumOp}, we get
\begin{align*}
\hat{q}_t(s_t,a_t,\alpha_t)&=\max_{\bm{\zeta}\in\probs{S}} \left\{  \min_{s'\in\mathcal{S}}\, r(s_t,a_t,s')+ \max_{a \in \mathcal{A}} \hat{q}_{t+1}(s',\alpha_t \zeta_{s'} p_{s'}^{-1}) \mid  \alpha_t\cdot \bm{\zeta} \leq \bm{p}\right\}\\
&=\max_{\bm{\zeta}\in\probs{S}} \left\{  \min_{s'\in\mathcal{S}}\, r(s_t,a_t,s')+ \max_{a \in \mathcal{A}} q_{t+1}(s',\alpha_t \zeta_{s'} p_{s'}^{-1}) \mid  \alpha_t\cdot \bm{\zeta} \leq \bm{p}\right\}\\
&=q_t(s_t,a_t,\alpha_t) .   
\end{align*}
Finalizing our conclusion, we get using the short-hand $\tilde{r}_t = r(\tilde{s}_t, \tilde{a}_t, \tilde{s}_{t+1})$ that
{\allowdisplaybreaks
\begin{align*}
\max_{\pi\in \Pi}& \; \varo_{\alpha}^{\tilde{a}_t\sim\bm{\pi}_t(\tilde{h}_{1:t})}\left[{\sum_{t=1}^T \tilde{r}_t}\right]    = \\
&=\max_{\pi\in \Pi} \sup_{\bm{\zeta}\in\probs{S}} \left\{  \min_{s'\in\mathcal{S}}\, \varo_{\alpha \zeta_{s'} \hat{p}_{s'}^{-1}}^{\tilde{a}_{t}\sim\bm{\pi}_{t}(\tilde{h}_{1:t})}\left[\sum_{t=1}^T \tilde{r}_t \mid \tilde{s}_{1}=s'\right] \mid  \alpha\cdot \bm{\zeta} \leq \bm{\hat{p}}\right\}\\
%
&=\sup_{\bm{\zeta}\in\probs{S}} \left\{  \min_{s'\in\mathcal{S}}\max_{\bm{d}\in\probs{A},\{\pi^a\}_{a\in\mathcal{A}}\in \Pi_{2:T}^{|\mathcal{A}|}}\, \varo_{\alpha \zeta_{s'} \hat{p}_{s'}^{-1}}^{\tilde{a}_1\sim\bm{d},\tilde{a}_{t}\sim\bm{\pi}_{t}^{\tilde{a}_1}(\tilde{h}_{2:t})}\left[\sum_{t=1}^T \tilde{r}_t \mid \tilde{s}_{1}=s'\right] \mid  \alpha\cdot \bm{\zeta} \leq \bm{\hat{p}}\right\}\\
&=\sup_{\bm{\zeta}\in\probs{S}} \left\{  \min_{s'\in\mathcal{S}}\max_{a\in\mathcal{A},\pi\in \Pi_{2:T}}\, \varo_{\alpha \zeta_{s'} \hat{p}_{s'}^{-1}}^{\tilde{a}_{t}\sim\bm{\pi}_{t}(\tilde{h}_{2:t})}\left[\sum_{t=1}^T \tilde{r}_t \mid \tilde{s}_1=s'\right] \mid  \alpha\cdot \bm{\zeta} \leq \bm{\hat{p}}\right\}\\
&=\sup_{\bm{\zeta}\in\probs{S}} \left\{  \min_{s'\in\mathcal{S}}\max_{a\in\mathcal{A}} \hat{q}_1(s',a,\alpha \zeta_{s'} \hat{p}_{s'}^{-1}) \mid \tilde{s}_{1}=s'] \mid  \alpha\cdot \bm{\zeta} \leq \bm{\hat{p}}\right\}\\
&=\sup_{\bm{\zeta}\in\probs{S}} \left[  \min_{s'\in\mathcal{S}}\max_{a\in\mathcal{A}} q_1(s',a,\alpha \zeta_{s'} \hat{p}_{s'}^{-1}) \mid \tilde{s}_{1}=s'] \mid  \alpha\cdot \bm{\zeta} \leq \bm{\hat{p}}\right]\\
&= v_0\,,
\end{align*}}
where we employ the same  steps as for reducing $\hat{q}_t$ to $q_t$.

\textbf{Step 2:} Regarding the optimality of the proposed policy, we will show inductively that 
\begin{gather*}
\bar{\hat{q}}_t(s_t,a_t,\alpha_t) := \\ \varo_{\alpha_t}^{\tilde{a}_{t'}\sim\bar{\bm{\pi}}_{t'}(\tilde{h}_{1:t'})}\left[r(s_t,a_t,\tilde{s}_{t+1})+{\sum_{t'=t+1}^T r(\tilde{s}_{t'}, \tilde{a}_{t'}, \tilde{s}_{t'+1}})\mid  s_t, a_t,\bar{\alpha}_t(\tilde{s}_{1:t},\tilde{a}_{1:t-1})=\alpha_t\right]
\end{gather*}
is an upper envelope for $\hat{q}_t(s_t,a_t,\alpha_t)$ for all $1\leq t\leq T$. Conditioning on $\alpha_t$ in the definition above is necessary because the policy $\hat{\bm{\pi}}$ depends on the risk level $\alpha$. This can then be exploited to obtain the following inequality:
\begin{align*}
\varo&_{\alpha}^{\tilde{a}_{t}\sim\bar{\bm{\pi}}_{t}(\tilde{h}_{1:t})}[{\sum_{t=1}^T r(\tilde{s}_{t}, \tilde{a}_{t}, \tilde{s}_{t+1}})] = \\
&=  \sup_{\bm{\zeta}\in\probs{S}} \left\{  \min_{s\in\mathcal{S}}\, \varo_{\alpha \zeta_{s} \hat{p}_{s}^{-1}}^{\tilde{a}_{t}\sim\bm{\bar{\pi}}_{t}(\tilde{h}_{1:t})}[\sum_{t=1}^T r(\tilde{s}_t, \tilde{a}_t, \tilde{s}_{t+1})|\tilde{s}_{1}=s] \mid  \alpha\cdot \bm{\zeta} \leq \bm{\hat{p}}\right\}\\
&=  \sup_{\bm{\zeta}\in\probs{S}} \left\{  \min_{s\in\mathcal{S}}\, \varo_{\alpha \zeta_{s} \hat{p}_{s}^{-1}}^{\tilde{a}_{t}\sim\bm{\bar{\pi}}_{t}(\tilde{h}_{1:t})}[\sum_{t=1}^T r(\tilde{s}_t, \tilde{a}_t, \tilde{s}_{t+1})|\tilde{s}_{1}=s,\tilde{a}_1=a_1^*(s,\bar{\alpha}_1(s))] \mid  \alpha\cdot \bm{\zeta} \leq \bm{\hat{p}}\right\}\\
&\geq   \min_{s\in\mathcal{S}}\, \varo_{\bar{\alpha}_1(s)}^{\tilde{a}_{t}\sim\bm{\bar{\pi}}_{t}(\tilde{h}_{1:t})}[\sum_{t=1}^T r(\tilde{s}_t, \tilde{a}_t, \tilde{s}_{t+1})|\tilde{s}_{1}=s,\tilde{a}_1=a_1^*(s,\bar{\alpha}_1(s)),\bar{\alpha}_1(\tilde{s}_1)=\bar{\alpha}_1(s)] \\
&=  \min_{s\in\mathcal{S}}\, \bar{\hat{q}}_1(s,a_1^*(s,\bar{\alpha}_1(s)),\bar{\alpha}_1(s)) \;\geq\;  \min_{s\in\mathcal{S}}\, \hat{q}_1(s,a_1^*(s,\bar{\alpha}_1(s)),\bar{\alpha}_1(s)) \\
&=\min_{s\in\mathcal{S}}\, q_1(s,a_1^*(s,\bar{\alpha}_1(s)),\bar{\alpha}_1(s)) \;=\;\min_{s\in\mathcal{S}}\max_{a\in\mathcal{A}}\, q_1(s,a,\bar{\alpha}_1(s)) \\
&=\max_{\bm{\zeta}\in\probs{S}}\left\{  \min_{s\in\mathcal{S}}\max_{a\in\mathcal{A}}\, q_1(s,a,\bar{\alpha}_1(s))\mid  \alpha\cdot \bm{\zeta} \leq \bm{\hat{p}}\right\} =v_0\\
&=\max_{\pi\in\Pi}\varo_{\alpha}^{\tilde{a}_t\sim\bm{\pi}_t(\tilde{h}_{1:t})}\left[{\sum_{t=1}^T r(\tilde{s}_t, \tilde{a}_t, \tilde{s}_{t+1}})\right]\\
&\geq \varo_{\alpha}^{\tilde{a}_{t}\sim\bar{\bm{\pi}}_{t}(\tilde{h}_{1:t})}\left[{\sum_{t=1}^T r(\tilde{s}_{t}, \tilde{a}_{t}, \tilde{s}_{t+1}})\right],
\end{align*}
where the first step comes from the decomposition of $\varo$, the second from the definition of $\bar{\pi}_1$. The third step follows from the feasibility of $\zeta_s:=\bar{\alpha}_1(s)/(\alpha\hat{p}_s^{-1})$. We then exploit the definition of $\bar{\hat{q}}_1$, the fact that it upper bounds $\hat{q}_1$, the equivalence of $\hat{q}_1$ and $q_1$, and definition of $a_1^*$. On the sixth line, we exploit the definition of $\bar{\alpha}_1(s)$, followed with the definition of $v_0$. These steps would therefore confirm that $\bar{\pi}$ is optimal.

We are left with showing that $\bar{\hat{q}}_t$ upper bounds $\hat{q}_t$ for all $t=1,\dots,T$. Specifically, starting with $t=T$, we have:
\begin{equation}
\bar{\hat{q}}_T(s_T,a_T,\alpha_T) = \varo_{\alpha_T}[r(s_T,a_T,\tilde{s}_{T+1})| s_T, a_T]=\hat{q}_T(s_T,a_T,\alpha_T)    \label{eq:jklg}
\end{equation}

Then, for $t<T$ we focus on the case $\alpha_t<1$, since otherwise we have $\infty\geq\infty$. Specifically, we first define
\begin{align*}
    \bar{v}_t(s_t,\alpha_t) &:=\varo_{\alpha_t}^{\tilde{a}_{t'}\sim\bar{\bm{\pi}}_{t'}(\tilde{h}_{1:t'})}[r(s_t,\tilde{a}_t,\tilde{s}_{t+1})+{\sum_{t'=t+1}^T r(\tilde{s}_{t'}, \tilde{a}_{t'}, \tilde{s}_{t'+1}})| s_t,\bar{\alpha}_t(\tilde{s}_{1:t},\tilde{a}_{1:t-1})=\alpha_t],
\end{align*}
which captures the value-at-risk at level $\alpha_t$ of the reward-to-go looking forward from time $t$ when running  $\bar{\pi}$ given that the history up to time $t$ satisfies $\bar{\alpha}_t(\tilde{s}_{1:t},\tilde{a}_{1:t-1})=\alpha_t$.

We then take inductive steps starting from $t=T$ down to $t=2$. At each $t$, we can verify that if $\bar{\hat{q}}_{t}$ is an upper envelope for $\hat{q}_{t}$, then :
\[    \bar{v}_{t}(s_t,1)=\infty\geq\infty=\max_{a\in\mathcal{A}}\hat{q}_t(s_t,a,1)=\hat{v}_t(s_t,1)\]
and that for $0\leq \alpha_t<1$:
\begin{align*}
\bar{v}_t(s_t,\alpha_t) 
  &=\varo_{\alpha_t}^{\tilde{a}_{t'}\sim\bar{\bm{\pi}}_{t'}(\tilde{h}_{1:t'})}\Bigl[r(s_t,\tilde{a}_t,\tilde{s}_{t+1}) \\
   &\qquad \qquad +{\sum_{t'=t+1}^T r(\tilde{s}_{t'}, \tilde{a}_{t'}, \tilde{s}_{t'+1}}) \mid  s_t,\tilde{a}_t=a_t^*(s_t,\alpha_t),\bar{\alpha}_t(\tilde{s}_{1:t},\tilde{a}_{1:t-1})=\alpha_t\Bigr]\\
    &= \bar{\hat{q}}_t(s_t,a_t^*(s_t,\alpha_t),\alpha_t)  
    \;\geq\; \hat{q}_t(s_t,a_t^*(s_t,\alpha_t),\alpha_t)  \;=\;  q_t(s_t,a_t^*(s_t,\alpha_t),\alpha_t) \\& = \max_{a\in\mathcal{A}} q_t(s_t,a,\alpha_t) = \max_{a\in\mathcal{A}} \hat{q}_t(s_t,a,\alpha_t) \;=\; \hat{v}_t(s_t,\alpha_t)  ,
\end{align*}
where we first used the definition of $\bar{\pi}$, then used the definition of $\bar{\hat{q}}_T$, followed with the assumed property that $\bar{\hat{q}}_t$ is an upper envelope for $\hat{q}_t$. We then employ the equivalence of $\hat{q}_t$ and $q_t$ twice, the definition of $a_t^*$, and the relationship between $\hat{v}_t$ and $\hat{q}_t$ established in \eqref{eq:auiog}.

Next, we confirm that if $\bar{v}_t$ is an upper envelope for $\hat{v}_t$, then taking one step back and using the short-hand $\tilde{r}_{t'} = r(\tilde{s}_{t'}, \tilde{a}_{t'}, \tilde{s}_{t'+1})$ we get:
{\allowdisplaybreaks
\begin{align*}
    &\bar{\hat{q}}_{t-1}(s_{t-1},a_{t-1},\alpha_{t-1}) := \\
    &\varo_{\alpha_{t-1}}^{\tilde{a}_{t'}\sim\bar{\bm{\pi}}_{t'}(\tilde{h}_{1:t'})}[r(s_{t-1},a_{t-1},\tilde{s}_{t})+\sum_{t'=t}^T \tilde{r}_{t'}\mid  s_{t-1}, a_{t-1},\bar{\alpha}_{t-1}(\tilde{s}_{1:t-1},\tilde{a}_{1:t-2})=\alpha_{t-1}]\\
&=   \sup_{\bm{\zeta}\in\probs{S}} \left\{  \min_{s'\in\mathcal{S}}\, r(s_{t-1},a_{t-1},s')+\right.\\
&\;\left.\varo_{\alpha_{t-1} \zeta_{s'} \bm{p}_{s'}^{-1}}^{\tilde{a}_{t'}\sim\bar{\bm{\pi}}_{t'}(\tilde{h}_{1:t'})}[ \sum_{t'=t}^T \tilde{r}_{t'} \mid  \tilde{s}_t=s',a_{t-1},\bar{\alpha}_{t-1}(\tilde{s}_{1:t-1},\tilde{a}_{1:t-2})=\alpha_{t-1}]  \mid  \alpha_{t-1}\cdot \bm{\zeta} \leq \bm{p}\right\} \\
&=   \sup_{\bm{\zeta}\in\probs{S}} \left\{  \min_{s'\in\mathcal{S}}\, r(s_{t-1},a_{t-1},s')+\right.\\
&\;\left.\varo_{\alpha_{t-1} \zeta_{s'} p_{s'}^{-1}}^{\tilde{a}_{t'}\sim\bar{\bm{\pi}}_{t'}(\tilde{h}_{1:t'})}[ \sum_{t'=t}^T \tilde{r}_{t'} \mid  \tilde{s}_t=s',\bar{\alpha}_t(\tilde{s}_{1:t},\tilde{a}_{1:t-1})=g(\alpha_{t-1},s_{t-1},a_{t-1},s')]  \mid  \alpha_{t-1}\cdot \bm{\zeta} \leq \bm{p}\right\} \\
&\geq      \min_{s'\in\mathcal{S}}\, r(s_{t-1},a_{t-1},s')+\\&\;\varo_{g(\alpha_{t-1},s_{t-1},a_{t-1},s')}^{\tilde{a}_{t'}\sim\bar{\bm{\pi}}_{t'}(\tilde{h}_{1:t'})}[ \sum_{t'=t}^T \tilde{r}_{t'} \mid  s_t=s',\bar{\alpha}_t(\tilde{s}_{1:t},\tilde{a}_{1:t-1})=g(\alpha_{t-1},s_{t-1},a_{t-1},s')]\\
%
&=      \min_{s'\in\mathcal{S}}\, r(s_{t-1},a_{t-1},s')+\bar{v}_t(s',g(\alpha_{t-1},s_{t-1},a_{t-1},s')) \\
&\geq      \min_{s'\in\mathcal{S}}\, r(s_{t-1},a_{t-1},s')+\hat{v}_t(s',g(\alpha_{t-1},s_{t-1},a_{t-1},s')) \\
&=      \min_{s'\in\mathcal{S}}\, r(s_{t-1},a_{t-1},s')+\max_{a\in\mathcal{A}}\hat{q}_t(s',a,g(\alpha_{t-1},s_{t-1},a_{t-1},s')) \\
&=      \min_{s'\in\mathcal{S}}\, r(s_{t-1},a_{t-1},s')+\max_{a\in\mathcal{A}}q_t(s',a,g(\alpha_{t-1},s_{t-1},a_{t-1},s')) \\
&= \sup_{\bm{\zeta}\in\probs{S}} \left\{\min_{s'\in\mathcal{S}}\, r(s_{t-1},a_{t-1},s')+\max_{a\in\mathcal{A}}q_t(s',a,\alpha_{t-1} \zeta_{s'} p_{s'}^{-1}) \mid \alpha_{t-1}\cdot \bm{\zeta} \leq \bm{p}\right\}\\
&= \sup_{\bm{\zeta}\in\probs{S}} \left\{\min_{s'\in\mathcal{S}}\, r(s_{t-1},a_{t-1},s')+\hat{v}_t(s',\alpha_{t-1} \zeta_{s'} p_{s'}^{-1}) \mid \alpha_{t-1}\cdot \bm{\zeta} \leq \bm{p}\right\}\\
&=\hat{q}_{t-1}(s_{t-1},a_{t-1},\alpha_{t-1}) .
\end{align*}}
where
\[g(\alpha,s,a,s'):=\alpha p_{s,a,s'}^{-1}\cdot\left[\arg\max_{\bm{\zeta}\in\probs{S}:\alpha\cdot\bm{\zeta}\leq \bm{p}_{s,a}}\min_{s''\in S} r(s,a,s'')+\max_{a\in \mathcal{A}} q_t(s,a,\alpha\zeta_{s''} p_{s,a,s''}^{-1})\right]_{s'} \]
The first step comes from the decomposition of $\varo$, the second from the fact that $\bar{\alpha}_{t}$ is known given that $\bar{\alpha}_{t-1}$, $\tilde{s}_{t}$, and $\tilde{a}_{t-1}$ are fixed while $\sum_{t'=t}^T r(\tilde{s}_{t'}, \tilde{a}_{t'}, \tilde{s}_{t'+1})$ only depends on $\tilde{s}_{t}$ and $\bar{\alpha}_{t}$. The third step follows from replacing the supremum over $\bm{\zeta}$ with a feasible member of $\Delta_S$. The fourth step follows from the definition of $\bar{v}_{t}$, followed with the fact that it upper bounds $\hat{v}_{t}$. Finally, we exploit the definition of $g(\cdot)$ and of $\hat{q}_{t-1}(\cdot)$. 
This completes the inductive proof demonstrating that $\bar{\hat{q}}_t(\cdot)$ is an upper envelope for $\hat{q}_t(\cdot)$ for all $1\leq t\leq T$.

\end{proof}

\end{document}